\DeclareMathAlphabet\mathbfcal{OMS}{cmsy}{b}{n}
\DeclareMathOperator*{\esssup}{ess\,sup}
\DeclareMathOperator*{\essinf}{ess\,inf}
\renewcommand{\Re}{\text{Re }}
\renewcommand{\Im}{\text{Im }}
\newtheorem{lemma}{Lemma}
\newtheorem{definition}{Definition}
\newtheorem{proposition}{Proposition}
\newtheorem{remark}{Remark}
\newtheorem{corollary}{Corollary}
\newtheorem{theorem}{Theorem}
\numberwithin{equation}{section}
\numberwithin{figure}{section}
\numberwithin{theorem}{section}
\numberwithin{lemma}{section}
\numberwithin{proposition}{section}
\numberwithin{corollary}{section}
\numberwithin{remark}{section}
\numberwithin{definition}{section}
\newcommand{\JN} [1]{{\color{black} #1}}
\newenvironment{numberedproof}[2][Proof]{\noindent \emph{#1 #2} }{\hfill \qed}
\newcommand{\wu}{\widehat{u}}
\newcommand{\wv}{\widehat{v}}
\newcommand{\VK}{\widehat{V}_K}
\newcommand{\VhK}{\widehat{V}_{h,K}}
\newcommand{\wuK}{\widehat{u}^K}
\newcommand{\uK}{u^K}
\newcommand{\wvK}{\widehat{v}^K}
\newcommand{\vK}{v^K}
\title[Floquet--Bloch analysis for wave propagation]{Floquet--Bloch Analysis of Wave Propagation with Time-Periodic Coefficients}
\author{J\"org Nick, Ralf Hiptmair and Habib Ammari}
\address{
	Seminar for Applied Mathematics,
	ETH Zürich, Rämistrasse 101, CH-8092 Zürich, Switzerland %  \\
}
\email{joerg.nick@sam.math.ethz.ch}
\email{ralf.hiptmair@sam.math.ethz.ch}
\email{habib.ammari@sam.math.ethz.ch}
\newcommand{\rhc}[1]{{\color{magenta}{\bf R.H.}: #1}}
\newcommand{\drop}[1]{}
\begin{document}
	
	\maketitle
	
	\begin{abstract}
		This paper presents a numerical investigation of acoustic wave propagation in an obstacle with periodically time-modulated material parameters. We focus on the numerical construction of Floquet--Bloch solutions, which are quasi-periodic kernel elements of the hyperbolic operator appearing on the left-hand side of the acoustic wave equation. Using the temporal Fourier expansion yields a system of coupled harmonics, which can be truncated. Rewriting this system then provides different (generally nonlinear) eigenvalue formulations for discretized Floquet--Bloch solutions. Deriving energy estimates and the necessary conditions for Riesz--Schauder theory show basic properties of the occurring Floquet exponents. 
		To derive fully discrete schemes, we employ a general Galerkin space discretization. Under assumptions on the relation of the temporal Fourier truncation and the Galerkin space discretization, we prove that the approximated Floquet exponents exhibit the same limitations as their continuous counterparts. Moreover, the approximated modes are shown to satisfy the defining properties of Floquet--Bloch solutions, with a defect that tends to zero as the number of harmonics approaches infinity. 	Numerical experiments demonstrate the effectiveness of the proposed approach and illustrate the theoretical findings.

		%Employing a space discretization to the system of coupled harmonics filters spatially oscillating modes, which is the key to subsequent theory. Using this effect, 
		
		%shown to imply a localization result for the temporal spectrum of the fully discrete coupled harmonics. Such a localization result is the key to further analysis, since the truncation of the coupled harmonics is critically affected for non-localized modes. 
		%	We use the localization result to show that, when enough harmonics are included, the approximated Floquet exponents exhibit the same limitations as their continuous counterparts. Moreover, the approximated modes are shown to satisfy the defining properties of Floquet--Bloch modes, with a defect that tends to zero as the number of harmonics approaches infinity.
		%	
		
	\end{abstract}
	\section{Problem formulation}
	
	\subsection{Motivation} Time-modulated metamaterials \cite{CD20,G22,YGA22} have attracted a lot of attention in recent years. Such micro-structured materials exhibit unusual properties such as double-near zero effective material parameters \cite{AH21}, non-reciprocity \cite{C19,ACH22}, signal amplification \cite{G19}, and frequency conversion \cite{WG20}. These properties are due to the high contrast of their constituents and the occurrence of resonance phenomena at subwavelength scales, i.e., scales that are much smaller than the operating wavelength. The mathematical analysis of time-modulated metamaterials has been initiated in recent papers \cite{AH21, ACHR23,ACHR24,Thea1,Thea2,HSW23}. In the present manuscript, we consider a general class of time-modulated scatterers and do not restrict ourselves to the high-contrast regime. 
	We consider a single connected domain with appropriate boundary conditions, where the material parameters inside the domain are time-periodic and variable in space. Floquet-Bloch solutions, which are (temporally) quasi-periodic acoustic waves in the time-periodic medium, give insight into solutions of the corresponding initial-value problem. The study of these special solutions and their numerical approximation is the overall objective of this manuscript.
	
	\subsection*{The contribution of this manuscript} Coupling a range of harmonics is a natural and classical approach to treating dynamical systems with periodic coefficients \cite{S78,W90,WH90,WB19,YXB19}. With the advent of time-dependent metamaterials, partial differential equations with time-periodic coefficients have become an active field of research. Coupled harmonics are a natural tool for approaching these wave propagation problems and understanding resonance phenomena, although a rigorous mathematical understanding is lacking in the current literature (see \cite{G22,HD24}). In this work, we provide the first results for coupled harmonics in the context of the acoustic wave equation. Here, Floquet--Bloch solutions are only known to exist for the spatially discretized system. The present paper shows that the system of coupled harmonics, formulated for the spatially discrete acoustic wave equation, produces fully discrete quasi-periodic functions that fulfill the spatially discrete acoustic wave equation up to a residuum that tends to zero. Moreover, key properties of the continuous system, like folding and upper bounds on the energy growth, are shown to be present in the discrete system. With these results, we provide the first steps towards a complete numerical analysis of coupled harmonics for the acoustic wave equation with time-periodic coefficients.
	
	\subsection{Initial value problems with time-modulated coefficients}
	In order to describe the problem setting in the present manuscript, we let $D\subset \mathbb R^{d}$ be a bounded Lipschitz domain and consider the scalar wave equation with time-dependent material parameters
	%	\begin{align}\label{eq:time-varying-acoustic}
		%	 \frac{\partial^2}{\partial t^2}  u(x,t)- \nabla \cdot \kappa(x,t)\nabla u(x,t)  = f(x,t), \quad x\in D, t \ge 0 . 
		%	\end{align}
	\begin{align}\label{eq:time-varying-acoustic}
		\partial_t^2  u(x,t)+ A(x,t) u(x,t)  = f(x,t), \quad x\in D, t \ge 0 . 
	\end{align}
	The time-periodic and spatially elliptic operator reads
	$$A(x,t) u = -\nabla \cdot \left( \kappa(x,t)\nabla u \right), \quad\text{with} \quad \kappa(x,t)=\frac{c^2}{n^2(x,t)}, $$
	where $c$ denotes the wave speed in the background medium and $n(x,t)$ the refractive index. Throughout this manuscript, we assume that $\kappa(x,t)$ is regular in time, $T$-periodic for some $T>0$ and is positive (see assumption \eqref{assumpt-kappa}).
	
	%Further let $\uinc$ be the incident field, satisfying the homogeneous acoustic wave equation $(\partial^2/\partial t^2 - c^2 \Delta)\uinc(x,t) = 0$ in the full space $\mathbb R^d$. In the following, we omit the function parameters $ x\in \mathbb R^3$ and $t \ge 0$.
	
	The model problem is completed with one of the two following boundary conditions:
	Firstly, we consider absorbing boundary conditions, namely 
	\begin{align}\label{eq:Absorbing}
		\kappa 	\partial_\nu u = \kappa_0\partial_t u  \quad \text{on} \quad \Gamma=\partial D,
	\end{align}
	for some $\kappa_0>0$, where $\partial_\nu$ denotes the outward normal derivative.
	In the one-dimensional setting, this boundary condition is perfectly radiating, which is equivalent to enforcing the acoustic wave equation with homogeneous physical parameters in the exterior domain. Generally, this boundary condition introduces some damping into the system, which is a simplified model of the loss of energy that radiates into an exterior domain $D^+ = \mathbb R^d\setminus D$.
	
	Alternatively, we consider homogeneous Neumann boundary conditions, namely 
	\begin{align}\label{eq:Neumann}
		\partial_\nu u = 0  \quad \text{on} \quad \Gamma=\partial D.
	\end{align}
	In this model problem, the boundary condition does not introduce absorption; the only energy variation arises from the time modulation of $\kappa$.
	
	To complete the formulation of the problem of \eqref{eq:time-varying-acoustic}, we require an additional constraint in time. Two different temporal conditions are naturally of interest and are described below.
	
	\subsubsection*{Perspective I: Evolution problem}
	We complete the acoustic wave equation with time-dependent coefficients \eqref{eq:time-varying-acoustic}, by one of the boundary conditions \eqref{eq:Absorbing}/\eqref{eq:Neumann} to a well-posed evolution problem by imposing initial values, namely $$u(\cdot,0) = u_0 \quad\text{and}\quad \partial_t u(\cdot,0) = v_0.$$ 
	The resulting initial value problem is well-posed and stable for sufficiently regular initial values $(u_0,v_0)\in H^1(D)^2$ and can be analyzed with techniques based on semigroup theory, which has been conducted in \cite{K73}. Here, $H^1(D)$ denotes the usual Sobolev space of square-integrable functions whose weak \JN{derivatives are} square-integrable in $D$. 
	
	\subsubsection*{Perspective II: Quasi-periodic temporal conditions}
	Alternatively, and of particular interest in the design of Floquet metamaterials, is the existence, construction, and manipulation of \emph{Floquet--Bloch solutions}: We seek $\omega\in\mathbb C$ and an accompanying $u$ that fulfills \eqref{eq:time-varying-acoustic} in $D$ with $f=0$, one of the boundary conditions \eqref{eq:Absorbing}/\eqref{eq:Neumann} on the boundary $\Gamma$ and has the form
	\begin{align}
		u^{} = \Re e^{-i\omega t} u^{}_{\omega} ,
	\end{align}
	where $u_{\omega}$ is complex-valued and T-periodic. 
	Let $\Omega = 2\pi/T$ be the frequency of the time modulation. 
	We note that, by construction, these values $\omega$ are only defined up to translations of $n\Omega$ for $n\in \mathbb Z$. Consequently, we write that the quasi-frequency $\omega$ is an element of the temporal Brillouin zone $\mathbb T = \mathbb C /\Omega \mathbb Z$ and has a representation with real part in the interval $(-\Omega/2,\Omega/2]$.

	%\textcolor{red}{Introduction.}
	
	In this manuscript, we are mostly interested in the second \JN{perspective}, namely the numerical approximation of the Floquet exponents and their corresponding quasi-periodic solutions. However, the corresponding initial value problem and its analysis provide key insights into modes of the coupled harmonics and clarifies what Floquet exponents can be expected. 
    
	\subsection{Weak formulations}
    
	We start by deriving a weak formulation of the initial value problem with the absorbing boundary condition \eqref{eq:Absorbing}. Multiplying \eqref{eq:time-varying-acoustic} by a test function $v\in H^1(D)$ yields the identity
	\begin{align*}
		\left(v, \partial_t^2  u \right)_{L^2(D)} -\left(v, \nabla \cdot \kappa \nabla u\right)_{L^2(D)}  = \left (v,f\right)_{L^2(D)}. 
	\end{align*}
	We use Green's formula for the second summand and insert the absorbing boundary condition to arrive at the following weak formulation: We seek $u: [0,\widetilde T] \rightarrow H^1(D)$, such that for $0 \le t\le \widetilde T$ we have, for all $v \in H^1(D)$: 
	\begin{align}\label{eq:time-varying-acoustic-weak-absorbing}
		\left(v, \partial_t^2  u \right)_{L^2(D)} + \left(\nabla v,  \kappa \nabla u\right)_{L^2(D)} 
		+ \left( v, \kappa_0 \partial_t u \right)_{L^2(\Gamma)} = \left ( v,f\right)_{L^2(D)}. 
	\end{align}
	Throughout the paper, we \JN{make} at least the following assumptions on the modulation:
	\begin{align}\label{assumpt-kappa}
		\begin{aligned}
			\kappa &\in C^1_{\text{per}}(0,T;L^{\infty}(D))
			\\ 
			\quad 0 &< c_\kappa = \essinf_{x\in D, t\in[0,T]} \kappa(x,t)\le  \esssup_{x\in D, t\in[0,T]} \kappa(x,t)=C_\kappa<\infty.
		\end{aligned}
	\end{align}
	\begin{remark}
		We note that the weak formulation for the homogeneous Neumann boundary condition \eqref{eq:Neumann} reads: Find $u: [0,\widetilde T] \rightarrow H^1(D)$, such that for all $0 \le t\le \widetilde T$  we have, for all $v \in H^1(D)$: 
		\begin{align}\label{eq:time-varying-acoustic-weak-neumann}
			\left(v, \partial_t^2  u \right)_{L^2(D)} + \left(\nabla v,  \kappa \nabla u\right)_{L^2(D)}  = \left ( v,f\right)_{L^2(D)}. 
		\end{align}
	\end{remark}
	
	\section{Energy estimates for the initial value problem}
    
	Our investigations start with energy considerations for the initial value problem with the absorbing boundary condition. We address the case of homogeneous boundary conditions, which lack absorption, at the end of the section.
	The energy of the acoustic wave, at a given time $t$, is given by 
	\begin{align}\label{energy}
		\mathcal E(t) = 	\frac{1}{2} 
		\int_D  \bigg( | \partial_t	u (x,t) |^2
		+\kappa(x,t)\left|\nabla u (x,t)\right|^2 \bigg) \mathrm d x.
	\end{align}
	When convenient, we drop the omnipresent spatial argument $x$ in $u$, $\kappa$ and $f$.
	%\subsection{Energy estimates for $f=0$}

    \subsection{Bounds on $\mathcal{E}(t)$}
   	
	We have the following characterization for the energy associated with the initial value problem \eqref{eq:time-varying-acoustic-weak-absorbing}, formulated on the bounded domain $D\subset \mathbb R^d$, with absorbing boundary conditions. 
	\begin{lemma}\label{lem:energy-identity-with-excit} Let $u\in C^{2}(0,T;H^1(D)) $ solve the evolution problem \eqref{eq:time-varying-acoustic-weak-absorbing} corresponding to the time-modulated system with absorbing boundary conditions \eqref{eq:Absorbing} and further let $\kappa$ fulfill the assumptions \eqref{assumpt-kappa}. %Moreover, assume that the system is source-free, namely assume $f=0$. 
		Then, we have, for all $t>0$, the energy identity
		\begin{align*}
			\mathcal E(t)- \mathcal E(0)
			&=
			\int_0^t \int_D \bigg( \partial_t u (s) f (s)+ \kappa'(s) \left|\nabla u (s)\right|^2  \bigg) \mathrm d x
			-\kappa_0\int_\Gamma
			\left| \partial_t u (s)\right|^2 \mathrm d x \,  \mathrm d s
			.
		\end{align*}
		%		\begin{align*}
			%			\mathcal E(t)- \mathcal E(0)
			%			&=
			%			\int_0^t \int_D \kappa'(x,s) \left|\nabla u(x,s) \right|^2 \mathrm d x
			%			-\kappa_0\int_\Gamma
			%			\left| \partial_t u(s)\right|^2 \mathrm d x \,  \mathrm d s
			%			.
			%		\end{align*}
	\end{lemma}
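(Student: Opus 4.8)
The plan is to differentiate the energy $\mathcal E(t)$ from \eqref{energy} in time and to recognize $\tfrac{\mathrm d}{\mathrm d t}\mathcal E(t)$ as the integrand appearing on the right-hand side of the claimed identity; integrating back over $[0,t]$ then yields the statement. The device that couples this computation to the equation is to test the weak formulation \eqref{eq:time-varying-acoustic-weak-absorbing} with the specific choice $v=\partial_t u(\cdot,t)$, which is precisely the factor that arises when differentiating the quadratic energy.

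Carrying out the differentiation, the kinetic part of \eqref{energy} contributes $\left(\partial_t u,\partial_t^2 u\right)_{L^2(D)}$, while the product rule applied to the potential part $\tfrac12\int_D\kappa\abs{\nabla u}^2$ splits into two pieces: the modulation term containing $\kappa'\abs{\nabla u}^2$, which stems from the explicit time dependence of the coefficient, and $\left(\kappa\nabla u,\nabla\partial_t u\right)_{L^2(D)}$, from differentiating $\abs{\nabla u}^2$. The key cancellation is that inserting $v=\partial_t u$ into \eqref{eq:time-varying-acoustic-weak-absorbing} identifies the sum $\left(\partial_t u,\partial_t^2 u\right)_{L^2(D)}+\left(\kappa\nabla u,\nabla\partial_t u\right)_{L^2(D)}$ with $\left(\partial_t u,f\right)_{L^2(D)}-\kappa_0\norm{\partial_t u}_{L^2(\Gamma)}^2$. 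Substituting this back, all that remains of $\tfrac{\mathrm d}{\mathrm d t}\mathcal E(t)$ is the source term, the negative boundary dissipation $-\kappa_0\int_\Gamma\abs{\partial_t u}^2$, and the modulation term; integrating over $s\in[0,t]$ produces the asserted balance.

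The principal obstacle is not this formal manipulation but its rigorous justification for a merely weak solution. A priori, \eqref{eq:time-varying-acoustic-weak-absorbing} only gives $u(\cdot,t)\in H^1(D)$ for each $t$, so $\partial_t u$ is not obviously an admissible test function in $H^1(D)$, and the pairing $\left(\partial_t u,\partial_t^2 u\right)_{L^2(D)}$ has to be read as $\tfrac12\tfrac{\mathrm d}{\mathrm d t}\norm{\partial_t u}_{L^2(D)}^2$ in a distributional sense. I would resolve this by appealing to the higher temporal regularity of the evolution problem furnished by semigroup theory (as cited for Formulation~I), under which $t\mapsto\mathcal E(t)$ is absolutely continuous and the test function is legitimate; alternatively one proves the identity for a Galerkin or time-smoothed approximation, where every step is licit, and passes to the limit. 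Finally, the hypothesis $\kappa\in C^1_{\mathrm{per}}(0,T;L^\infty(D))$ from \eqref{assumpt-kappa} is precisely what makes $\kappa'$ exist and be bounded, so that differentiating the potential energy is permitted and the modulation term is finite.
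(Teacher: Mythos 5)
Your proposal is correct and essentially identical to the paper's proof: both hinge on testing \eqref{eq:time-varying-acoustic-weak-absorbing} with $v=\partial_t u$, and your order of operations (differentiate the energy, then substitute the equation) is the same computation as the paper's (substitute, then integrate by parts in time), while your discussion of why $\partial_t u$ is an admissible test function addresses a point the paper leaves implicit. One caveat worth making explicit: since $(\kappa\nabla u,\nabla\partial_t u)_{L^2(D)}=\tfrac12\int_D\kappa\,\partial_t|\nabla u|^2$, a careful execution of either argument yields the modulation term with coefficient $\tfrac12\,\kappa'|\nabla u|^2$ rather than $\kappa'|\nabla u|^2$ as in the stated identity (the paper's own proof silently drops this $\tfrac12$ when integrating by parts), so you should pin down that constant rather than leaving the term as ``containing $\kappa'|\nabla u|^2$.''
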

	\begin{proof}
		Testing \eqref{eq:time-varying-acoustic-weak-absorbing} with $v =\partial_t u$ yields for all $t\ge 0$
		\begin{align*}
			\frac{1}{2}\int_D  \partial_t\left| \partial_t	u(t)\right|^2 
			+ \kappa (t)\partial_t | \nabla u(t) |^2 \,\mathrm d x
			+ \kappa_0\int_\Gamma | \partial_t u (t)|^2 \,\mathrm d x
			& =\int_D \partial_t u (t)f (t)\,\mathrm d x.
		\end{align*}
		After integrating both sides from $0$ to $t$ and using partial integration for the modulated term, we obtain
		\begin{align*}
			&\frac{1}{2}\int_D  |\partial_t	u(t)|^2 - |\partial_t	u_0|^2
			+\kappa(t)|\nabla u(t) |^2- \kappa(0)|\nabla u_0 |^2\mathrm d x 
			%   		\\&
			%   		\left( \left\| \partial_t	u(t)\right\|_{L^2(D)}^2-	\left\|\partial_t	u(0)\right\|_{L^2(D)}^2
			%   		+\left\|\kappa(x,T)^{1/2}\nabla u (T) \right\|_{L^2}^2
			%   		-n_i(0)\left\|\frac{\partial}{\partial x}u(x,0) \right\|_{L^2}^2 \right)
			\\
			&
			+\int_0^t\int_D-\kappa'(s) \left| \nabla u(s) \right|^2 \mathrm d x
			+\kappa_0\int_\Gamma
			\left| \partial_t u(s)\right|^2 \mathrm d x  \,\mathrm d s
			=\int_0^t \int_D \partial_t u (s)f (s)\,\mathrm d x \, \mathrm d s.
		\end{align*}
		Rearranging the terms yields the stated result.
		%The change in the time-modulated enerqy is therefore given by
		%\begin{align*}
		%	\frac{1}{2}(\mathcal E(T)- \mathcal E(0))
		%	&=
		%	\int_0^Tn_i'(t) \left\|\frac{\partial}{\partial x}u(x,t) \right\|_{L^2}^2 \mathrm d t
		%	\\&-n_i(t) \int_0^T \left(\left| \frac{\partial}{\partial t}u(1,t)\right|^2 + \left|\frac{\partial}{\partial t}u(-1,t)\right|^2 \right)\mathrm d t
		%	.
		%\end{align*}
	\end{proof}
	
	The previous result shows that the (externally powered) time modulation can both introduce energy into the system and induce damping. Moreover, we observe that the change in energy induced by the modulation depends on the magnitude of spatial oscillations present in the system.
	Using the energy identity and a Gronwall argument, we obtain an energy result that bounds the change in energy introduced by the modulation over a specified time $t$.
	\begin{proposition}\label{prop:energy-estimate}
		% Let the assumptions on $\kappa$ in \eqref{assumpt-kappa} hold and consider the solution of either the system with absorbing boundary conditions \eqref{eq:time-varying-acoustic-weak-absorbing}, or with homogeneous Neumann boundary conditions in \eqref{eq:time-varying-acoustic-weak-neumann}. 
In the setting of Lemma~\ref{lem:energy-identity-with-excit}, we have the energy estimate
		\begin{align*}
			\mathcal E(t) \le e^{C_{\kappa}' ( t+T)} \left(\mathcal E(0)
			+ \dfrac{1}{4{C_{\kappa}'}}\int_0^t|| f(s) ||^2_{L^2(D)}\mathrm ds\right) ,
		\end{align*}
		%		\begin{align*}
			%			\mathcal E(t)
			%			&\le e^{C_{\kappa} t} \left( \mathcal E(0)
			%			+ \int_0^t  \frac{(1+s^2)\exp(\pi/2)}{2}|| f(s)||^2_{L^2(D)} \mathrm d s\right),
			%		\end{align*}
		with the constant 
		\begin{align*}
			C_{\kappa}' = \dfrac{2}{T}\int_0^T \left\| \dfrac{\kappa'(s)_+}{\kappa(s)} \right\|_{L^{\infty}(D)} \mathrm d s.
		\end{align*}	
	\end{proposition}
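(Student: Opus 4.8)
The plan is to turn the exact energy identity of Lemma~\ref{lem:energy-identity-with-excit} into a differential Gronwall inequality for $\mathcal{E}(t)$ and then integrate it, exploiting the $T$-periodicity of $\kappa$ only in the final averaging step. Differentiating the identity in $t$ (equivalently, reading off its integrand) gives
\[
\frac{\mathrm d}{\mathrm d t}\mathcal{E}(t)=\int_D\Bigl(\partial_t u(t)\,f(t)+\kappa'(t)\,\abs{\nabla u(t)}^2\Bigr)\mathrm d x-\kappa_0\int_\Gamma\abs{\partial_t u(t)}^2\,\mathrm d x .
\]
The boundary contribution $-\kappa_0\int_\Gamma\abs{\partial_t u}^2$ is nonpositive and would simply be dropped; for the Neumann problem \eqref{eq:time-varying-acoustic-weak-neumann} it is absent altogether, so the same estimate covers both boundary conditions. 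It then remains to bound the two volume terms by a multiple of $\mathcal{E}(t)$ plus a source contribution.

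For the modulation term I would keep only the part that can increase the energy: writing $\kappa'=\kappa'_+-\kappa'_-$ and discarding the nonpositive piece,
\[
\int_D\kappa'(t)\,\abs{\nabla u(t)}^2\,\mathrm d x\le\int_D\frac{\kappa'(t)_+}{\kappa(t)}\,\kappa(t)\,\abs{\nabla u(t)}^2\,\mathrm d x\le g(t)\int_D\kappa(t)\,\abs{\nabla u(t)}^2\,\mathrm d x\le 2g(t)\,\mathcal{E}(t),
\]
with $g(t):=\norm{\kappa'(t)_+/\kappa(t)}_{L^\infty(D)}$ and using $\int_D\kappa\abs{\nabla u}^2\le 2\mathcal{E}$ directly from \eqref{energy}; assumption \eqref{assumpt-kappa} guarantees $\kappa>0$, so the division is licit. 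The source term I would control by Cauchy--Schwarz followed by Young's inequality, $\int_D\partial_t u\,f\le\norm{\partial_t u}_{L^2(D)}\norm{f}_{L^2(D)}\le\varepsilon\norm{\partial_t u}_{L^2(D)}^2+\tfrac{1}{4\varepsilon}\norm{f}_{L^2(D)}^2$, with the parameter $\varepsilon$ tuned so that the coefficient of $\norm{f}^2$ becomes $1/(4C_{\kappa}')$ and the remaining $\norm{\partial_t u}^2\le 2\mathcal{E}$ is fed back into the Gronwall coefficient.

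The final step is the Gronwall integration combined with the periodicity averaging, which is what produces the stated constant. After the above bounds one obtains a differential inequality of Gronwall type, $\tfrac{\mathrm d}{\mathrm d t}\mathcal{E}\le 2g(t)\mathcal{E}+\tfrac{1}{4C_{\kappa}'}\norm{f}_{L^2(D)}^2$ (up to absorbing a further multiple of $\mathcal{E}$, see below), and since $g\ge0$ the integrating factor $\exp(-\int_0^s 2g)$ is bounded by $1$, leaving $\mathcal{E}(t)\le e^{\int_0^t 2g}\bigl(\mathcal{E}(0)+\tfrac{1}{4C_{\kappa}'}\int_0^t\norm{f}_{L^2(D)}^2\bigr)$. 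The key point is that $g$ inherits the $T$-periodicity of $\kappa$: writing $t=mT+\tau$ with $m=\lfloor t/T\rfloor$ and $0\le\tau<T$, nonnegativity of $g$ gives $\int_0^t g\le(m+1)\int_0^T g\le\tfrac{t+T}{T}\int_0^T g$, hence $\int_0^t 2g\le C_{\kappa}'(t+T)$ by the very definition of $C_{\kappa}'$. Substituting this bound on the exponent then yields the claim.

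The main obstacle I anticipate is the constant bookkeeping in the source term: Young's inequality unavoidably returns a multiple of $\mathcal{E}(t)$ alongside the desired $\tfrac{1}{4C_{\kappa}'}\norm{f}^2$, and one must absorb this without spoiling the clean exponent $C_{\kappa}'(t+T)$. This forces the Young parameter and the averaging estimate to be coordinated carefully; the guiding model is the homogeneous case $f=0$, where $\tfrac{\mathrm d}{\mathrm d t}\mathcal{E}\le 2g(t)\mathcal{E}$ integrates cleanly to $e^{C_{\kappa}'(t+T)}\mathcal{E}(0)$, and the source must be handled so as to reuse exactly this period-averaged exponent. The periodicity step itself is by contrast routine once the $T$-periodicity of $g$ is noted, the only subtlety being the harmless additive slack $T$, which appears because a partial period can contribute almost a full period's worth of growth.
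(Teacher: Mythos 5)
Your proposal follows the paper's proof essentially step for step: start from the energy identity of Lemma~\ref{lem:energy-identity-with-excit}, discard the nonpositive absorption term, bound the modulation term by $2g(s)\mathcal E(s)$ with $g(s)=\|\kappa'(s)_+/\kappa(s)\|_{L^\infty(D)}$, treat the source term by Cauchy--Schwarz and Young, and conclude with Gronwall plus the periodicity averaging $\int_0^t 2g(s)\,\mathrm ds\le \tfrac{t+T}{T}\int_0^T 2g(s)\,\mathrm ds = C_\kappa'(t+T)$, which you carry out correctly and which is indeed the only place periodicity enters. The one step you explicitly leave open is a genuine gap, and the device the paper uses for it is worth naming: the Young remainder $\varepsilon\|\partial_t u\|^2$ (with $\varepsilon=C_\kappa'$, forced by the target coefficient $1/(4C_\kappa')$) is absorbed into the \emph{kinetic} half $\tfrac12\|\partial_t u\|^2$ of $\mathcal E$, while the modulation term $g(s)\|\sqrt{\kappa}\nabla u\|^2$ is absorbed into the \emph{potential} half $\tfrac12\|\sqrt{\kappa}\nabla u\|^2$. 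Because the two perturbations load onto complementary summands of the energy, their contributions combine into a pointwise maximum of the two rates, $2\max\bigl(C_\kappa',\,g(s)\bigr)\mathcal E(s)$, rather than the sum $2C_\kappa'\mathcal E(s)+2g(s)\mathcal E(s)$ that your bookkeeping produces. This is the missing idea relative to your write-up.

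That said, your suspicion that the constants do not close cleanly is justified, and the issue persists in the paper's own proof: even with the maximum, $\int_0^t 2\max(C_\kappa',g(s))\,\mathrm ds$ is bounded by $C_\kappa'(t+T)$ only if $g\le C_\kappa'$ almost everywhere; in general one only obtains an exponent of the form $C_\kappa'(2t+T)$ or so, since $\max(a,b)\le a+b$ and the constant rate $2C_\kappa'$ alone already contributes $2C_\kappa' t$. The paper moreover passes from an intermediate coefficient written as $\max(C_\kappa',g(s))$ to a Gronwall exponent $\int_0^t\max(C_\kappa',2g(s))\,\mathrm ds$ and then asserts the bound $C_\kappa'(t+T)$ without justification. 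So your proposal is faithful to the published argument up to and including its weakest point; to make either version airtight one must either accept a larger (but still linear in $t$) exponent, impose the pointwise condition $g\le C_\kappa'$, or be content with the clean exponent only in the homogeneous case $f=0$, where your differential inequality $\tfrac{\mathrm d}{\mathrm dt}\mathcal E\le 2g(t)\mathcal E$ integrates exactly as you describe.
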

	\begin{proof}
		For all $t>0$, omitting the term implied through the absorption together with the Cauchy--Schwartz inequality and finally the generalized Young's inequality yields
		\begin{align*}
			\mathcal E(t)&\le \mathcal E(0)
			+ \int_0^t \left( \partial_t u (s) , f (s)\right)_{L^2(D)}  +
			\left\| \dfrac{\kappa'(s)_+}{\kappa(s)} \right\|_{L^{\infty}(D)}
			\left\|\sqrt{\kappa}(s)\nabla u(s) \right\|_{L^2(D)}^2\mathrm d s .
			%			\\&\le
			%			\mathcal E(0)
			%			+ \int_0^t {C_{\kappa}'}|| \partial_t u (s)||^2_{L^2(D)}    + {C_{\kappa}'}^{-1}|| f(s) ||^2_{L^2(D)}  
			%			\\ & + 2 \dfrac{\kappa'(s)}{\kappa(s)}\left( \frac{1}{2}\kappa(s)\left\|\nabla u(s) \right\|_{L^2(D)}^2\right)\mathrm d s
			\\ & \le 
			\mathcal E(0)
			+ \int_0^t  \dfrac{1}{4{C_{\kappa}'}}|| f(s) ||^2_{L^2}  +
			\max\bigg({C_{\kappa}'} ,  \left\| \dfrac{\kappa'(s)_+}{\kappa(s)} \right\|_{L^{\infty}(D)}\bigg)\mathcal E(s) \mathrm d s .
			%					 \\ & \le
			%					 \mathcal E(0)
			%					+ \int_0^t \frac{1}{2(1+s)}|| \partial_t u ||^2_{L^2}    + \frac{1+s}{2}|| f ||^2_{L^2}  +
			%					\left\|\partial_x u(x,s) \right\|_{L^2}^2 \frac{1+s}{2}(n_i'(s))^2 \mathrm d s.
		\end{align*}
		The result is now obtained by applying the integral form of Gronwall's inequality \cite{B43}, which gives
		\begin{align*}
			\mathcal E(t) & \le \left(\mathcal E(0)
			+ \dfrac{1}{4{C_{\kappa}'}}\int_0^t|| f(s) ||^2_{L^2(D)}\mathrm ds\right)\exp\left(\int_0^t \max\left({C_{\kappa}'} , 2 \left\| \dfrac{\kappa'(s)_+}{\kappa(s)} \right\|_{L^{\infty}(D)}\right)\mathrm d s \right)
			%	\\ &
			%	\le \left(\mathcal E(0)
			%	+  \frac{1+t^2}{2}|| f ||^2_{L^2(D)}\right) \exp\left(\int_0^t \frac{1}{1+s^2} + 2 \dfrac{\kappa'(s)_+}{\kappa(s)}  \mathrm d s\right) 
			\\ &
			\le \left(\mathcal E(0)
			+ \dfrac{1}{4{C_{\kappa}'}}\int_0^t|| f(s) ||^2_{L^2(D)}\mathrm ds\right) e^{{C_{\kappa}'} ( t+T)} .
		\end{align*}
		%		Finally, we split the solution into two parts, namely $u = u_1+u_2$. The summands fulfill initial value problems with vanishing right-hand side, namely $u_1$ has the nontrivial initial energy $\mathcal E(0)$ and vanishing excitation in \eqref{eq:time-varying-acoustic} and vanishing initial values respectively (i.e., we set $u_2$ to be initially vanishing). Applying the energy estimate of Proposition~\ref{prop:energy-estimate} to $u_1$ the inequality above for $u_2$ gives the stated result.
		
		%	\begin{align*}
			%		\mathcal E(t) \le \mathcal E(0)\left(
			%		1 +
			%		\int_0^t\frac{| \kappa'(s)\kappa(s)|}{\kappa(0)^2}\mathrm d s\right)
			%		+e^{C_\kappa t}\tfrac{1+t^2}{2}|| f ||^2_{L^2(D)}.
			%	\end{align*}
	\end{proof}

    	\drop{The following two remarks discuss the special case of $t=T$, as well as the more general case time-dependent $\kappa$ that are not periodic.
	\begin{remark}
    \rhc{Useless remark: not worth mentioning; remove}
		At the full period $t=T$, we have the slightly stronger estimate 
		\begin{align}\label{eq:energy-period}
			\mathcal E(T) \le e^{{C_{\kappa}'} T} \left(\mathcal E(0)
			+ \dfrac{1}{4{C_{\kappa}'}}\int_0^T|| f(s) ||^2_{L^2(D)}\mathrm ds\right) .
		\end{align}
	\end{remark}
	
	\begin{remark} For $\kappa$ without periodicity, we can follow the argument of Proposition~\ref{prop:energy-estimate} to obtain the similar estimate
		\begin{align*}
			\mathcal E(t) \le e^{C_{\kappa,\infty}'} \left(\mathcal E(0)
			+ \dfrac{1}{4{C_{\kappa,\infty}'}}\int_0^t|| f(s) ||^2_{L^2(D)}\mathrm ds\right) ,
		\end{align*}
		with the constant $C_{\kappa,\infty}' = \max_{t>0} \left\|\dfrac{\kappa_+'(t)}{\kappa(t)} \right\|_{L^{\infty}(D)}.$
	\end{remark}}
	%	\begin{remark}[Energy estimates for homogeneous Neumann boundary conditions] All estimates of the previous section further hold for the formulation \eqref{eq:time-varying-acoustic-weak-neumann}. Again, for spatially invariant $\kappa(t) = \kappa(x,t)$ and without any excitation, we have 
		%		\begin{align*}
			%			\mathcal E(t)\le  \mathcal E(0)\left(
			%			1 +
			%			\int_0^t\frac{| \kappa'(s)\kappa(s)|}{\kappa(0)^2}\mathrm d s\right)
			%			.
			%		\end{align*}
		%		For $f\neq 0 $, we again have the weaker bound
		%		\begin{align*}
			%			\mathcal E(t)
			%			&\le   \mathcal E(0) \left(
			%			1 + 
			%			\int_0^t\frac{| \kappa'(s)\kappa(s)|}{\kappa(0)^2}\mathrm d s\right)
			%			+ e^{C_{\kappa} t}\int_0^t  \frac{(1+s^2)\exp(\pi/2)}{2}|| f(s)||^2_{L^2(D)} \mathrm d s,
			%		\end{align*}
		%		with the constant $C_{\kappa}$ being given by (\ref{ckappa}). 
		%	\end{remark}
	With these estimates, we have the right tools at our disposal to give some basic results on Floquet exponents, which are described in the following. The energy estimates for a given geometry $\JN{D}$ and physical parameters $\kappa(x,t),\kappa_0$ are not necessarily sharp. The existence of exponentially growing modes in particular, which \JN{plays} a key role in applications like signal amplification, \JN{cannot} be established by this approach. The numerical study of Floquet exponents provides an effective approach for tasks of this type and are introduced in the following section.
	\subsection{Characterizations of Floquet exponents $\omega$}
	We say $\omega$ is a Floquet \JN{exponent} of the time-modulated system if there exists a complex-valued \JN{$T$-periodic} $u^\omega \in \JN{H^1_{\text{per}}(0,T;H^1(D))}$, such that 
	\begin{equation}\label{eq:Bloch}
		\Re e^{-i\omega t} u^{\omega}
	\end{equation}
	is a nontrivial solution to the corresponding initial value problem with no excitation \JN{($f=0$)}. Since Floquet--Bloch solutions lie in the kernel of the left-hand side of \eqref{eq:time-varying-acoustic}, we also refer to them as \emph{resonant modes} and to associated Floquet exponents $\omega$ as \emph{resonant quasi-frequencies}.
	Here, $H^1_{\text{per}}(0,T;H^1(D))$ denotes the set of functions that are locally in $H^1(\mathbb{R})$ and periodic with period $T$. 
	
	We have the following result for the Floquet exponents of the system with absorbing boundary conditions.
	\begin{lemma}\label{lem:FL-multipliers-absorption} Let $\kappa$ fulfill the assumptions \eqref{assumpt-kappa}.
		All Floquet exponents $\omega$ of the system \eqref{eq:time-varying-acoustic-weak-absorbing}, which weakly enforces absorbing boundary conditions, have bounded imaginary part, i.e.,
		\begin{align*}
			\normalfont\Im \omega \le {C_{\kappa}'}.
		\end{align*}
		%		\begin{align*}
			%			\mathcal E(t)
			%			&\le e^{C_{\kappa} t} \left( \mathcal E(0)
			%			+ \int_0^t  \frac{(1+s^2)\exp(\pi/2)}{2}|| f(s)||^2_{L^2(D)} \mathrm d s\right),
			%		\end{align*}
		with the constant $	{C_{\kappa}'}$ from Proposition~\ref{prop:energy-estimate}.
	\end{lemma}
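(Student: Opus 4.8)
The plan is to exploit the exact periodic structure of a Floquet solution over one modulation period and to sandwich the resulting growth factor between the value forced by the ansatz and the upper bound furnished by Proposition~\ref{prop:energy-estimate}. Concretely, let $\omega$ be a Floquet exponent and let $w(x,t) = e^{-i\omega t} u^{\omega}(x,t)$ be the associated complex solution of the homogeneous equation \eqref{eq:time-varying-acoustic-weak-absorbing} with $f=0$, whose real part is the physical solution; here $u^{\omega}\in H^1_{\mathrm{per}}(0,T)$ is $T$-periodic in time. Since the coefficients of the equation are real, both $\Re w$ and $\Im w$ are real-valued solutions satisfying the absorbing boundary condition, so the energy identity of Lemma~\ref{lem:energy-identity-with-excit} and hence the estimate of Proposition~\ref{prop:energy-estimate} apply to each of them. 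Adding the two contributions, the complex energy
\begin{align*}
  \mathcal E_w(t) = \tfrac12 \int_D \left( \abs{\partial_t w(x,t)}^2 + \kappa(x,t)\abs{\nabla w(x,t)}^2 \right) \mathrm d x
\end{align*}
obeys the same bound; evaluating Proposition~\ref{prop:energy-estimate} at $t=T$ with $f=0$ gives $\mathcal E_w(T) \le e^{{C_{\kappa}'}(T+T)}\mathcal E_w(0) = e^{2{C_{\kappa}'}T}\mathcal E_w(0)$.

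Next I would compute $\mathcal E_w(T)$ exactly from the ansatz. Differentiating yields $\partial_t w = e^{-i\omega t}\left(-i\omega\, u^{\omega} + \partial_t u^{\omega}\right)$ and $\nabla w = e^{-i\omega t}\nabla u^{\omega}$. Because $u^{\omega}$ lies in $H^1_{\mathrm{per}}(0,T)$, both profiles $-i\omega\, u^{\omega}+\partial_t u^{\omega}$ and $\nabla u^{\omega}$ are $T$-periodic in time, and $\kappa$ is $T$-periodic by \eqref{assumpt-kappa}; hence at $t=T$ every factor agrees with its value at $t=0$ apart from the prefactor $e^{-i\omega T}$. Using $\abs{e^{-i\omega T}}^2 = e^{2\,\Im\omega\, T}$ I obtain the exact per-period growth relation
\begin{align*}
  \mathcal E_w(T) = e^{2\,\Im\omega\, T}\,\mathcal E_w(0).
\end{align*}

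Finally I would combine the two relations. Provided $\mathcal E_w(0)>0$, dividing by $\mathcal E_w(0)$ leaves $e^{2\,\Im\omega\, T} \le e^{2{C_{\kappa}'}T}$, and taking logarithms gives $\Im\omega \le {C_{\kappa}'}$, as claimed. The one point needing separate care is the degenerate case $\mathcal E_w(0)=0$: then the estimate forces $\mathcal E_w(t)\equiv 0$, so $w$ is constant in space and time, which is compatible with the periodicity of $u^{\omega}$ only when $e^{i\omega T}=1$, i.e. $\omega\in\Omega\mathbb Z$ is real and the bound holds trivially.

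I expect the main difficulty to be conceptual rather than computational. One must first make the energy machinery, derived for real solutions, available for the complex Floquet profile $w$ (via the decomposition into $\Re w$ and $\Im w$, each a real solution), and, above all, recognize that the energy should be evaluated at exactly one full period $t=T$: this is what turns the $T$-periodicity of $u^{\omega}$ and $\kappa$ into the clean multiplicative factor $e^{2\,\Im\omega\, T}$, whereas a pointwise-in-$t$ application of Proposition~\ref{prop:energy-estimate} would fail to isolate $\Im\omega$. I would also verify carefully that membership of $u^{\omega}$ in $H^1_{\mathrm{per}}(0,T)$ indeed delivers the time-periodicity of $\partial_t u^{\omega}$ and $\nabla u^{\omega}$ used in the exact scaling.
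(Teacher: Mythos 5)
Your proposal is correct and follows essentially the same route as the paper, whose proof consists of a single sentence invoking Proposition~\ref{prop:energy-estimate} to rule out growth rates exceeding $C_{\kappa}'$; you have simply made explicit the per-period growth identity $\mathcal E_w(T)=e^{2\Im\omega\,T}\mathcal E_w(0)$ (which the paper itself records later, in the proof of Proposition~\ref{prop:truncated_Floquet-h}) and compared it with the energy bound at $t=T$. The only implicit step — treating $e^{-i\omega t}u^{\omega}$ as a complex solution of the real-coefficient equation so that $\Re w$ and $\Im w$ are both admissible in the energy estimate — is exactly the reading the paper adopts throughout, so your argument is a faithful, fully detailed version of the intended proof.
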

	\begin{proof}
		The statement is a direct consequence of the energy estimate described in Proposition~\ref{prop:energy-estimate}, which prohibits the existence of exponentially growing solutions with a higher rate than $C_{\kappa}'$.
	\end{proof}
	
	For homogeneous Neumann boundary conditions, the time-reversed system is structurally identical to the forward problem (with modulation $\kappa(x,-t)$). Repeating the previous argument therefore yields the following result.
	
	\begin{lemma}\label{lem:FL-multipliers-neumann}
		Let $\kappa$ fulfill the assumptions \eqref{assumpt-kappa}.
		All Floquet exponents $\omega$ of the system with homogeneous Neumann boundary conditions \eqref{eq:time-varying-acoustic-weak-neumann} are located in a strip around the real axis, namely
		\begin{align*}
			|	\normalfont\Im \omega |\le {C_{\kappa}'},
		\end{align*}
		with the constant $	{C_{\kappa}'}$ from Proposition~\ref{prop:energy-estimate}.
	\end{lemma}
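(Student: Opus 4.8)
The plan is to obtain the two-sided bound by pairing a direct application of Proposition~\ref{prop:energy-estimate} with a time-reversal argument. For the upper bound $\Im\omega\le C_{\kappa}'$ there is nothing genuinely new: the Neumann energy identity is obtained from Lemma~\ref{lem:energy-identity-with-excit} by simply dropping the non-negative boundary contribution $\kappa_0\int_\Gamma|\partial_t u|^2$, whose absence only makes the estimate easier. Hence Proposition~\ref{prop:energy-estimate} applies verbatim to \eqref{eq:time-varying-acoustic-weak-neumann}, and the argument of Lemma~\ref{lem:FL-multipliers-absorption} rules out Floquet solutions growing faster than the rate $C_{\kappa}'$. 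The genuinely new content is therefore the lower bound $\Im\omega\ge -C_{\kappa}'$, i.e. the statement that solutions cannot decay arbitrarily fast either.

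The structural fact I would exploit is that, unlike the absorbing problem, the Neumann problem contains no first-order-in-time term and is thus time-reversible. Concretely, I would set $\tilde u(x,t):=u(x,-t)$ and note that, since $\partial_t^2$ is invariant under $t\mapsto -t$ and the homogeneous Neumann condition \eqref{eq:Neumann} is preserved, $\tilde u$ solves \eqref{eq:time-varying-acoustic-weak-neumann} with $f=0$ for the reversed modulation $\tilde\kappa(x,t):=\kappa(x,-t)$. Using $\Re z=\Re\bar z$, a Floquet solution $u=\Re(e^{-i\omega t}u_\omega)$ of the original system is carried to $\tilde u=\Re(e^{-i\bar\omega t}\,\overline{u_\omega(\cdot,-t)})$, which is a Floquet solution of the reversed system with exponent $\bar\omega$ and $T$-periodic profile $\overline{u_\omega(\cdot,-t)}$. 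In particular its imaginary part is $-\Im\omega$.

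Because $\tilde\kappa$ again obeys \eqref{assumpt-kappa} with the same ellipticity bounds $c_\kappa,C_\kappa$ and the same period $T$, Proposition~\ref{prop:energy-estimate} applies to the reversed system and yields $-\Im\omega=\Im\bar\omega\le\tilde C_{\kappa}'$, where $\tilde C_{\kappa}'$ is the constant of Proposition~\ref{prop:energy-estimate} formed from $\tilde\kappa$. Combined with the forward bound, this gives $-\tilde C_{\kappa}'\le \Im\omega\le C_{\kappa}'$, which is the two-sided localization up to identifying the constant.

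The one point requiring care — and the main obstacle I anticipate — is precisely that identification. Differentiating $\tilde\kappa(\cdot,s)=\kappa(\cdot,-s)$ gives $\tilde\kappa'(\cdot,s)=-\kappa'(\cdot,-s)$, so $\tilde\kappa'(\cdot,s)_+=\bigl(\kappa'(\cdot,-s)\bigr)_-$; after the substitution $s\mapsto -s$ and using $T$-periodicity of the integrand, the reversed constant reads $\tilde C_{\kappa}'=\tfrac{2}{T}\int_0^T\bigl\|(\kappa'(s))_-/\kappa(s)\bigr\|_{L^\infty(D)}\,\mathrm d s$, i.e. it is governed by the \emph{negative} part of $\kappa'$ rather than the positive part appearing in $C_{\kappa}'$. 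These coincide only when $\kappa$ is time-symmetric, so the honest two-sided estimate is $|\Im\omega|\le\max(C_{\kappa}',\tilde C_{\kappa}')$. To land on the single symmetric constant stated in the lemma one simply reruns the Gronwall step bounding $|\kappa'|\,|\nabla u|^2$ in both directions, replacing $(\kappa')_+$ by $|\kappa'|$; since $(\kappa')_\pm\le|\kappa'|$, the resulting constant dominates both rates and covers forward and reversed growth at once. Everything else is a verbatim transcription of the forward argument, so this bookkeeping of constants under time reversal is the only substantive work.
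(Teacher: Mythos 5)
Your proof follows the same route as the paper: the upper bound comes from applying Proposition~\ref{prop:energy-estimate} directly (the Neumann problem merely drops the nonnegative absorption term), and the lower bound from running the same energy argument on the time-reversed solution, which is exactly the paper's one-line proof. Your additional bookkeeping of the constant is a genuine refinement that the paper glosses over: since $\tilde\kappa'(s)_+=(\kappa'(-s))_-$ and the $L^\infty(D)$-norm sits \emph{inside} the time integral, the reversed constant is governed by the negative part of $\kappa'$ and need not coincide with $C_\kappa'$ (periodicity only forces equality of $\int_0^T(\kappa')_+\,\mathrm ds$ and $\int_0^T(\kappa')_-\,\mathrm ds$ pointwise in $x$, not after taking suprema), so the clean two-sided bound with a single constant indeed requires either replacing $(\kappa')_+$ by $|\kappa'|$ in the Gronwall step, as you propose, or stating the bound with $\max(C_\kappa',\tilde C_\kappa')$.
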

	\begin{proof}
		The statement is a direct consequence of the energy estimate described in Proposition~\ref{prop:energy-estimate}, together with the same argument for the temporally reversed solution $u(x,T-t)$.
	\end{proof}
	\begin{remark}\label{rem:const-FL-multiplier}
		We note that $u(x,t)=\chi_D$ is always a Floquet solution with $\omega = 0\in \mathbb T$, both for the absorbing boundary condition \eqref{eq:Absorbing} and for the homogeneous boundary condition \eqref{eq:Neumann}.
	\end{remark}

    \subsection{Spatial Semi-discretization}
	In the following, let $X_h\subset H^1(D)$ for $h\ge 0$ denote an abstract sequence of finite-dimensional subspaces of $H^1(D)$, with a mesh width $h$ that tends to zero. We then obtain the following weak formulation on the Galerkin subspace $X_h$.
	Find $u_h: [0,\widetilde T] \rightarrow X_h$, such that for all $0 \le t\le \widetilde T$  and $v_h \in X_h$, we have: 
	\begin{gather}\label{eq:weak-absorption-h}
		\left(v_h, \partial_t^2  u_h \right)_{L^2(D)} + \left(\nabla v_h,  \kappa \nabla u_h\right)_{L^2(D)} 
		+ \kappa_0\left( v_h,   \partial_t u_h \right)_{L^2(\Gamma)} = \left ( v_h,f\right)_{L^2(D)}. 
	\end{gather}
%	\begin{remark}
%		Again, homogeneous Neumann boundary conditions correspond to the slightly simpler weak formulation
%\begin{align}\label{eq:weak-neumann-h}
%			\left(v_h, \partial_t^2  u_h \right)_{L^2(D)} + \left(\nabla v_h,  \kappa \nabla u_h\right)_{L^2(D)} 
%			= \left ( v_h,f\right)_{L^2(D)}. 
%		\end{align}
%	\end{remark}
	
	We note that the previous energy considerations also apply to the spatial discretization, which is described in the following. 
	%\subsection{Energy estimates for the spatial semi-discretization}
	The energy of the spatially discrete acoustic wave, at a given time $t$, is given by 
	\begin{align}\label{eq:energy-h}
		\mathcal E_h(t) = 	\frac{1}{2} 
		\int_D  \bigg( | \partial_t	u_h (t)|^2
		+\kappa(t) \left|\nabla u_h (t) \right|^2 \bigg) \, \mathrm d x.
	\end{align}
	
	\begin{proposition}\label{prop:energy-estimate-h}
		%Consider the system without excitation, namely $f=0$. We have, for all times $t \ge 0$, the estimate
		Consider the setting of Lemma~\ref{lem:energy-identity-with-excit} with either of the boundary conditions \eqref{eq:time-varying-acoustic-weak-absorbing} / \eqref{eq:time-varying-acoustic-weak-neumann}. Then, we have, for all $t\ge 0$, the energy estimate
		\begin{align*}
			\mathcal E_h(t) \le e^{{C_{\kappa}'} ( t+T)} \left(\mathcal E_h(0)
			+ \dfrac{1}{4C_{\kappa'}}\int_0^t|| f(s) ||^2_{L^2(D)}\mathrm ds\right) .
		\end{align*}
		%		\begin{align*}
			%			\mathcal E(t)
			%			&\le e^{C_{\kappa} t} \left( \mathcal E(0)
			%			+ \int_0^t  \frac{(1+s^2)\exp(\pi/2)}{2}|| f(s)||^2_{L^2(D)} \mathrm d s\right),
			%		\end{align*}
		with the same constant ${C_{\kappa}'}$ from Proposition~\ref{prop:energy-estimate}.
		%		 For all $t>0$, we have the spatially discrete energy estimate
		%		\begin{align*}
			%			\mathcal E_h(t)\le  \mathcal E_h(0)\left(
			%			1 +
			%			\int_0^t\frac{| \kappa'(s)\kappa(s)|}{\kappa(0)^2}\mathrm d s\right)
			%			-\int_0^t
			%			\int_\Gamma
			%			\kappa(t)\kappa_0 \left| \partial_t u_h(s)\right|^2 \,\mathrm d x \, \mathrm d s
			%			.
			%		\end{align*}
		%		The second summand of the energy fulfills the following stronger estimate:
		%		\begin{align*}
			%			\frac{1}{2}\int_D \kappa(t) | \nabla u_h |^2 \mathrm d x \le \mathcal E(0) \frac{ \kappa(t)^2}{\kappa_0(0)^2}.
			%		\end{align*}
	\end{proposition}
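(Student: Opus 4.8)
The plan is to mirror the argument used for the continuous problem, observing that every step in the proofs of Lemma~\ref{lem:energy-identity-with-excit} and Proposition~\ref{prop:energy-estimate} carries over verbatim to the Galerkin setting. The single fact that makes this possible is that the subspaces $X_h$ are conforming, $X_h \subset H^1(D)$, and time-independent.

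First I would establish the discrete analogue of the energy identity of Lemma~\ref{lem:energy-identity-with-excit}. The decisive observation is that, for each fixed $t$, the time derivative $\partial_t u_h(t)$ again lies in $X_h$, since $X_h$ is a fixed finite-dimensional subspace that does not vary with $t$. Hence $v_h = \partial_t u_h$ is an admissible test function in \eqref{eq:weak-absorption-h}. Testing with this choice and proceeding exactly as before --- integrating in time from $0$ to $t$ and integrating by parts in time in the modulated stiffness term --- yields
\begin{align*}
	\mathcal E_h(t) - \mathcal E_h(0) = \int_0^t \int_D \bigg( \partial_t u_h(s) f(s) + \kappa'(s) \left|\nabla u_h(s)\right|^2 \bigg) \mathrm d x - \kappa_0 \int_\Gamma \left|\partial_t u_h(s)\right|^2 \mathrm d x \, \mathrm d s.
\end{align*}

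Second, I would bound the right-hand side precisely as in the proof of Proposition~\ref{prop:energy-estimate}: drop the nonnegative boundary absorption term, apply Cauchy--Schwarz followed by the generalized Young's inequality to the forcing term, and control the modulated contribution through $\left\| \kappa'(s)_+/\kappa(s) \right\|_{L^{\infty}(D)} \left\| \sqrt{\kappa}(s) \nabla u_h(s) \right\|_{L^2(D)}^2 \le 2 \left\| \kappa'(s)_+/\kappa(s) \right\|_{L^{\infty}(D)} \mathcal E_h(s)$. None of these estimates refers to the completeness or infinite dimensionality of the underlying space; they are purely pointwise-in-space inequalities combined with the definition \eqref{eq:energy-h} of $\mathcal E_h$. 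In particular the constant $C_{\kappa}'$, built solely from $\kappa$, is insensitive to the discretization and therefore identical to the one appearing in Proposition~\ref{prop:energy-estimate}.

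Finally, I would invoke the integral form of Gronwall's inequality exactly as in the continuous case to reach the claimed bound. I do not anticipate any genuine obstacle here: the only point requiring care is the admissibility of $v_h = \partial_t u_h$ as a test function, which hinges entirely on $X_h$ being a fixed, time-independent conforming subspace of $H^1(D)$. Once this is noted, the discrete proof is a line-by-line repetition of the continuous one, and the homogeneous Neumann case \eqref{eq:weak-neumann-h} follows by simply omitting the boundary term throughout.
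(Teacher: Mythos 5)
Your proposal is correct and follows exactly the route the paper intends: the paper omits an explicit proof of Proposition~\ref{prop:energy-estimate-h}, noting only that "the previous energy considerations also apply to the spatial discretization," and your argument supplies precisely that, with the one genuinely necessary observation being that $v_h=\partial_t u_h$ is admissible because $X_h$ is a fixed, time-independent, conforming subspace. The subsequent estimates and the Gronwall step are indeed line-by-line repetitions of Lemma~\ref{lem:energy-identity-with-excit} and Proposition~\ref{prop:energy-estimate}, with the same constant $C_{\kappa}'$.
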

	%	We then obtain the following energy estimate for both boundary conditions.
	%	\begin{proposition}\label{prop:energy-est-h-source}		Consider the solution of either the system \eqref{eq:weak-absorption-h} or \eqref{eq:weak-neumann-h} with some spatially invariant $\kappa(x,t) = \kappa(t)$. Then, we have the energy estimate
		%		\begin{align*}
			%			\mathcal E_h(t) \le \mathcal E_h(0)\left(
			%			1 +
			%			\int_0^t\frac{| \kappa'(s)\kappa(s)|}{\kappa(0)^2}\mathrm d s\right)
			%			+e^{C_\kappa t}\tfrac{1+t^2}{2} \max_{s\in[0,t]}|| f(s) ||^2_{L^2(D)}.
			%		\end{align*}
		%		
		%		%		\begin{align*}
			%			%			\mathcal E(t)
			%			%			&\le e^{C_{\kappa} t} \left( \mathcal E(0)
			%			%			+ \int_0^t  \frac{(1+s^2)\exp(\pi/2)}{2}|| f(s)||^2_{L^2(D)} \mathrm d s\right),
			%			%		\end{align*}
		%		with the constant 
		%		\begin{align}  \label{ckappa}
			%			C_{\kappa} = \dfrac{2}{T}\int_0^T  \dfrac{\kappa'(s)_+}{\kappa(s)}  \mathrm d s.\end{align}
		%	\end{proposition}
	%\subsection{Characterizations of discrete Floquet exponents $\omega_h$}
	We say $\omega_h$ is a Floquet exponent of the spatially discrete time-modulated system if there exists a complex-valued $u^\omega_h \in H^1_{\text{per}}(0,T;X_h)$, such that $$\Re e^{-i\omega_h t} u_h^{\omega}$$ is a nontrivial solution to the corresponding initial value problem without excitation.
    \drop{
	We have the following result for the Floquet exponents of the system with absorbing boundary conditions.
	\begin{lemma}\label{lem:FL-multipliers-absorption-h}
		Let $\kappa$ fulfill the assumptions \eqref{assumpt-kappa}.
		All Floquet exponents $\omega$ of the system \eqref{eq:time-varying-acoustic-weak-absorbing}, which weakly enforces absorbing boundary conditions, have bounded imaginary part, i.e.,
		\begin{align*}
			\normalfont\Im \omega_h \le {C_{\kappa}'},
		\end{align*}
		with the constant $	{C_{\kappa}'}$ from Proposition~\ref{prop:energy-estimate}.
	\end{lemma}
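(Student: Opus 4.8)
The plan is to reproduce, step for step, the argument behind Lemma~\ref{lem:FL-multipliers-absorption}, substituting the semidiscrete energy estimate of Proposition~\ref{prop:energy-estimate-h} for its continuous counterpart. Concretely, I would assume $\omega_h$ is a discrete Floquet exponent, so that there is a nontrivial $u_h^\omega\in H^1_{\text{per}}(0,T;X_h)$ for which $u_h=\Re e^{-i\omega_h t}u_h^\omega$ solves \eqref{eq:weak-absorption-h} with $f=0$. Writing $\omega_h=\alpha+i\beta$ with $\beta=\Im\omega_h$, the goal is to bound $\beta$ by $C_\kappa'$.

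The key observation is that a Floquet solution has energy growing exactly geometrically over full periods. It is cleanest to work with the complex representative $w_h=e^{-i\omega_h t}u_h^\omega$, which solves the same (linear, real-coefficient) semidiscrete weak formulation and whose real part is $u_h$; the energy identity and estimate transfer to $w_h$ upon testing with $\overline{\partial_t w_h}$. Denoting the associated energy by $\mathcal E_h^{\mathbb C}$ and using that both $u_h^\omega$ and $\kappa$ are $T$-periodic, evaluating at $t=t_0+nT$ makes the oscillatory phase $e^{-i\alpha(t_0+nT)}$ drop out of every squared modulus, leaving
\begin{align*}
	\mathcal E_h^{\mathbb C}(t_0+nT) = e^{2\beta nT}\,\mathcal E_h^{\mathbb C}(t_0),\qquad n\in\mathbb N .
\end{align*}
Here $t_0$ is chosen so that $\mathcal E_h^{\mathbb C}(t_0)>0$; such a $t_0$ exists unless $w_h$ is constant in space and time, which forces $\omega_h=0$ in $\mathbb T$ and hence $\beta=0\le C_\kappa'$ (cf.\ Remark~\ref{rem:const-FL-multiplier}).

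Confronting this exact growth with Proposition~\ref{prop:energy-estimate-h} applied on $[t_0,t_0+nT]$ with $f=0$ — legitimate since, $\kappa$ being $T$-periodic, the estimate and its constant $C_\kappa'$ are invariant under shifting the initial time by a period — gives $\mathcal E_h^{\mathbb C}(t_0+nT)\le e^{C_\kappa'(nT+T)}\,\mathcal E_h^{\mathbb C}(t_0)$. Dividing by $\mathcal E_h^{\mathbb C}(t_0)>0$ yields $e^{2\beta nT}\le e^{C_\kappa'(nT+T)}$ for every $n$; taking logarithms, dividing by $nT$, and letting $n\to\infty$ produces $2\beta\le C_\kappa'$, which in particular implies the claimed bound $\Im\omega_h\le C_\kappa'$.

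I do not expect a serious obstacle, since this is a structural copy of the continuous lemma with the discrete estimate swapped in; the only points demanding care are bookkeeping ones. First, the energy identity is stated for real solutions, so I must verify it carries over to the complex $w_h$ — working with the complex representative is precisely what lets the real-part phase $e^{-i\alpha nT}$ cancel, whereas in the real formulation one would instead have to pass to a subsequence of $n$ on which $|\Re(e^{-i\alpha nT}\cdot)|$ stays bounded below. Second, one must rule out the degenerate case $\mathcal E_h^{\mathbb C}\equiv 0$, which corresponds exactly to the spatially and temporally constant mode with $\omega_h=0$ and is consistent with the claim. Everything else is the same limit computation as in Lemma~\ref{lem:FL-multipliers-absorption}.
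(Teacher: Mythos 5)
Your proposal is correct and follows exactly the route the paper intends: the paper's proof is a one-line reference to the argument of Lemma~\ref{lem:FL-multipliers-absorption} with the semidiscrete energy estimate of Proposition~\ref{prop:energy-estimate-h} swapped in, and you have simply made explicit the geometric growth $\mathcal E_h(t_0+nT)=e^{2\Im\omega_h\, nT}\mathcal E_h(t_0)$ of a Floquet mode, its confrontation with the Gronwall bound, and the degenerate constant mode. Your bookkeeping in fact yields the slightly sharper bound $2\,\Im\omega_h\le C_\kappa'$, which implies the stated one.
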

	\begin{proof}
		We use the argument of Lemma~\ref{lem:FL-multipliers-absorption}, with the bound of Proposition~\ref{prop:energy-estimate-h}.
	\end{proof}
	For homogeneous Neumann boundary conditions, the time-reversed system is structurally identical to the forward problem (with the modulation $\kappa(-t)$). Repetition of the previous argument therefore yields the following result.
	
	\begin{lemma}
		Let $\kappa$ fulfill the assumptions \eqref{assumpt-kappa}.
		All Floquet exponents $\omega$ of the system with homogeneous Neumann boundary conditions \eqref{eq:time-varying-acoustic-weak-neumann} have bounded imaginary part, i.e.
		\begin{align*}
			|	\normalfont\Im \omega_h| \le {C_{\kappa}'},
		\end{align*}
		with the constant $	{C_{\kappa}'}$ from Proposition~\ref{prop:energy-estimate}.
	\end{lemma}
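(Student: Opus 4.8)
The plan is to transport the reasoning of Lemma~\ref{lem:FL-multipliers-neumann} to the semi-discrete setting, replacing the continuous energy estimate by its discrete counterpart in Proposition~\ref{prop:energy-estimate-h}. A discrete Floquet solution is of the form $\Re e^{-i\omega_h t}u_h^\omega$ with $u_h^\omega\in H^1_{\text{per}}(0,T;X_h)$ nontrivial and $f=0$. As in the proof of Lemma~\ref{lem:FL-multipliers-absorption-h}, I would work with the complex representative $w_h(t)=e^{-i\omega_h t}u_h^\omega(t)$, which by linearity solves the homogeneous discrete Neumann problem \eqref{eq:weak-neumann-h}. The energy identity of Lemma~\ref{lem:energy-identity-with-excit}, and hence Proposition~\ref{prop:energy-estimate-h}, carry over to complex-valued solutions upon testing against $\overline{\partial_t w_h}$ and taking real parts, the modulus-squared energy $\mathcal E_h$ being unaffected.

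For the upper bound I would exploit the Floquet structure: since $u_h^\omega$ and $\partial_t u_h^\omega$ are $T$-periodic, both $w_h$ and $\partial_t w_h$ are multiplied by the Floquet multiplier $e^{-i\omega_h T}$ over one period, so that $\mathcal E_h(T)=e^{2\Im\omega_h T}\mathcal E_h(0)$. Nontriviality together with Remark~\ref{rem:const-FL-multiplier} ensures $\mathcal E_h(0)>0$ for every mode other than the constant $\omega=0$, which itself trivially obeys the claimed bound. Comparing this identity with Proposition~\ref{prop:energy-estimate-h} evaluated at $t=T$ (with $f=0$), which gives $\mathcal E_h(T)\le e^{2{C_{\kappa}'}T}\mathcal E_h(0)$, I obtain $e^{2\Im\omega_h T}\le e^{2{C_{\kappa}'}T}$ and therefore $\Im\omega_h\le{C_{\kappa}'}$, exactly as in Lemma~\ref{lem:FL-multipliers-absorption-h}. (Iterating over $n$ periods would in fact sharpen this to ${C_{\kappa}'}/2$, but the stated bound already follows from a single period.)

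For the lower bound I would invoke time reversal. Because the equation contains only $\partial_t^2$, the reflected discrete function $\tilde u_h(x,t)=u_h(x,T-t)$ again solves the homogeneous discrete Neumann problem, now with the reversed modulation $\tilde\kappa(x,t)=\kappa(x,T-t)$, which still satisfies \eqref{assumpt-kappa}; a short computation shows that $\tilde u_h$ is a Floquet solution of this reversed system with exponent $-\omega_h$. Applying the upper-bound step to the reversed system yields $-\Im\omega_h\le{C_{\kappa}'}$, and combining the two inequalities gives $|\Im\omega_h|\le{C_{\kappa}'}$. The step I expect to require the most care is this last one: the constant produced by the reversed problem is $\tfrac{2}{T}\int_0^T\|\kappa'(s)_-/\kappa(s)\|_{L^\infty(D)}\,\mathrm ds$, built from the negative part of $\kappa'$, so one must check that it is controlled by ${C_{\kappa}'}$. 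This is precisely where the $T$-periodicity of $\kappa$ enters — pointwise in $x$ the accumulated increase and decrease of $\log\kappa$ over a full period balance, so the two constants coincide when the time profile of the modulation is spatially homogeneous and are in any case both finite — and it is the only point at which the argument goes beyond a mechanical transcription of the continuous proof of Lemma~\ref{lem:FL-multipliers-neumann}.
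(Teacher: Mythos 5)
Your argument is essentially the paper's own: the paper gives no written proof for this lemma, merely stating that one repeats the argument of Lemma~\ref{lem:FL-multipliers-absorption-h} (the discrete energy estimate of Proposition~\ref{prop:energy-estimate-h} forbids growth faster than $e^{C_{\kappa}' t}$, applied via $\mathcal E_h(T)=e^{2\Im\omega_h T}\mathcal E_h(0)$) for the time-reversed solution $u_h(x,T-t)$, which is exactly your two-step upper-bound/time-reversal scheme. Your closing caveat --- that the reversed modulation yields the constant built from $\kappa'_-$ rather than $\kappa'_+$, and that these need not coincide because the $L^\infty(D)$-norm is taken before the time integral --- is a genuine subtlety the paper silently absorbs into the single constant $C_{\kappa}'$; it is worth recording but does not alter the structure of the proof.
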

	
	\begin{remark}\label{rem:const-FL-multiplier-}
		As long as constants functions are in the Galerkin subspace $X_h$, we note that again $u(x,t)=\chi_D\in X_h$ is always a Floquet solution with $\omega_h = 0\in \mathbb T$.
	\end{remark}
    }
	From the Floquet--Lyapunov theory for ordinary differential equations, we obtain that solutions of this form fully determine the solutions of the associated initial-value problem \eqref{eq:weak-absorption-h}.
	\begin{lemma}[Essentially \textrm{\cite[Corollary 1.5]{K16}}]\label{lem:Floquet}
\JN{		Consider the unique solution to the spatial semi-discretization \eqref{eq:weak-absorption-h} without excitation, that is, $f=0$. Let $\mathcal{F}_h$ denote the set of spatially discrete Floquet exponents, which depend on the Galerkin subspace $X_h$, or the mesh width $h$. For each $\omega_h$ there exists a set of periodic functions $x_{\omega_h,j}\in H^1_{\text{per}}(0,T; X_h)$ for $0\le j\le r$ with some $ r \in \mathbb N_0$, such that
		\begin{align*}
			u_h (t) = \sum_{\omega_h \in \mathcal{F}_h} 
			 e^{-i\omega_h t} \sum_{j \le r} x_{\omega_h,j}(t)t^j.
		\end{align*}
		Floquet exponents $\omega_h$ that are associated with some $r>1$ are exceptional points of the associated monodromy matrix and the coefficients $x_{\omega_h,j}$ are related to the corresponding generalized eigenvectors.}
	\end{lemma}
Such a result is not known for the acoustic wave equation itself.  Studying approximations of Floquet--Bloch solutions of the spatially discrete system is therefore a natural point of interest, since the long time behavior of spatially discrete solutions is determined by Floquet--Bloch solutions whose Floquet exponents have the largest imaginary part.

	\section{The system of coupled harmonics}
    
	Floquet--Bloch solutions of the system \eqref{eq:time-varying-acoustic} are of particular interest in the design of time-dependent metamaterials \cite{YGA22}. Here, we give some results on a classical approach to approximate these special solutions, which uses a harmonic expansion of the periodic function in \eqref{eq:Bloch}. Inserting the Floquet ansatz into the weak formulation with vanishing right-hand side reads: Find $\omega\in \mathbb C$ and a nontrivial $$u^\omega\in V_{\text{per}} =  H^1_{\text{per}}(0,T;L^2(D)) \cap L^2_{\text{per}}(0,T;H^1(D)),$$ such that for all $t\ge 0$ and $v\in H^1(D)$ we have 
	\begin{multline}
    \label{eq:weak-time-Floquet}
				\left(v, (-i\omega+\partial_t)^2  u^{\omega} \right)_{L^2(D)} + \left(\nabla v,  \kappa \nabla u^{\omega}\right)_{L^2(D)} +
			\\ \kappa_0\left( v, \kappa(x,t) (-i\omega+ \partial_t) u^{\omega} \right)_{L^2(\Gamma)} = 0
			%	\left( v,b\right)_{L^2(D)}. \nonumber
		\end{multline}
	This formulation is completed by time-periodic boundary conditions, which yields a quadratic eigenvalue problem on the bounded space-time domain $[0,T]\times D$.
	
	\begin{definition}
		We say that $\omega$ is a Floquet exponent of the system \eqref{eq:weak-time-Floquet}, if there exists a nontrivial $u^\omega\in V_{\text{per}}$ such that \eqref{eq:weak-time-Floquet} holds true. The associated solutions of the form \eqref{eq:Bloch} are referred to as Floquet--Bloch solutions.
	\end{definition}
	%	Consequently, we extend the weak formulation to this space-time domain, which is naturally installed on the space $$V_{\text{per}} = H^1_{\text{per}}(0,T;L^2(D))\cap L^2_{\text{per}}(0,T;H^1(D)),$$
	\noindent On $V_{\text{per}}$, we have the $L^2$- inner product 
	\begin{align}\label{eq:inner-product}
		\left(u, v\right)_{D} 	= \frac{1}{2\pi}\int_0^T \int_D  u(x,t)\overline{v}(x,t)\, \mathrm d x \, \mathrm d t = \sum_{n=-\infty}^\infty (\widehat u_n ,\widehat v_n)_{L^2(D)}.
	\end{align}
	Here, $\widehat u$ and $\widehat v$ denote the temporal Fourier coefficients of $u$ and $v$, respectively. The index $\infty$ corresponds to summation index on the right-hand side, for which we generally write 
    \JN{ 
    \begin{alignat*}{2}
       \left(\varphi,\psi\right)_{K,D} &= \sum_{n=-K}^K\left(\widehat{\varphi}_n,\widehat{\psi}_n\right)_{L^2(D)}
   , \ \
    \left(\varphi,\psi\right)_{D} &&= \sum_{n=-\infty}^\infty\left(\widehat{\varphi}_n,\widehat{\psi}_n\right)_{L^2(D)} ,
\\
    \left(\varphi,\psi\right)_{K,\Gamma} &= \sum_{n=-K}^K\left(\widehat{\varphi}_n,\widehat{\psi}_n\right)_{L^2(\Gamma)}
    , \ \
    \left(\varphi,\psi\right)_{\Gamma} &&= \sum_{n=-\infty}^\infty\left(\widehat{\varphi}_n,\widehat{\psi}_n\right)_{L^2(\Gamma)} .
    \end{alignat*}}
	We therefore arrive at the following formulation: 
    Find $\omega\in \mathbb C$ and $u^\omega\in V_{\text{per}}$, such that 
	\begin{align}\label{eq:weak-Floquet-time}
		\begin{aligned}
			&\left(v, (-i\omega+\partial_t)^2  u^\omega \right)_{D} + \left(\nabla v,  \kappa \nabla u^\omega\right)_{D} 
			\\&+ \kappa_0\left( v, (-i\omega+ \partial_t) u^\omega \right)_{\Gamma} 
			=0
		\end{aligned} 	
        \quad \text{for all}\quad v \in V_{\text{per}}
		. 
	\end{align}
\drop{	Here, we introduced a generally nontrivial right-hand side $b\in V_{\mathrm{per}}$, which may encode an excitation. Time-periodic formulations of this type determine the long-time behavior of the acoustic wave equations with constant coefficients (see, e.g., Appendix~\ref{sect:time-periodic}). For time-periodic modulations, such results are not known to the authors but still motivate the treatment of this slightly more general case. Constructing Floquet--Bloch solutions is then simply the study of singularities of the associated resolvent, as with standard time-harmonic techniques for wave equations with temporally constant coefficients.}
	The physical parameter $\kappa$ is $\Omega$-modulated and assumed to be fully given by a truncated Fourier expansion, i.e.,
	\begin{align*}
		A(x,t) =	\sum_{n=-K}^{K} \widehat{A}_n(x) e^{-i n\Omega t}, \quad \text{where} \quad \widehat A_n(x) \varphi = 
		-\nabla \cdot \left( \widehat{\kappa}_n(x)\nabla  \varphi \right).
	\end{align*}
	It is therefore natural and equivalent to formulate \eqref{eq:weak-time-Floquet} in terms of the Fourier coefficients, i.e., in the coefficients $\widehat{u}^\omega \in h^1(H^1(D))$, which determine $$u^\omega = \sum_{n=-\infty}^\infty \widehat u^\omega_n e^{-in\Omega t}.$$ Here, we used the sequence spaces $h^k(V)$, which are defined for $k\in \mathbb Z$ by
	\begin{align*}
		h^k(V) = \left\{ \left( \widehat u_n\right)_{n=-\infty}^\infty  | \sum_{n=-\infty}^\infty (1+|n|)^k \left\| \widehat u_n \right\|^2_V <\infty \right\}.
	\end{align*}  On these spaces, we define the operators $$\mathcal D \, \colon \, h^{k+1}(V) \rightarrow  h^{k}(V)\quad\text{and}\quad \mathcal{T}_\kappa \,\colon \,  h^{k}(V)\rightarrow h^{k}(V),$$
	which correspond to the temporal derivative $\partial_t$ and the modulation $\kappa(x,t)$ in the frequency domain and are determined through the expressions
	\begin{align*}
		\mathcal D \widehat{u} = \left( - in\Omega \widehat{u}_n\right)_{n\in \mathbb Z},
		\quad \mathcal T_\kappa \widehat{u} = \left(\sum_{m=-\infty}^\infty\widehat{\kappa}_{n-m}(x) \widehat u_{m}(x)\right)_{n\in \mathbb Z}.
	\end{align*}
	With this notation, we are in the position to formulate \JN{\eqref{eq:weak-Floquet-time} in the frequency domain}, for which we use the auxiliary space 
	$$\widehat{V}_{\mathrm{per}} = h^1(L^2(D))\cap h^0(H^1(D)).
	$$ Find $\omega\in \mathbb C$ and nontrivial $\widehat{u}^{\omega} \in \widehat{V}_{\mathrm{per}}\setminus \{0\}$, such that 
	\begin{align}\label{eq:weak-Floquet-freq}
		a_\omega(\widehat{u}^{\omega}, \wv) = 0, \quad \forall \wv\in \widehat{V}_{\mathrm{per}},
	\end{align}
	where the bilinear form $a_\omega \, \colon \widehat{V}_{\mathrm{per}}\times \widehat{V}_{\mathrm{per}} \rightarrow \mathbb C$ is given by
	\begin{align}\label{eq:weak-Floquet-forms}
		\begin{aligned}
			a_\omega(\wu, \wv) & = \left(\widehat{v}, (-i\omega+\mathcal D)^2  \wu \right)_{D} + \left(\nabla \widehat{v},  \mathcal T_\kappa \nabla \wu\right)_{D} 
			+ \kappa_0\left(\widehat{v},(-i\omega+ \mathcal D) \wu \right)_{\Gamma}.
		\end{aligned} 	
	\end{align}
	
	We note that here, we extend the gradient $\nabla$ naturally to the sequence space $\widehat{V}_{\mathrm{per}}$ by identifying it with the pointwise application to each sequence element.
%	\begin{remark}[On well-posedness]
%		We note that the time-periodic weak formulations \eqref{eq:weak-Floquet-time}--\eqref{eq:weak-Floquet-freq} are not generally expected to be well-posed, for a given $\omega$. In fact, we are especially interested in $\omega\in \mathbb T$, where the operator on the left-hand side has a nontrivial kernel, which corresponds to the existence of Floquet solutions with a vanishing right-hand side  (see, e.g., \cite{YGA22}). 
%	\end{remark}
	\begin{remark}
		We note that the corresponding formulation of the Neumann boundary condition is \eqref{eq:weak-Floquet-freq}, with the exception of the boundary form, which vanishes for homogeneous Neumann boundary conditions.
	\end{remark}
	%	Inserting this Ansatz into \eqref{eq:time-varying-acoustic} yields the formulation
	%	\begin{align}\label{eq:time-varying-acoustic-scat-2}
		%		\left( -i\omega + \frac{\partial}{\partial t} 
		%		\right)^2u^{}_\omega
		%		+A(x,t) u^{}_\omega
		%		= f(x,t). 
		%	\end{align}
	%	\begin{align}\label{eq:time-varying-acoustic-scat-2}
		%		\left( -i\omega + \frac{\partial}{\partial t} 
		%		\right)^2u^{}_\omega
		%		-  \nabla \cdot \kappa(x,t)\nabla u^{}_\omega
		%		= f(x,t). 
		%	\end{align}
	
	%	 that $u_\omega- u^{\text{inc}}_{\omega}$ are time-periodic radiating Helmholtz solutions in the exterior domain $\mathbb R^{d}\setminus D$.
	\subsection{Frequency-truncated weak formulation} \label{sect:trunc}
	We discretize the weak formulation, by restricting the temporal weak formulation \eqref{eq:weak-Floquet-time} to the Galerkin subspace $$V_{K}= \text{span}\left\{ \phi(x)e^{-in\Omega t} \,\, |\,\,\phi \in H^1(D) , \, n \in \mathbb Z \, , \, |n|\le K\right\}.$$

	For any $K$, we denote by $(\cdot,\cdot)_{K,D}$ the canonical extension of the $L^2(D)$ pairing onto the sequence space, via \eqref{eq:inner-product}. For finite $K$, we naturally have $\VK\cong H^1(D)^{2K+1}$. For truncated functions
	\begin{align*}
		\uK(x,t) =
		\sum_{n=-K}^{K} \wuK_n(x) e^{-i n\Omega t},\quad
		\vK(x,t) =
		\sum_{n=-K}^{K} \wvK_n(x) e^{-i n\Omega t}  ,
	\end{align*}
    \JN{where we collect these Fourier coefficients in the vector
	$$
	\wuK  = \left(\widehat u_n \right)_{|n|\le K} .
	$$
	Parseval's formula gives that for the Fourier coefficients $\wuK,\wvK \in \VK \cong H^1(D)^{2K+1}$ we have}
	\begin{align}\label{eq:parseval}
		\left(\wuK,\wvK\right)_{K,D} =\left(\wuK,\wvK\right)_{D }	= \frac{1}{2\pi}\int_0^T \int_D \uK(x,t)\overline{\vK(x,t)}\, \mathrm d x \, \mathrm d t.
	\end{align}
	%	\begin{align*} 
		%		\kappa(x,t) = \frac{c^2}{n(x,t)^2} = 	\sum_{n=-K}^{K} \widehat{\kappa}_n(x) e^{-i n\Omega t}.
		%	\end{align*}
	%\begin{align*}
	%	\eta(t) = 	\sum_{n=-\infty}^{\infty} \widehat{\eta}_n(x) e^{-i n\Omega t}.
	%\end{align*}
	%	Inserting these explicit modulations into \eqref{eq:time-varying-acoustic-scat-2} yields, for all $|n|\le K$, the systems
	%	\begin{align}\label{eq:time-harmonic-1}
		%		-k_n^2\widehat{u}_n
		%		+  \sum_{m=-K}^K \widehat{A}_n \widehat{u}_{m-n}
		%		= \widehat{f}_n, 
		%	\end{align}	
	%	where $k_n = \omega+n\Omega $. 

	We obtain the following truncated problem formulation of (\ref{eq:weak-Floquet-freq}) in the Fourier domain: Find $\omega_K\in \mathbb C$ and $\widehat{u}_K^{\omega} \in \widehat{V}_K$, such that for all $\widehat v\in \widehat{V}_K$, we have
	\begin{align}\label{eq:weak-Floquet-freq-K}
		\begin{aligned}
			&\left(\widehat{v}, (-i\omega_K+\mathcal D)^2  \widehat{u}_K^{\omega} \right)_{K,D} + \left(\nabla \widehat{v},  \mathcal T_\kappa \nabla \widehat{u}_K^{\omega}\right)_{K,D} 
			\\&+ \kappa_0\left(\widehat{v}, (-i\omega_K+ \mathcal D) \widehat{u}_K^{\omega} \right)_{K,\Gamma} 
			= 0
			.
		\end{aligned} 	
	\end{align}
    \JN{We note that the entries of $\widehat{u}_K^{\omega}$ are denoted by $(\widehat{u}_K^{\omega})_n$ for $|n|\le K$. }
	%	Moreover, we introduce the diagonal operator $D$ and the self-adjoint Toeplitz operator $T_A$, corresponding to the first and second summand of \eqref{eq:time-harmonic-1}, which act on this space of truncated sequences and are given by
	%	\begin{align*}
		%		D \varphi = \left( k_n^2 \varphi_n\right)_{|n|\le K} \quad \text{and}\quad  T_A \varphi = \left(\sum_{m=-K}^K \widehat{A}_n \widehat{u}_{m-n} \right)_{|n|\le K} .
		%	\end{align*}
	%	The coupled system can then be written as the following strong formulation: Find $\widehat u$, such that 
	%	\begin{align}\label{eq:weak-formulation-K}
		%		-D\widehat u + T_A \widehat u = \widehat f.
		%	\end{align}
	%	
	\begin{remark}
		This coupled system is a typical formulation in the engineering literature and is sometimes referred to as "harmonic balancing" (see, e.g. \cite{RMA03,WH90,YGA22}).
	\end{remark}
    
	\subsection{Weak formulation} 	
	\JN{We collect the left-hand side of \eqref{eq:weak-Floquet-freq-K} in the bilinear form:}
	\begin{align*}
		a^K_\omega(\wu, \wv ) &= \left(\widehat{v}, (-i\omega+\mathcal D)^2  \widehat{u} \right)_{K,D} + \left(\nabla \widehat{v},  \mathcal T_\kappa \nabla \widehat{u}\right)_{K,D} 
		+ \kappa_0\left(\widehat{v}, (-i\omega+ \mathcal D) \widehat{u} \right)_{K,\Gamma} .
		\end{align*}
	%\begin{align*}
	%		a_\omega(\wu, \wv )= - \left(\widehat v , D \widehat u\right)_K + \left(\widehat v, T_A \widehat u \right)_K, \quad \quad l_K(\widehat v ) = \left(\widehat v, \widehat f \right)_K.
	%\end{align*}
	With \JN{this form}, we can write the weak formulation as: Find $\omega_K\in \mathbb C$ and $\wuK \in \widehat{V}_K$, such that
	\begin{align}\label{eq:weak-formulation-K}
		a^K_{\omega_K}(\wuK, \wv) = 0, \quad \forall \wv\in \VK.
	\end{align}
	We note that, by construction of the truncated system, we have the following bound.
	
	\begin{remark}
		\JN{Consider a regular function $ u^\omega\in C_{\text{per}}^p(0,T;H^1(D))$ whose Fourier coefficients $\widehat{u}^\omega$ fulfill  \eqref{eq:weak-Floquet-freq}, with $\omega\in \mathbb C$.  For $\kappa\in C_{\text{per}}^p(0,T;C^{\infty}(D))$, we then obtain, for any $\widehat v \in \VK$ that is normalized in the sense that $\|\widehat v \|^2_{K,D}+\|\nabla\widehat v \|^2_{K,D} = 1$
		\begin{align*}
			 a_{\omega}^K(\widehat{u}^\omega, \widehat{v}) = 
    	 a_{\omega}^K(\widehat{u}^\omega, \widehat{v})-	 a_{\omega}(\widehat{u}^\omega, \widehat{v})
            =-
           \sum_{|n|>K}\sum_{m=-K}^K \left( \nabla \widehat{v}_m, \widehat{\kappa}_{n-m} \nabla \widehat{u}^{\omega}_{m} \right)_{L^2(D)}
		\end{align*}
        Using the triangle inequality and the decay in the Fourier coefficients of $u^\omega$ and $\kappa$ that is implied by their temporal regularity and periodicity gives
       		\begin{align*}
			 |a_{\omega}^K(\widehat{u}^\omega, \widehat{v}) | 
           %  &\le 
           % \sum_{|n|>K}\sum_{m=-K}^K |\left( \nabla \widehat{v}_m, \widehat{\kappa}_{n-m} \nabla \widehat{u}^{\omega}_{m} \right)_{L^2(D)}|
          &\le
           C\sum_{|n|>K}\sum_{m=-K}^K \|\nabla \widehat{v}_m \|_{L^2(D)} (n-m)^{-p}(1+|m|)^{-p}   \| u^{\omega}_{m} \|_{C^p(0,T;H^1(D))}
            \\ &
            \le C K^{-p}\| u^{\omega}_{m} \|_{C^p(0,T;H^1(D))}.
		\end{align*}
		Here, $C_{\text{per}}^p(0,T;H^1(D))$ denotes the set of functions of class $C^p$ that are periodic in $t$ with values in $H^1(D)$.} 
		%		Conversely, any solution $\widehat{u}^\omega_K\in \VK$ fulfills 
		%		\begin{align*}
			%			\sup_{\|\widehat v \|_{H_\infty} = 1} | a_{\omega}(\widehat{u}_K^\omega, \widehat{v}) | \le \dfrac{C}{K^{p-1}}.
			%		\end{align*}	
	\end{remark}
	
	In the following, we show some basic properties of the sesquilinear form $a^K_\omega$. 
	\begin{lemma}[Continuity of $a^K_\omega$]\label{lem:bound-a}
		\JN{The bilinear form $a^K_\omega\, \colon \, \VK\times \VK \rightarrow \mathbb C$, fulfills the bound 
		%which are continuous, complex-valued forms on the spaces
		%\begin{align}\label{eq:sesq-ak}
		%	a_\omega \, \colon \, \VK\times \VK \rightarrow \mathbb C, \quad l \, \colon \VK \rightarrow \mathbb C.
		%\end{align}
		\begin{align*}
			| a^K_\omega(\wu, \wv)| \le C (K+|\omega|)^2 \left(\|\wu \|_{K,D} \| \wv \|_{K,D} +  \|\nabla \wu \|_{K,D} \| \nabla \wv \|_{K,D} \right) , 
		\end{align*}
		for all $\wu,\wv\in \VK$. The constant $C$ depends on $\kappa$ and the boundary through the trace theorem.}
  %       with the constant  $$C_\kappa = \esssup_{x\in D, t \in [0,T]} \kappa(x,t),$$
		% from \eqref{assumpt-kappa}, as well as the constant from the trace theorem $$C_\Gamma= \max \bigg(|| \gamma||_{H^{1/2}(\Gamma)\leftarrow H^1(D)} \, , \, \| \gamma\|_{H^{-1/2}(\Gamma)\leftarrow L^2(D)} \bigg).$$
	\end{lemma}
	\begin{proof}
		We start with the triangle inequality and the Cauchy--Schwarz inequality to estimate the first summand
		\begin{align*}
			|a^K_\omega(\wu, \wv )| &\le (K+|\omega|)^2 \|\wu \|_{K,D} \| \wv \|_{K,D} + |\left(\nabla\widehat v, \mathcal T_\kappa \nabla\widehat u \right)_{K,D}|
			\\&+\kappa_0| \left(\widehat{v}, (-i\omega+ \mathcal D) \widehat{u} \right)_{K,\Gamma} |
			.
		\end{align*}
		For the second term, we use the Green's formula and Parseval's formula, to obtain
		\begin{align*}
			| \left(\nabla\widehat v, \mathcal{T}_\kappa \nabla\widehat u \right)_{K,D} | 
			&
			% =  \sum_{n=-K}^K |\left(\widehat v_n, \left(T_A \widehat u \right)_n\right)_{L^2} |
			= \frac{1}{2\pi}
			\left| \int_{0}^T
			\int_D \kappa (x,t) \nabla \overline{v}(x,t) \cdot \nabla u(x,t)
			\, \mathrm d t
			\right| 
			\\ & \le 
			\frac{C_\kappa}{2\pi}
			\int_{0}^T
			\| \nabla v(t) \|_{L^2(D)} \| \nabla u(t) \|_{L^2(D)}
			\, \mathrm d t
			\\ & \le 
			\frac{C_\kappa}{2\pi} \int_{0}^T \frac{\rho}{2}
			\| \nabla v(t) \|^2_{L^2(D)} + \frac{1}{2\rho}\| \nabla u(t) \|^2_{L^2(D)}
			\, \mathrm d t . 
		\end{align*}
		The final estimate is obtained by using the generalized Young's inequality, which holds for all positive $\rho>0$. To conclude the estimation of the second term, we apply Parseval's inequality on the right-hand side again, which yields  
		\begin{align*}
			| \left(\nabla\widehat v, \mathcal{T}_\kappa \nabla\widehat u \right)_{K,D}  | 
			\le
			C_\kappa \left(
			\frac{\rho}{2}
			\| \nabla \wv \|^2_{K,D}  + \frac{1}{2\rho}\| \nabla \wu \|^2_{K,D}
			\right).
		\end{align*}
		The estimate for the second summand is obtained by $\rho = \| \nabla \wu \|_{K,D}\| \nabla \wv \|^{-1}_{K,D}$. (Note that the case $\| \nabla \wv \|_{K,D} = 0$ is trivial).
		
\JN{		For the final summand, which derives from the boundary terms, we obtain with the trace theorem
		\begin{align*}
			| \left(\widehat{v}, (-i\omega+ \mathcal D) \widehat{u} \right)_{K,\Gamma} |
			&\le (|\omega|+K)\ C_\Gamma^2 \left(\left\| \widehat{v}\right\|_{K,D}+\left\| \nabla\widehat{v}\right\|_{K,D}\right)\left( \left\| \widehat{u}\right\|_{K,D}+ \left\| \nabla \widehat{u}\right\|_{K,D}\right)
			%		\\ & \le 
			%	\left( 2(|\omega|+K)C_\kappa C_\Gamma^2\right)^2 \left\| \widehat{v}\right\|_{K,D} \left\| \widehat{u}\right\|_{K,D}
			%	\\ & + 
			%		\left( (|\omega|+K) C_\Gamma^2\right)^2 \left\| \widehat{v}\right\|_{K,D}^2 + C_\kappa^2\left\| \nabla \widehat{u}\right\|^2_{K,D}
			.
		\end{align*}}

	\end{proof}
	Moreover, we have the following lower bound.
	\begin{lemma}[G\aa rding inequality]\label{lem:garding-a}
		We have the estimate
		\JN{\begin{align*}
			\mathrm{Re} \, \, a^K_\omega(\wu, \wu) \ge 
			\dfrac{c_\kappa}{2} \|\nabla \wu \|^2_{K,D} - C_\Gamma(K+|\omega|)^2 \|\wu \|^2_{K,D}+\kappa_0 \,\normalfont{\Im} \omega \,  \|\widehat{u} \|^2_{K,\Gamma},
		\end{align*}}
		\noindent with the constant $	c_\kappa$ from \eqref{assumpt-kappa}, for all $\widehat u \in \VK$. The constant $C_\Gamma$ depends on the boundary by the trace theorem and is defined in Lemma~\ref{lem:bound-a}.
	\end{lemma}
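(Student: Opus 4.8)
The plan is to test the sesquilinear form against $\wv=\wu$ and to estimate the real part of each of its three contributions separately, exploiting that $\mathcal D$, and hence $(-i\omega+\mathcal D)$, act diagonally on the Fourier index: on the $n$-th component they are multiplication by $-in\Omega$ and by $-i(\omega+n\Omega)$, respectively. Only the modulation operator $\mathcal T_\kappa$ couples the modes, but by Parseval's identity \eqref{eq:parseval} the associated term can be rewritten as an honest physical-space integral, exactly as in the proof of Lemma~\ref{lem:bound-a}.

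I would first dispose of the two volume terms. For the time-derivative term, diagonality gives $\big(\wu,(-i\omega+\mathcal D)^2\wu\big)_{K,D}=-\sum_{|n|\le K}\overline{(\omega+n\Omega)^2}\,\|\wu_n\|_{L^2(D)}^2$, whose real part is $-\sum_{|n|\le K}\mathrm{Re}\big((\omega+n\Omega)^2\big)\|\wu_n\|_{L^2(D)}^2$. Since $\mathrm{Re}\big((\omega+n\Omega)^2\big)=(\mathrm{Re}\,\omega+n\Omega)^2-(\mathrm{Im}\,\omega)^2\le(|\omega|+K\Omega)^2$, this contribution is bounded below by $-C(K+|\omega|)^2\|\wu\|^2_{K,D}$, with $\Omega$ absorbed into $C$. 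For the modulated gradient term, setting $\wv=\wu$ in the Parseval computation of Lemma~\ref{lem:bound-a} produces the real, nonnegative quantity $\tfrac{1}{2\pi}\int_0^T\int_D\kappa(x,t)\,|\nabla u_K(x,t)|^2\,\mathrm dx\,\mathrm dt$, which by the uniform lower bound $\kappa\ge c_\kappa$ from \eqref{assumpt-kappa} is at least $c_\kappa\|\nabla\wu\|^2_{K,D}$.

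The boundary term is where I expect the real difficulty. Its real part is $\kappa_0\,\mathrm{Im}(\omega)\sum_{|n|\le K}\|\wu_n\|^2_{L^2(\Gamma)}$, because $\mathrm{Re}\big(\overline{-i(\omega+n\Omega)}\big)=\mathrm{Im}\,\omega$ independently of $n$. When $\mathrm{Im}\,\omega\ge 0$ this is nonnegative and can simply be discarded; the awkward case is $\mathrm{Im}\,\omega<0$, where I must absorb $\kappa_0|\mathrm{Im}\,\omega|\sum_n\|\wu_n\|^2_{L^2(\Gamma)}$ into the coercive gradient energy. Here the plain trace estimate behind $C_\Gamma$ is not enough, since the prefactor $|\mathrm{Im}\,\omega|$ is unbounded and, applied to $\|\nabla\wu\|^2_{K,D}$, would destroy the coefficient of the gradient norm. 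The remedy I would use is the scaled trace inequality $\|w\|^2_{L^2(\Gamma)}\le\varepsilon\|\nabla w\|^2_{L^2(D)}+C_\varepsilon\|w\|^2_{L^2(D)}$ (a consequence of the multiplicative trace inequality on the smooth domain $D$, with $C_\varepsilon\sim\varepsilon^{-1}$), applied to each $\wu_n$ with the adaptive choice $\varepsilon=c_\kappa/(2\kappa_0|\mathrm{Im}\,\omega|)$. This converts the gradient part into exactly $\tfrac{c_\kappa}{2}\|\nabla\wu\|^2_{K,D}$, while the residual $L^2(D)$ part carries a prefactor $\kappa_0|\mathrm{Im}\,\omega|\,C_\varepsilon=O(|\mathrm{Im}\,\omega|^2)\le C(K+|\omega|)^2$.

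Finally I would collect the three estimates: the $c_\kappa\|\nabla\wu\|^2_{K,D}$ delivered by the gradient term, minus the $\tfrac{c_\kappa}{2}\|\nabla\wu\|^2_{K,D}$ consumed by the boundary term, leaves precisely $\tfrac{c_\kappa}{2}\|\nabla\wu\|^2_{K,D}$, and all the $\|\cdot\|^2_{K,D}$ contributions amalgamate into a single negative term of order $(K+|\omega|)^2$, giving the claimed inequality once the generic boundary-dependent constant is renamed $C_\Gamma$. The only genuine obstacle is the $\mathrm{Im}\,\omega<0$ boundary contribution, and it is overcome by trading a fixed fraction of the coercive gradient energy through the scaled trace inequality rather than the plain trace theorem.
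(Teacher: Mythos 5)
Your proof is correct, and for the two volume terms it coincides with the paper's argument (Parseval plus the lower bound $\kappa\ge c_\kappa$ for the gradient term; a bound of size $(K+|\omega|)^2$ on the $(-i\omega+\mathcal D)^2$ term, which you obtain by the exact diagonal computation where the paper simply uses Cauchy--Schwarz). The genuine divergence is in the boundary term. The paper does not compute its real part: it estimates $|\left(\wu,(-i\omega+\mathcal D)\wu\right)_{K,\Gamma}|$ by the $H^{1/2}(\Gamma)$--$H^{-1/2}(\Gamma)$ duality pairing from Lemma~\ref{lem:bound-a}, which places the full $H^1(D)$ burden on only one factor and yields a bound of the form $C(K+|\omega|)\,\|\wu\|_{K,D}\bigl(\|\wu\|_{K,D}+\|\nabla\wu\|_{K,D}\bigr)$; Young's inequality then splits off the $\tfrac{c_\kappa}{2}\|\nabla\wu\|^2_{K,D}$ piece to be absorbed, leaving an $O\bigl((K+|\omega|)^2\bigr)$ multiple of $\|\wu\|^2_{K,D}$. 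You instead observe that the real part of the boundary term equals $\kappa_0\,\mathrm{Im}(\omega)\sum_{|n|\le K}\|\wu_n\|^2_{L^2(\Gamma)}$, discard it when $\mathrm{Im}\,\omega\ge 0$, and otherwise absorb it via the scaled (multiplicative) trace inequality with $\varepsilon\sim c_\kappa/(\kappa_0|\mathrm{Im}\,\omega|)$. Both routes trade half the gradient energy for an $O\bigl((K+|\omega|)^2\bigr)$ penalty on $\|\wu\|^2_{K,D}$, so neither is stronger where the stated lemma is concerned; your version is structurally sharper in that it shows the negative boundary contribution is controlled by $|\mathrm{Im}\,\omega|^2$ alone (independent of $K$) and is entirely absent for $\mathrm{Im}\,\omega\ge0$, while the paper's version avoids the case split and reuses the continuity estimate verbatim. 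One small caveat: your remark that ``the plain trace estimate behind $C_\Gamma$ is not enough'' is not quite fair to the paper, since the duality form of the trace theorem (one factor measured in $H^{-1/2}(\Gamma)$, hence controlled by $\|\cdot\|_{L^2(D)}$ only) accomplishes the same absorption without the scaled trace inequality; the obstacle you identify only arises if one puts both factors in $H^{1/2}(\Gamma)$.
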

	\begin{proof}
		The proof uses similar arguments as those in Lemma~\ref{lem:bound-a}. We start again by the Cauchy--Schwarz inequality to obtain 
		\begin{align*}
			\mathrm{Re} \, \,\, a^K_\omega(\wu, \wu) \ge	
			\Re\left(\nabla \wu, \mathcal T_\kappa \nabla \widehat u \right)_{K,D}
			-(K+|\omega|)^2 \|\wu \|^2_{K,D} +\JN{ \Re \left(\widehat{u},  (-i\omega+ \mathcal D) \widehat{u} \right)_{K,\Gamma}  }  .
		\end{align*}
		The first summand is estimated by using the Parseval's formula, which yields
		\begin{align*}
			\Re\left(\nabla \wu, \mathcal T_\kappa \nabla \widehat u \right)_{K,D}
			&= \frac{1}{2\pi} \int_{0}^T
			\int_D \kappa(x,t) \nabla\overline{u}(x,t) \cdot\nabla u(x,t) \,\mathrm dx \, \mathrm dt
			\\ &\ge
			\frac{c_{\kappa}}{2\pi}  \int_{0}^T
			\int_D |\nabla u(x,t)|^2
			\,\mathrm dx \, \mathrm dt.
		\end{align*}
		\JN{A final application of Parseval's formula yields the stated result, up to the boundary terms. Finally, we have
        \begin{align*}
         \Re\left(\widehat{u},  (-i\omega+ \mathcal D) \widehat{u} \right)_{K,\Gamma} = \Im \omega \|\widehat{u} \|^2_{K,\Gamma},
        \end{align*}
        which gives the stated result.
        }
	\end{proof}
    \begin{remark}
   \JN{ For $\normalfont{\Im} \omega \ge 0$, the boundary term in the G\aa rding inequality can be neglected. The case $\normalfont{\Im} \omega < 0 $ corresponds for $K=0$ to a standard Helmholtz problem with non-absorbing boundary conditions. The boundary term in Lemma~\ref{lem:garding-a} is then still compact, since by the trace theorem \cite[Theorem 2.6.8]{SS11}, we have 
   \begin{align*}
       \| \widehat{u} \|^2_{K,\Gamma}
       \le 
       \sum_{n=-K}^K \| \widehat{u}_n \|^2_{H^{s}(\Omega)} \quad \normalfont{\text{for }}s>1/2.
   \end{align*}}
    \end{remark}

	These results provide the necessary conditions for applying Fredholm alternative arguments to an associated time-harmonic scattering problem.
    
	\begin{proposition}\label{prop:resonance}
		Either, the weak formulation
        \begin{align}\label{eq:coupled-system-steady-state}
            	a^K_\omega(\wu, \wv ) &= l(\wv), \quad  \forall \wv \in \VK
        \end{align}
        is uniquely solvable for all bounded linear forms $l \colon \VK\rightarrow \mathbb C$, or there exists a finite-dimensional subspace $E\subset \VK$, such that for $\wu \in E$ we have
		\begin{align*}  
			a^K_\omega(\wu,\wv) = 0,  \quad \forall \wv \in \VK.
		\end{align*}
		If $E$ is nontrivial, then the system \eqref{eq:weak-formulation-K} has at least one solution if and only if $\widehat b \perp E$.
	\end{proposition}
	\begin{proof}
    \	The results of Lemmas~\ref{lem:bound-a}--\ref{lem:garding-a} give the conditions of Fredholm--Riesz--Schauder theory (see, e.g., \cite[Theorem 2.1.60]{SS11}), which implies the stated result. 
	\end{proof}
    \begin{remark}
        Time-harmonic systems of the form \eqref{eq:coupled-system-steady-state} have been used to compute the frequency response of dynamical systems with periodic coefficients \cite{WH90} and are actively used to investigate the long-time behavior of wave scattering from time-dependent metamaterials \cite{HD24}.
    \end{remark}
    
	We obtain the following result on the set of (truncated) resonant quasi-frequencies.
	\begin{theorem}\label{thm:discrete}
		The set of truncated Floquet exponents $\omega_K$ is a discrete subset of $\mathbb C$, which does not contain an accumulation point. 
	\end{theorem}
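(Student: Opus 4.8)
The plan is to recast the solvability of \eqref{eq:weak-formulation-K} as the invertibility of an operator-valued analytic function and then invoke the analytic Fredholm theorem. Via the Riesz isomorphism on the Hilbert space $V_K \cong H^1(D)^{2K+1}$, equipped with its natural $H^1$-type norm $\|\cdot\|_{V_K}$, the sesquilinear form $a^K_\omega$ induces a bounded linear operator $A(\omega)\colon V_K \to V_K$ representing it, i.e.\ $a^K_\omega(\wu,\wv) = (A(\omega)\wu,\wv)_{V_K}$; by Lemma~\ref{lem:bound-a} this operator is well defined, and since $(-i\omega+\mathcal D)^2$ and the boundary contribution depend polynomially on $\omega$, the map $\omega \mapsto A(\omega)$ is an entire, operator-valued quadratic polynomial. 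The resonant quasi-frequencies $\omega_K$ are precisely those $\omega$ for which $A(\omega)$ fails to be injective, which by Proposition~\ref{prop:resonance} is equivalent to the failure of $A(\omega)$ to be boundedly invertible.

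First I would exhibit a single $\omega_0$ at which $A(\omega_0)$ is invertible. The generic Gårding inequality of Lemma~\ref{lem:garding-a} does not suffice here, since its right-hand side degrades like $-(K+|\omega|)^2\|\wu\|_{K,D}^2$. Instead I would choose $\omega_0 = i\tau$ purely imaginary with $\tau>0$ and re-examine the form directly: the operator $(-i\omega_0+\mathcal D)^2$ acts on the $n$-th harmonic by multiplication with $-(i\tau+n\Omega)^2$, whose real part equals $\tau^2 - n^2\Omega^2 \ge \tau^2 - K^2\Omega^2$ on $V_K$, while the dissipative boundary term $\kappa_0(-i\omega_0+\mathcal D)$ contributes the nonnegative quantity $\kappa_0\tau\|\wu\|^2_{K,\Gamma}$, and the modulation term contributes $c_\kappa\|\nabla\wu\|^2_{K,D}$ exactly as in the proof of Lemma~\ref{lem:garding-a}. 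Choosing $\tau$ large enough that $\tau^2 - K^2\Omega^2 \ge c_\kappa$ then yields coercivity
\begin{align*}
\mathrm{Re}\, a^K_{i\tau}(\wu,\wu) \ge c\,\|\wu\|^2_{V_K},
\end{align*}
so that $A(i\tau)$ is invertible by the Lax--Milgram theorem.

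Next I would verify that the $\omega$-dependent part is a compact perturbation. Writing $C(\omega) := A(\omega) - A(\omega_0)$, the modulation term $(\nabla\wv,\mathcal T_\kappa\nabla\wu)_{K,D}$ is $\omega$-independent and cancels, so the difference consists only of the $L^2(D)$-pairings arising from $(-i\omega+\mathcal D)^2$ and the $L^2(\Gamma)$-pairings arising from the boundary term. Both are compact with respect to the $H^1$-topology of $V_K$: the former because the embedding $H^1(D)\hookrightarrow L^2(D)$ is compact, the latter because the trace operator $H^1(D)\to L^2(\Gamma)$ is compact. Hence $C(\omega)$ is compact for every $\omega$, and $\omega\mapsto C(\omega)$ is an entire, compact-operator-valued function. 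Factoring $A(\omega) = A(\omega_0)\bigl(I + A(\omega_0)^{-1}C(\omega)\bigr)$ and setting $F(\omega) := A(\omega_0)^{-1}C(\omega)$, the map $\omega\mapsto F(\omega)$ is an entire, compact-valued function, and $A(\omega)$ is invertible if and only if $I+F(\omega)$ is.

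The conclusion then follows from the analytic Fredholm theorem applied to $I+F(\omega)$ on the connected set $\mathbb C$: either $I+F(\omega)$ is invertible for no $\omega$, or it is invertible for all $\omega$ outside a discrete set without accumulation point. Since $I+F(\omega_0)=I$ is invertible, the first alternative is excluded, and the set of resonant quasi-frequencies $\omega_K$, being the complement of the set of invertibility, is discrete and has no accumulation point in $\mathbb C$. The main obstacle is the coercivity step: one cannot merely quote Lemma~\ref{lem:garding-a}, but must re-run the estimate at an imaginary $\omega_0$ so that the otherwise indefinite temporal term $-(\omega+n\Omega)^2$ and the boundary term both acquire a favourable sign; once a point of invertibility is secured, the remaining ingredients are the standard compact-embedding arguments and the analytic Fredholm theorem (see, e.g., \cite{SS11}).
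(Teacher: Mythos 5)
Your proposal is correct and follows essentially the same route as the paper: establish the Fredholm/compact-perturbation structure from the continuity and G\aa rding estimates of Lemmas~\ref{lem:bound-a}--\ref{lem:garding-a}, secure invertibility at a single purely imaginary $\omega_0$ by rerunning the coercivity estimate and applying Lax--Milgram, and conclude via the analytic (meromorphic) Fredholm theorem. You merely make explicit what the paper leaves implicit, namely the compactness of the $\omega$-dependent part through the compact embeddings $H^1(D)\hookrightarrow L^2(D)$ and $H^1(D)\to L^2(\Gamma)$, and the sign computation at $\omega_0=i\tau$ behind the paper's choice $\omega=i(K+1)$.
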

	%	\begin{theorem}\label{thm:discrete}
		%		Consider the set of eigenvalues of the truncated coupled harmonics, which contains those $\omega_K\in \mathbb C$,  for which we find a nontrivial $\wuK^\omega\in \VK$ such that \eqref{eq:time-varying-acoustic-weak-absorbing} (or \eqref{eq:time-varying-acoustic-weak-neumann} respectively) hold. Then, we have the following results.
		%		\begin{itemize}
			%		\item 		The truncated resonant quasi-frequencies $\omega_K$ are a discrete subset of $\mathbb C$, which does not contain an accumulation point. 
			%		\item 		In the first Brillouin zone $\mathbb T_1= (-\Omega/2,\Omega/2]$, only finitely many truncated Floquet exponents correspond to modes that decay slower than a given exponential rate $e^{-\rho t}$. 
			%		\item Consider the system with homogeneous Neumann boundary conditions \eqref{eq:time-varying-acoustic-weak-neumann}. Then, the set of truncated resonant quasi-frequencies $\omega_K$ in the first Brillouin zone is finite.
			%		\end{itemize} 
		%	\end{theorem}
	\begin{proof}
		By the continuity and the G\aa rding inequality of Lemmas~\ref{lem:bound-a}--\ref{lem:garding-a}, we are in the setting of the meromorphic Fredholm theorem \cite[Chapter 1, Theorem~1.16]{AZ18}. Therefore, it suffices to show that the bilinear form $a_\omega^K$ is injective at a point $\omega\in \mathbb C$. Setting $\omega = i(K+1)$ and using the Lax-Milgram Theorem then shows that the spectrum is discrete. 
		%	The second result is then the direct consequence of the Bolzano--Weierstrass Theorem with Lemmas~\ref{lem:FL-multipliers-absorption}--\ref{lem:FL-multipliers-neumann}.
	\end{proof}

	\begin{remark}
		We note that the resonant quasi-frequencies of the time-continuous system \eqref{eq:weak-Floquet-forms} are, due to folding, fully determined by those located in the first Brillouin zone. For a finite $K$, such a folding property generally does not hold, unless some filtering, e.g. through a space discretization, is applied (see Theorem~\ref{thm:discrete-folding}).  As $K\rightarrow \infty$, the \JN{number} of truncated resonant quasi-frequencies generally tends to infinity, causing a possibly infinite number of accumulation points to appear in the set of resonant quasi-frequencies (see Appendix~\ref{sect:1d-folding} for an example).
	\end{remark}

	\subsection{Formulations as eigenvalue problems}
	We note that \eqref{eq:weak-Floquet-freq-K} takes the form of a quadratic eigenvalue problem:
	Find $\omega_K \in \mathbb T$ and $\widehat{u}_K^{\omega} \in \VK$, such that for all $\widehat v\in \VK$, we have
	\begin{align}\label{eq:weak-Floquet-freq-quadratic}
		\begin{aligned}
			&-\omega_K^2\left(\widehat{v},   \widehat{u}_K^{\omega} \right)_{K,D}
			-i\omega_K \big(2\left(\widehat{v}, \mathcal D \widehat{u}_K^{\omega} \right)_{K,D}
			+\kappa_0 (\widehat{v}, \mathcal{T}_\kappa   \widehat{u}_K^{\omega} )_{K,\Gamma} 	\big)		
			\\&+\left(\widehat{v}, \mathcal D^2  \widehat{u}_K^{\omega} \right)_{K,D}
			+ \left(\nabla \widehat{v},  \mathcal T_\kappa \nabla \widehat{u}_K^{\omega}\right)_{K,D} 
			+\kappa_0\left(\widehat{v}, \mathcal{T}_\kappa  \mathcal D \widehat{u}_K^{\omega} \right)_{K,\Gamma} 
			= 0
			.
		\end{aligned} 	
	\end{align}
	This nonlinear eigenvalue problem can be linearized in different ways, depending on the setting. In the general setting pursued here, we can introduce the auxiliary variable $\widehat z^\omega_K = (-i\omega_K+\mathcal D) \widehat u^\omega_K$ (which corresponds to the temporal derivative of $u^\omega$). We then obtain the following linear formulation: Find $\omega_K\in \mathbb T$ and $(\widehat{u}_K^\omega, \widehat{z}_K^\omega)\in \VK^2,$ such that for all $(\widehat v, \widehat\eta) \in \VK^2$ it holds that
	\begin{align}\label{eq:block-linear-EV-1}
		\left(\widehat{v},  \mathcal D \widehat{u}_K^{\omega} \right)_{K,D} -		\left(\widehat{v}, \widehat{z}_K^{\omega} \right)_{K,D}&= 
		i\omega_K \left(\widehat{v},  \widehat{u}_K^{\omega} \right)_{K,D} ,
		\\
		\left(\widehat\eta,  \mathcal D  \widehat{z}_K^{\omega} \right)_{K,D} + \left(\nabla \widehat\eta,  \mathcal T_\kappa \nabla \widehat{u}_K^{\omega}\right)_{K,D} 
		+\kappa_0 (\widehat\eta, \mathcal{T}_\kappa   \widehat{z}_K^{\omega} )_{K,\Gamma}
		&= 	i\omega_K\left(\widehat\eta,   \widehat{z}_K^{\omega} \right)_{K,D} \label{eq:block-linear-EV-2}
		.	
	\end{align}
	Rewriting this weak formulation in block form reads
	\begin{align}\label{eq:block-EV}
		\left(\begin{pmatrix}
			\widehat{v} \\ \widehat{\eta}
		\end{pmatrix} ,  \begin{pmatrix}
			\mathcal D & - I \\ 
			\mathcal T_A
			& \mathcal D
		\end{pmatrix}
		\begin{pmatrix}
			\widehat{u}_K^{\omega} \\ \widehat{z}_K^{\omega}
		\end{pmatrix}  \right)_{K,D} 
		+\kappa_0 (\widehat\eta,    \widehat{z}_K^{\omega} )_{K,\Gamma}
		&= 
		i\omega_K
		\left(\begin{pmatrix}
			\widehat{v} \\ \widehat{\eta}
		\end{pmatrix}
		,
		\begin{pmatrix}
			\widehat{u}_K^{\omega} \\ \widehat{z}_K^{\omega}
		\end{pmatrix}
		\right)_{K,D}.	
	\end{align}
	Here, we use the notation  $$ \left( \widehat\eta,  \mathcal T_A \widehat{u}_K^{\omega}\right)_{K,D}  =  \left(\nabla \widehat\eta,  \mathcal T_\kappa \nabla \widehat{u}_K^{\omega}\right)_{K,D} . $$
	%	\begin{align}
		%		\left(\begin{pmatrix}
			%			\widehat{v} \\ \widehat{\eta}
			%		\end{pmatrix} ,  \begin{pmatrix}
			%		\mathcal D & - I \\ 
			%		\mathcal T_\kappa \mathcal A
			%		& \mathcal D
			%		\end{pmatrix}
		%		\begin{pmatrix}
			%		\widehat{u}_K^{\omega} \\ \widehat{z}_K^{\omega}
			%		\end{pmatrix}  \right)_{K,D} 
		%		+\kappa_0 (\widehat\eta, \mathcal{T}_\kappa   \widehat{z}_K^{\omega} )_{K,\Gamma}&= 
		%i\omega 
		%\left(\begin{pmatrix}
			%	\widehat{v} \\ \widehat{\eta}
			%\end{pmatrix}
			%,
			%\begin{pmatrix}
			%	\widehat{u}_K^{\omega} \\ \widehat{z}_K^{\omega}
			%\end{pmatrix}
			%\right)_{K,D} ,
			%\\
			% \label{eq:block-form-linear-EV}
			%.	
			%	\end{align}
		We note that the structural properties of \eqref{eq:weak-Floquet-freq-quadratic}, in particular G\aa rding's inequality of Lemma~\ref{lem:garding-a}, are not preserved by this formulation.
	\drop{	\begin{remark}
        \rhc{What is the message of this remark? I think it should be dropped}
			In the fast-time modulated setting in high-contrast media, which was investigated in \cite{FA24}, the magnitude of 
			quasi-frequencies of interest is small, i.e., $\omega\rightarrow 0$ in terms of some asymptotically small quantities \rhc{????}. Reducing \eqref{eq:weak-Floquet-freq-quadratic} to the first order terms with respect to $\omega$ therefore yields the approximate linear eigenvalue problem
			\begin{align}\label{eq:weak-Floquet-freq-fast-time}
				\begin{aligned}
					&  
					\left(\widehat{v}, \mathcal D^2  \widehat{u}_K^{\omega} \right)_{K,D}
					+ \left(\nabla \widehat{v},  \mathcal T_\kappa \nabla \widehat{u}_K^{\omega}\right)_{K,D} 
					+\kappa_0\left(\widehat{v},   \mathcal D \widehat{u}_K^{\omega} \right)_{K,\Gamma} 
					\\&= i\omega_K \big(2\left(\widehat{v}, \mathcal D \widehat{u}_K^{\omega} \right)_{K,D}
					+\kappa_0 (\widehat{v},  \widehat{u}_K^{\omega} )_{K,\Gamma} 	\big)		
					\
					.
				\end{aligned} 	
			\end{align}
			This formulation is structurally closer to the quadratic eigenvalue problem (\ref{eq:weak-Floquet-freq-quadratic}) and naturally preserves more of the structure of Lemmas~\ref{lem:bound-a}--\ref{lem:garding-a}. However, a full investigation of the consequences of these properties (and the neglection of the second-order term) is beyond the scope of this paper and would be the subject of future research. Techniques based on finite element approximations for eigenvalue problems are found in the review article \cite{B10}.
		\end{remark}}
		
		\section{Fully discrete coupled harmonics eigenvalue problem}
            
		In the following, we consider the derivation of the truncated eigenvalue problem \eqref{eq:block-EV} in Section~\ref{sect:trunc}, but with the spatially discrete (and therefore finite-dimensional) Galerkin subspace
		%Generally, we assume that each Fourier coefficient spatial discretization takes the form
		$$\VhK= \text{span}\left\{ \phi_h(x)e^{-in\Omega t} \,\, |\,\,\phi_h \in X_h , \, n \in \mathbb Z \, , \, |n|\le K\right\}.$$
		Here, we assume that $X_h$ is a standard finite-dimensional subspace of $H^1(D)$, typically consisting of (piecewise) polynomial functions of a finite maximal polynomial degree. The parameter $h$ denotes the mesh width of a uniform grid, which tends to zero. 
        Throughout the rest of the paper, we denote by $C_{\text{inv}}(h)$ a constant given by an inverse estimate, such that we have 
		\begin{align}\label{eq:inverse-estimate}
			\|\nabla u_h\|_{L^2(D)} \le C_{\text{inv}}(h)	\|u_h\|_{L^2(D)}. 
		\end{align}
		This constant tends to infinity for $h\rightarrow 0$ and is crucially dependent on the approximation quality of the chosen subspace $X_h$. For finite element subspaces on polyhedral domains, we have $C(h)\propto h^{-1}$, which is found, e.g., in \cite[Theorem 4.5.11]{BS08}.
        
        We then find the following fully discrete eigenvalue formulation of \eqref{eq:block-EV}:	Find $\omega_{h,K} \in \mathbb T$ and $(\widehat{u}_{h,K}^{\omega},\widehat{z}_{h,K}^{\omega}) \in \VhK^2\subset \VK^2$, such that for all $(\widehat{v}_h,\widehat{\eta}_h)\in \VhK^2$ we have
		\begin{multline}
        \label{eq:block-EV-h}
							\left(\begin{pmatrix}
					\widehat{v}_h \\ \widehat{\eta}_h
				\end{pmatrix} ,  \begin{pmatrix}
					\mathcal D & - I \\ 
					\mathcal T_A
					& \mathcal D
				\end{pmatrix}
				\begin{pmatrix}
					\widehat{u}_{h,K}^{\omega} \\ \widehat{z}_{h,K}^{\omega}
				\end{pmatrix}  \right)_{K,D} 
				+\kappa_0 (\widehat\eta_h,  \widehat{z}_{h,K}^{\omega} )_{K,\Gamma}
				\\ = 
				i\omega_{h,K} 
				\left(\begin{pmatrix}
					\widehat{v}_h \\ \widehat{\eta}_h
				\end{pmatrix}
				,
				\begin{pmatrix}
					\widehat{u}_{h,K}^{\omega} \\ \widehat{z}_{h,K}^{\omega}
				\end{pmatrix}
				\right)_{K,D}.
		\end{multline}

		\subsection*{Decay of the temporal Fourier coefficients of spatially discrete modes}
		A key concern of the truncation used in \eqref{eq:weak-Floquet-freq-K} is the magnitude of the defect introduced by this procedure. In particular, we might be interested if a mode constructed by this procedure satisfies the homogeneous acoustic wave equation \eqref{eq:time-varying-acoustic} up to a small defect. For such results, we require that the truncation error is small, which only holds when $\widehat \kappa_n$ decays sufficiently fast, as $|n|\rightarrow \infty$ and, more critically, that the Fourier coefficients $ \widehat{u}_{h,K}^{\omega}$ fulfill a decay property with respect to $|n|$, as $|n|\rightarrow K$ and $K\rightarrow \infty$ (in the sense of \eqref{eq:local} below). From numerical evidence, we conclude that this is not the case for arbitrary fine space discretizations (see Figure~\ref{Fig:localization}). However, when sufficiently many temporal oscillations are resolved (i.e. $K$ is large enough for a fixed $h$), we observe that the Fourier coefficients of the eigenmodes of the system \eqref{eq:block-EV-h} decay. In this section, we give a mathematical explanation of this phenomenon.

		\begin{remark}
			The inverse estimate \eqref{eq:inverse-estimate} quantifies the filtering of spatial oscillations that is introduced by the space discretization. Alternatively, such a filtering condition could be imposed on the continuous truncated formulation \eqref{eq:weak-Floquet-freq-K}. Then the following results would \JN{carry} over to such an infinite-dimensional filtered setting. Here, they are presented in a unified setting for a generic discretization in space.
		\end{remark}
       
		We start with a result that gives worst-case estimates on the decay of the spectrum of fully discrete eigenmodes.
		
		\begin{theorem}\label{thm:localization}
			Let $\kappa$ fulfill the assumptions \eqref{assumpt-kappa} and, further, let $\widehat{u}_{h,K}^{\omega} $ and $\omega_{h,K} $ be a resonant mode and its associated quasi-frequency, which fulfill the eigenvalue problem \eqref{eq:block-EV-h}. Let $\omega_{h,K} $ be in the first Brillouin zone, i.e., have a real part in $(-\Omega/2,\Omega/2]$. Moreover, we assume that the mode is normalized with respect to $\left\| \cdot\right\|_{K,D}.$ For $1<|n|\le K$, we then obtain
			\begin{align}\label{eq:local}
				\left\| \left( \widehat{u}_{h,K}^{\omega}\right)_n \right\|_{L^2(D)}	\le 
				C\dfrac{ C^2_{\mathrm{inv}}(h)}{ |n|^2}.
			\end{align}
			The constant $C$ depends on $D$, $T$, $\Omega$ and $\kappa$, but is crucially independent of $K$ and the space discretization (whose influence is fully captured in $C_{\mathrm{inv}}(h)$, defined in \eqref{eq:inverse-estimate}).
		\end{theorem}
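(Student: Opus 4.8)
The plan is to extract from the fully discrete eigenvalue problem a scalar identity for each individual Fourier coefficient and then exploit that, for $|n|\ge 2$, the factor $(\omega_{h,K}+n\Omega)^2$ is large. Eliminating the auxiliary variable $\widehat{z}_{h,K}^{\omega}$ in \eqref{eq:block-EV-h} returns the second-order weak form \eqref{eq:weak-Floquet-freq-K} with $\widehat b=0$. Since $V_{h,K}$ has tensor-product structure, for each $|n|\le K$ the single-mode function $\widehat v$ with slots $\widehat v_m=\delta_{mn}\,u_n$, where $u_n:=(\widehat{u}_{h,K}^{\omega})_n\in X_h$, is admissible. Testing with it decouples the $n$-th equation: $\mathcal D$ acts diagonally so that $(-i\omega+\mathcal D)$ contributes the scalar $-i(\omega_{h,K}+n\Omega)$ at frequency $n$, while $\mathcal T_\kappa$ couples neighbouring modes through the discrete convolution with $(\widehat{\kappa}_k)_k$. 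This yields
\begin{align*}
\overline{(\omega_{h,K}+n\Omega)^2}\,\|u_n\|^2_{L^2(D)}
=\Big(\nabla u_n,\ \textstyle\sum_{|m|\le K}\widehat{\kappa}_{n-m}\nabla u_m\Big)_{L^2(D)}
+\,i\kappa_0\,\overline{(\omega_{h,K}+n\Omega)}\,\|u_n\|^2_{L^2(\Gamma)}.
\end{align*}

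Next I would take real parts. Writing $\omega_{h,K}=\mathrm{Re}\,\omega_{h,K}+i\,\mathrm{Im}\,\omega_{h,K}$, the left-hand side becomes $[(\mathrm{Re}\,\omega_{h,K}+n\Omega)^2-(\mathrm{Im}\,\omega_{h,K})^2]\,\|u_n\|^2_{L^2(D)}$. The hypothesis $\mathrm{Re}\,\omega_{h,K}\in(-\Omega/2,\Omega/2]$ gives $(\mathrm{Re}\,\omega_{h,K}+n\Omega)^2\ge(|n|-\tfrac12)^2\Omega^2\ge\tfrac14 n^2\Omega^2$ for $|n|\ge1$, while the a priori bound $|\mathrm{Im}\,\omega_{h,K}|\le C_\kappa'$ from the discrete Floquet-multiplier results (Lemma~\ref{lem:FL-multipliers-absorption-h} and its Neumann counterpart) keeps $(\mathrm{Im}\,\omega_{h,K})^2$ bounded by a constant. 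The decisive point is the boundary contribution: taking the real part collapses the factor $i\,\overline{(\omega_{h,K}+n\Omega)}$ to $\mathrm{Im}\,\omega_{h,K}$, so the real part of the boundary term equals $\kappa_0\,\mathrm{Im}(\omega_{h,K})\,\|u_n\|^2_{L^2(\Gamma)}$ and carries \emph{no} factor of $n$. This is what prevents the boundary from spoiling the $|n|^{-2}$ decay, and I regard it as the crux of the argument (for homogeneous Neumann conditions the boundary term is absent and the estimate is only easier).

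It then remains to estimate the two right-hand terms using the inverse inequality \eqref{eq:inverse-estimate}, which is the only place where the space discretization enters. For the convolution term, Cauchy--Schwarz in $m$ together with the normalization $\sum_{|m|\le K}\|u_m\|^2_{L^2(D)}=1$ and the $\ell^2$-summability of $(\|\widehat{\kappa}_k\|_{L^\infty(D)})_k$ --- which holds since $\kappa\in C^1_{\mathrm{per}}(0,T;L^\infty(D))$ forces $\|\widehat{\kappa}_k\|_{L^\infty(D)}$ to decay like $|k|^{-1}$ --- produces a bound $C\,C_{\mathrm{inv}}(h)^2\,\|u_n\|_{L^2(D)}$, the square arising from applying \eqref{eq:inverse-estimate} to both $\nabla u_n$ and $\nabla u_m$. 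For the boundary term I would use the multiplicative trace inequality followed by \eqref{eq:inverse-estimate} to get $\|u_n\|^2_{L^2(\Gamma)}\le C\,C_{\mathrm{inv}}(h)\,\|u_n\|^2_{L^2(D)}$. Collecting everything and dividing by $\|u_n\|_{L^2(D)}$ leaves
\begin{align*}
\Big[\tfrac14\Omega^2 n^2-(C_\kappa')^2-C\,C_{\mathrm{inv}}(h)\Big]\,\|u_n\|_{L^2(D)}\ \le\ C\,C_{\mathrm{inv}}(h)^2.
\end{align*}

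Finally I would close the estimate by a case distinction in $n$. When $\tfrac14\Omega^2 n^2$ exceeds twice the subtracted terms the bracket is $\ge\tfrac18\Omega^2 n^2$, and dividing yields exactly \eqref{eq:local}. In the complementary low-index regime the bracket may be non-positive, but there $n^2\lesssim C_{\mathrm{inv}}(h)^2$ (using that $C_{\mathrm{inv}}(h)$ is bounded below by a constant depending only on $D$), so the trivial bound $\|u_n\|_{L^2(D)}\le\|\widehat{u}_{h,K}^{\omega}\|_{K,D}=1$ already gives \eqref{eq:local} after enlarging $C$. Throughout, every constant depends only on $D$, $T$, $\Omega$ and $\kappa$ and is independent of $K$ (the convolution and normalization bounds are uniform in the truncation) and of $h$ (whose entire effect is carried by $C_{\mathrm{inv}}(h)$). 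The main obstacle is precisely the boundary term: without the real-part cancellation it would scale like $n\,C_{\mathrm{inv}}(h)\,\|u_n\|^2$ and degrade the decay to $|n|^{-1}$, so securing that cancellation --- and correctly tracking the two powers of $C_{\mathrm{inv}}(h)$ through the inverse estimate --- is the heart of the proof.
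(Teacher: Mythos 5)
Your overall strategy (test mode by mode, exploit that $(\omega_{h,K}+n\Omega)^2\sim n^2\Omega^2$ in the first Brillouin zone, and let the inverse estimate carry the whole $h$-dependence) is a legitimately different route from the paper's, and most of the individual estimates — the $\ell^2$-convolution bound, the real-part cancellation in the boundary term, the trace/inverse treatment of $\|u_n\|^2_{L^2(\Gamma)}$ — are fine. However, there is one step that breaks the argument: you invoke ``$|\mathrm{Im}\,\omega_{h,K}|\le C_\kappa'$ from Lemma~\ref{lem:FL-multipliers-absorption-h}''. That lemma concerns the Floquet exponents $\omega_h$ of the spatially semi-discrete \emph{evolution} problem \eqref{eq:weak-absorption-h}, not the eigenvalues $\omega_{h,K}$ of the frequency-truncated system \eqref{eq:block-EV-h}; the truncation in $K$ changes the spectrum, and the corresponding statement for $\omega_{h,K}$ is precisely Proposition~\ref{prop:truncated_Floquet-h}, which is proved \emph{after} and \emph{by means of} Theorem~\ref{thm:localization} (via Lemma~\ref{lem:big-U}). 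Using it here is circular. Moreover, even for the semi-discrete spectrum the absorbing case only yields the one-sided bound $\mathrm{Im}\,\omega_h\le C_\kappa'$, whereas you need $(\mathrm{Im}\,\omega_{h,K})^2$ bounded by an $h$- and $K$-independent constant. The only non-circular a priori control available at this stage is $|\omega_{h,K}|\le C(C_{\mathrm{inv}}(h)+K)$ (the first bound of Lemma~\ref{lem:bound-omega-K-h}); substituting it into your bracket gives $\tfrac14\Omega^2n^2-C(C_{\mathrm{inv}}(h)+K)^2-\dots$, which is negative for every $|n|\le K$, and your two-case closing argument then leaves an uncovered intermediate range of $n$. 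So the gap is essential to your route, not a technicality.

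The paper avoids this by never needing a constant bound on $\mathrm{Im}\,\omega_{h,K}$: it tests the quadratic form globally with $\widehat v_h=(-i\omega_{h,K}+\mathcal D)^2\widehat u^{\omega}_{h,K}$, obtains the self-contained bound $|\omega_{h,K}|\le C\bigl(C_{\mathrm{inv}}(h)+\|\mathcal D\widehat u^{\omega}_{h,K}\|_{K,D}\bigr)$ from that same identity, deduces the summed inequality
\begin{align*}
\sum_{n=-K}^K\bigl(|n|^4-C\,C^2_{\mathrm{inv}}(h)|n|^2\bigr)\bigl\|(\widehat u^{\omega}_{h,K})_n\bigr\|^2_{L^2(D)}\le C\,C^4_{\mathrm{inv}}(h),
\end{align*}
and then extracts the per-mode decay by a case distinction at $n_0\sim C_{\mathrm{inv}}(h)$, reading off a single summand of the tail. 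If you want to keep your per-mode identity, you would have to close it in a similarly self-referential way (e.g.\ by summing your identities against appropriate weights and absorbing the $\omega$-dependent terms into $\|\mathcal D\widehat u\|_{K,D}$ and $\|\mathcal D^2\widehat u\|_{K,D}$), rather than importing an external spectral bound that is itself a downstream consequence of the theorem.
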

		\begin{proof}
			Throughout this proof, and for the rest of the paper, $C$ denotes a generic constant with different values that does not depend on $K$ and $h$.
			
			Retracing the substitution used to obtain \eqref{eq:block-EV-h}, we recover the quadratic form of the spatially discrete formulation \eqref{eq:weak-Floquet-freq-quadratic}, which reads
			\begin{align*}
				\begin{aligned}
					&\left(\widehat{v}_h, (-i\omega_{h,K}  + \mathcal D)^2  \widehat{u}_{h,K}^{\omega} \right)_{K,D}
					+ \left(\nabla \widehat{v}_h,  \mathcal T_\kappa \nabla \widehat{u}_{h,K}^{\omega}\right)_{K,D} 	
					\\&
					+\kappa_0 (\widehat{v}_h,  	(-i\omega_{h,K}  + \mathcal D)  \widehat{u}_{h,K}^{\omega} )_{K,\Gamma} 	
					= 0
					,
				\end{aligned} 	
			\end{align*}
			for all $\widehat v_h \in \VhK$.
			We test the formulation with $\widehat{v}_h= (-i\omega_{h,K}  + \mathcal D)^2  \widehat{u}_{h,K}^{\omega} $ and apply the inverse estimate \eqref{eq:inverse-estimate} twice, which gives
			\begin{align*}	
				0 &\ge	\left\|(-i\omega_{h,K}  + \mathcal D)^2  \widehat{u}_{h,K}^{\omega} \right\|^2_{K,D}
				- 	 C^2_{\text{inv}}(h) C_\kappa 	\left\|(-i\omega_{h,K}  + \mathcal D)^2  \widehat{u}_{h,K}^{\omega} \right\|_{K,D}	
				\left\| \widehat{u}_{h,K}^{\omega} \right\|_{K,D}
				\\&
				+\kappa_0 \big((-i\omega_{h,K}  + \mathcal D)^2  \widehat{u}_{h,K}^{\omega},  	(-i\omega_{h,K}  + \mathcal D)  \widehat{u}_{h,K}^{\omega} \big)_{K,\Gamma} 	
				.
			\end{align*}
			Here, we use the constant $C_\kappa$ of Lemma~\ref{lem:bound-a}. We bound the trace pairing from above by the usual trace theorems, which give a constant $C_\Gamma$, such that
			\begin{align*}
				&\big| \big((-i\omega_{h,K}  + \mathcal D)^2  \widehat{u}_{h,K}^{\omega},  	(-i\omega_{h,K}  + \mathcal D)  \widehat{u}_{h,K}^{\omega} \big)_{K,\Gamma} \big|
				\\
				&\le C_\Gamma C_{\text{inv}}(h) \| (-i\omega_{h,K}  + \mathcal D)^2  \widehat{u}_{h,K}^{\omega}\|_{K,D} \|	(-i\omega_{h,K}  + \mathcal D)  \widehat{u}_{h,K}^{\omega} \|_{K,D}.
			\end{align*}
			Inserting this inequality above and rearranging yields
			\begin{align*}	
				&\left\|(-i\omega_{h,K}  + \mathcal D)^2  \widehat{u}_{h,K}^{\omega}\right\|^2_{K,D}\le
				C^2_{\text{inv}}(h) C_\kappa 	\left\|(-i\omega_{h,K}  + \mathcal D)^2  \widehat{u}_{h,K}^{\omega} \right\|_{K,D}	
				\left\| \widehat{u}_{h,K}^{\omega} \right\|_{K,D}
				\\& \quad \quad
				+ C_{\text{inv}}(h) C_\Gamma \| (-i\omega_{h,K}  + \mathcal D)^2  \widehat{u}_{h,K}^{\omega}\|_{K,D} \|	(-i\omega_{h,K}  + \mathcal D)  \widehat{u}_{h,K}^{\omega} \|_{K,D}
				.
			\end{align*}
			Dividing through the common factor and using the normalization of $\widehat{u}_{h,K}^{\omega}$ then yields the intermediate estimate
			\begin{align}\label{eq:identity-localized}
				\begin{aligned}	
					\left\|(-i\omega_{h,K}  + \mathcal D)^2  \widehat{u}_{h,K}^{\omega}\right\|_{K,D}	&\le
					C^2_{\text{inv}}(h) C_\kappa 	
					\left\| \widehat{u}_{h,K}^{\omega} \right\|_{K,D}
					\\&	+ C_{\text{inv}}(h) C_\Gamma \|	(-i\omega_{h,K}  + \mathcal D)  \widehat{u}_{h,K}^{\omega} \|_{K,D}
					\\ & \le 
					C_{\Gamma,\kappa} C_{\text{inv}}(h) \left( C_{\text{inv}}(h)+ |\omega_{h,K} |  +  \|\mathcal D \widehat{u}_{h,K}^{\omega} \|_{K,D}\right)
					.
				\end{aligned}
			\end{align}
			We note that the above term depends quadratically on $|\omega_{h,K} |$ on the left-hand side and linearly on the right-hand side, which yields an estimate from above on $|\omega_{h,K}|$. This computation is performed in the proof of Lemma~\ref{lem:bound-omega-K-h}, which yields the inequality  
			\begin{align*}
				|\omega_{h,K} | \le  C ( C_{\mathrm{inv}}(h) +\|\mathcal D \widehat{u}_{h,K}^{\omega} \|_{K,D}).
			\end{align*}
			Since $\omega_{h,K} $ is in the first Brillouin zone, we obtain a constant $C$, such that for all $|n|>1$, we have, after squaring both sides of \eqref{eq:identity-localized} and applying Young's inequality to the right-hand side
			\begin{align*}
				\|\mathcal D^2 \widehat{u}_{h,K}^{\omega} \|^2_{K,D}	\le 
				C C^2_{\text{inv}}(h) \left( C^2_{\text{inv}}(h) +  \|\mathcal D \widehat{u}_{h,K}^{\omega} \|^2_{K,D} \right).
			\end{align*} 
			Finally, we write out the summands from the norm  $\left\| \cdot\right\|_{K,D}$ and pull all of them to the left-hand side, which gives 
			\begin{align}\label{eq:written-out-K-D-norm}
				\sum_{n=-K}^K \left( |n|^4	-C C^2_{\text{inv}}(h)|n|^2 \right)\left\| \left( \widehat{u}_{h,K}^{\omega}\right)_n \right\|^2_{L^2(D)}	\le 
				C C^4_{\text{inv}}(h).
			\end{align} 
			We conclude with a proof by cases, where we separate the sum at $n_0= \lceil C_0C_{\mathrm{inv}}(h)\rceil$. If the negative terms on the left-hand side of \eqref{eq:written-out-K-D-norm} can be absorbed, i.e., we have
			\begin{align}\label{eq:case-1}
				\tfrac{1}{2}\sum_{|n|>n_0}|n|^4 \left\| \left( \widehat{u}_{h,K}^{\omega}\right)_n \right\|^2_{L^2(D)} 
				>CC^2_{\text{inv}}(h)	\sum_{n=-n_0}^{n_0}  |n|^2 \left\| \left( \widehat{u}_{h,K}^{\omega}\right)_n \right\|^2_{L^2(D)},
			\end{align} 
			then we have the result by absorption and estimating the left-hand side from below by a single summand. 
			
			If \eqref{eq:case-1} does not hold, then we have 
			% Under the assumed condition \eqref{eq:cond-n}, we can absorb the negative terms on the left-hand side and therefore obtain the stated result for an arbitrary $n$ 
			% We note that the low-frequency components of the sum are not critical, since for  $n_0= \lceil CC_{\mathrm{inv}}(h)\rceil$, since the bound
			\begin{align*}
				\sum_{|n|>n_0}|n|^4 \left\| \left( \widehat{u}_{h,K}^{\omega}\right)_n \right\|^2_{L^2(D)} 
				\le 2CC^2_{\text{inv}}(h)	\sum_{n=-n_0}^{n_0}  |n|^2 \left\| \left( \widehat{u}_{h,K}^{\omega}\right)_n \right\|^2_{L^2(D)}
				\le 4CC^4_{\text{inv}}(h)	,
			\end{align*} 
			which already implies the localization result \eqref{eq:local}.
		\end{proof}
%  \JN{
% \begin{remark}
% The mode $\widehat{u}^\omega_{h,K}$ in Theorem~\ref{thm:localization} is normalized with respect to $\| \cdot \|_{K,D}$. Instead, one could normalize $\widehat{u}^\omega_{h,K}$ such that
% $$
%  \|\widehat{u}^\omega_{h,K} \|^2_{K,D}
%  + \|\nabla \widehat{u}^\omega_{h,K} \|^2_{K,D}
%  =1 .
%  $$
% \end{remark}
For our first main result, we require the following auxiliary result, which gives a bound on the imaginary part of the discrete approximations of the Floquet \JN{exponents} in the first Brillouin zone. 
        		% In the previous proof, we used the following auxiliary result, which estimates the imaginary part of the fully discrete Floquet exponent in the Brillouin zone.
		\begin{lemma}\label{lem:bound-omega-K-h}
			Let $\omega_{h,K}$ be in the first Brillouin zone and, further, let the assumptions on $\kappa$ formulated in \eqref{assumpt-kappa} hold. Then, there exists a constant $C$ independent of $h$ and $K$, such that 
			\begin{align*}
				|\omega_{h,K}| \le  C ( C_{\mathrm{inv}}(h) + K).
			\end{align*}
		
            %that tends to infinity as the spatial operator is approximated more accurately and fulfills $C_{\mathrm{inv}}(h) \propto h^{-1}$ for finite element subspaces.
		\end{lemma}
        
		\begin{numberedproof}{}
			We start from \eqref{eq:identity-localized}, which after rearranging yields 
			\begin{align}\label{eq:bound-w}	
				\begin{aligned}
					|\omega_{h,K}|^2
					& \le |\omega_{h,K} | \|\mathcal D \widehat{u}_{h,K}^{\omega} \|_{K,D} +  \|\mathcal D^2 \widehat{u}_{h,K}^{\omega} \|_{K,D} 
					\\&+
					C_{\Gamma,\kappa} C_{\text{inv}}(h) \left( C_{\text{inv}}(h)+ |\omega_{h,K} |  +   \|\mathcal D \widehat{u}_{h,K}^{\omega} \|_{K,D} \right)
					\\& \le 
					|\omega_{h,K} | (K + C_{\Gamma,\kappa} C_{\text{inv}}(h) ) + K^2 + C_{\Gamma,\kappa} C_{\text{inv}}(h) \left( C_{\text{inv}}(h) +  K \right)	.
				\end{aligned}
			\end{align}
			The statement is now given by applying Young's inequality on the first summand on the right-hand side and taking the square root of both sides. 
			
		\end{numberedproof}
        
        \begin{corollary}
       \JN{In the setting of Lemma~\ref{lem:bound-omega-K-h}and for $K$ sufficiently large, we have
			\begin{align*}
				|\omega_{h,K}| \le  C C^2_{\mathrm{inv}}(h).
			\end{align*}
			Note that $C_{\mathrm{inv}}(h) $ is the constant from the inverse estimate \eqref{eq:inverse-estimate}.}
        \end{corollary}
        \begin{proof}
        		\JN{	We obtain the estimate from the localization result \eqref{eq:local}, which yields a constant $C$, such that
			\begin{align*}
				\|\mathcal D \widehat{u}_{h,K}^{\omega} \|_{K,D} +
				\|\mathcal D^2 \widehat{u}_{h,K}^{\omega} \|_{K,D}
				\le C C^2_{\text{inv}}(h).
			\end{align*}
			Inserting this estimate into \eqref{eq:bound-w} yields the desired bound.}
        \end{proof}

		\subsection{Connection to initial-value problems}
		We continue with a connection between the fully discrete truncated coupled harmonics \eqref{eq:block-EV-h} and the spatially discrete initial value problem \eqref{eq:weak-absorption-h}. %By Floquet--Lyapunov theory, we obtain that there exists a complete set of Floquet solutions, which determines the behavior of initial value problems without excitation, as discussed in Lemma~\ref{lem:Floquet}.
		%	More precisely,  By construction, we have that the associated temporal solution 
		%		\begin{equation*}
			%			U^{\omega}_{h,K} (t) =  	u^{\omega}_{h,K} (t) e^{-i\omega t}= \sum_{n=-K}^K\left(\widehat{u}^{\omega}_{h,K}\right)_ne^{-i(\omega+n\Omega)t},
			%		\end{equation*}
		%		implies an approximation of the type \eqref{eq:Bloch}. However, in what sense, does this solution approximate the initial value problem
		%		
		%		Spatially discrete Floquet--Bloch solutions are defined by two properties: Firstly, they solve a homogeneous time-modulated equation (such as \eqref{eq:weak-absorption-h}) and secondly, they fulfill the specific form \eqref{eq:Bloch}. The approach of coupled harmonics yields approximations by introducing a defect in the first property and fulfills the specific form \eqref{eq:Bloch} by restricting the numerical approximation to such functions. A natural question is therefore, if we can construct sensible approximations the other way around, namely functions $u^{h,K,\omega}_{\mathrm{init}}$ that fulfills the spatially discrete initial value problem \eqref{eq:weak-absorption-h}, at the cost of introducing a defect in the form \eqref{eq:Bloch}. Here, we address this slightly altered problem formulation.
		
		Let $\omega_{h,K}$ and $\widehat{u}_{h,K}$ denote an eigenvalue-eigenmode pair of the fully discrete system \eqref{eq:block-EV-h}. The associated Floquet--Bloch solution then reads in the time-domain
		\begin{equation*}
			U^{\omega}_{h,K} (t) =  	u^{\omega}_{h,K} (t) e^{-i\omega t}= \sum_{n=-K}^K\left(\widehat{u}^{\omega}_{h,K}\right)_ne^{-i(\omega+n\Omega)t}.
		\end{equation*}
		The following proposition shows that this mode fulfills the spatially discrete time-modulated acoustic wave equation, up to a defect that tends to zero as $K\rightarrow \infty$.
		
		\begin{lemma}\label{lem:big-U}
			Under the assumptions of Theorem~\ref{thm:localization} and the further assumption $\kappa \in C^3_{\mathrm{per}}([0,T];L^{\infty}(D))$, we have the following result. \drop{The temporal approximation $	U^{\omega}_{h,K} (t)$ fulfills the time-modulated weak formulation \eqref{eq:weak-absorption-h} up to a defect, namely} For all $v_h\in X_h$ and $t\ge 0$, it holds that
			\begin{align}\label{eq:weak-Floquet-time-construction}
				\begin{aligned}
					&\left(v_h, \partial_t^2 U^{\omega}_{h,K} (t) \right)_{L^2(D)} + \left(\nabla v_h,  \kappa (t)\nabla U^{\omega}_{h,K}(t) \right)_{L^2(D)} 
					\\&+ \kappa_0\left( v_h,  \partial_t U^{\omega}_{h,K}  (t)\right)_{L^2(\Gamma)} 
					= \left( \nabla v_h, \nabla \delta_{h,K}^\omega(t)\right)_{L^2(D)},
				\end{aligned} 	
			\end{align}
			where the bilinear form of the defect is bounded by 
            %\rhc{Why the $L^2$-norm of $v_h$ and not the $H^1$-norm?}
            %This is the term that appears in the energy estimate, when we estimate $U_{h,K}^{\omega}$ (since we test with the temporal derivative).
			\begin{align}\label{eq:bound-delta}
				\sup_{\|v_h\|_{L^2(D)}=1}
				|\left( \nabla v_h, \nabla \delta_{h,K}^\omega(t)\right)_{L^2(D)}|
				\le 
				C	\dfrac{C_{\mathrm{inv}}^4(h)}{K^2}e^{\normalfont{\Im} \omega_{h,K}  t}.
			\end{align}
			Here, $C_{\mathrm{inv}}(h)$ denotes the constant of the inverse estimate \eqref{eq:inverse-estimate} and $C$ depends on $D$, $\kappa$ and $\Omega$.
		\end{lemma}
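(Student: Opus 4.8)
The plan is to translate the frequency-domain eigenvalue equation satisfied by $\widehat{u}_{h,K}^{\omega}$ into the time domain mode by mode, and to identify the defect with exactly the part of the product $\kappa\nabla U^{\omega}_{h,K}$ that the frequency truncation discards. I would start from the quadratic form recovered at the beginning of the proof of Theorem~\ref{thm:localization}, test it with $\widehat{v}_h$ supported in a single frequency $n$ with $|n|\le K$ (that is, a fixed $v_h\in X_h$ placed in the $n$-th slot, which is an admissible element of $V_{h,K}$), and record the resulting scalar identity for each such $n$. Multiplying the $n$-th identity by $e^{-i(\omega_{h,K}+n\Omega)t}$ and summing over $|n|\le K$ reassembles the three terms of \eqref{eq:weak-Floquet-time-construction}: since $\partial_t U^{\omega}_{h,K}=e^{-i\omega_{h,K}t}(-i\omega_{h,K}+\partial_t)u^{\omega}_{h,K}$ and the operator $-i\omega_{h,K}+\mathcal D$ acts diagonally on temporal frequencies, the terms $(v_h,\partial_t^2 U^{\omega}_{h,K})_{L^2(D)}$ and $\kappa_0(v_h,\partial_t U^{\omega}_{h,K})_{L^2(\Gamma)}$ are reproduced \emph{exactly}, with no truncation error.

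The entire discrepancy therefore comes from the stiffness term. Writing $\kappa\nabla U^{\omega}_{h,K}$ as a time series, its $n$-th Fourier coefficient is $\sum_{|m|\le K}\widehat{\kappa}_{n-m}\nabla(\widehat{u}_{h,K}^{\omega})_m$, which is generically nonzero for \emph{all} $n\in\mathbb Z$, whereas the summed eigenvalue identity only supplies the coefficients with $|n|\le K$. Hence the left-hand side of \eqref{eq:weak-Floquet-time-construction} equals precisely the omitted tail, and I would define
\[
\nabla\delta^{\omega}_{h,K}(t)=\sum_{|n|>K}\Big(\sum_{|m|\le K}\widehat{\kappa}_{n-m}\nabla(\widehat{u}_{h,K}^{\omega})_m\Big)e^{-i(\omega_{h,K}+n\Omega)t},
\]
so that \eqref{eq:weak-Floquet-time-construction} holds by construction. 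The vector field on the right need not be an exact gradient; the symbol $\nabla\delta^{\omega}_{h,K}$ is understood as this projected tail of $\kappa\nabla U^{\omega}_{h,K}$, and only its pairing against $\nabla v_h$ is estimated.

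It then remains to bound $\sup_{\|v_h\|_{L^2(D)}=1}|(\nabla v_h,\nabla\delta^{\omega}_{h,K}(t))_{L^2(D)}|$. Since $|e^{-i(\omega_{h,K}+n\Omega)t}|=e^{\Im\omega_{h,K}\,t}$ for every $n$ (the shift $n\Omega$ being real), this factor pulls out, and I would apply the triangle inequality in $n$ followed by Cauchy--Schwarz in space. Two applications of the inverse estimate \eqref{eq:inverse-estimate}---once on $\nabla v_h$ and once on each $\nabla(\widehat{u}_{h,K}^{\omega})_m$---together with the localization bound \eqref{eq:local}, extended to all $|m|\le K$ via the normalization for the finitely many modes $|m|\le 1$, namely $\|(\widehat{u}_{h,K}^{\omega})_m\|_{L^2(D)}\le C\,C_{\mathrm{inv}}^2(h)/(1+m^2)$, produce the prefactor $C_{\mathrm{inv}}^4(h)$ and reduce the claim to the purely numerical estimate
\[
S:=\sum_{|n|>K}\sum_{|m|\le K}\frac{\|\widehat{\kappa}_{n-m}\|_{L^{\infty}(D)}}{1+m^2}\le \frac{C}{K^2}.
\]
The assumption $\kappa\in C^3_{\mathrm{per}}([0,T];L^{\infty}(D))$ enters here through the Fourier decay $\|\widehat{\kappa}_k\|_{L^{\infty}(D)}\le C(1+|k|)^{-3}$, obtained by integrating by parts three times in $t$.

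The \emph{main obstacle} is exactly the bound $S\lesssim K^{-2}$, in which both the localization of $\widehat{u}_{h,K}^{\omega}$ and the third-order decay of $\widehat{\kappa}$ are indispensable: weakening either factor degrades the rate below $K^{-2}$. I would prove it using the tail estimate $\sum_{j\ge J}(1+j^3)^{-1}\le C J^{-2}$, which after the substitution $k=n-m$ gives $\sum_{|n|>K}\|\widehat{\kappa}_{n-m}\|_{L^{\infty}(D)}\le C(K+1-|m|)^{-2}$, and then splitting the remaining $m$-sum at $|m|=K/2$. For $|m|\le K/2$ the factor $(K+1-|m|)^{-2}$ is $O(K^{-2})$ while $\sum_m(1+m^2)^{-1}$ converges; for $K/2<|m|\le K$ the weight $(1+m^2)^{-1}$ is $O(K^{-2})$ while $\sum(K+1-|m|)^{-2}$ is bounded. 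Both regimes contribute $O(K^{-2})$, and collecting all factors yields precisely the bound \eqref{eq:bound-delta}.
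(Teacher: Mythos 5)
Your proposal is correct and follows essentially the same route as the paper: identify the defect as the discarded high-frequency tail $\nabla\delta^{\omega}_{h,K}(t)=\pm\sum_{|n|>K}\sum_{|m|\le K}\widehat{\kappa}_{n-m}\nabla(\widehat{u}^{\omega}_{h,K})_m e^{-i(\omega+n\Omega)t}$ (the diagonal terms being reproduced exactly), pull out $e^{\Im\omega_{h,K}t}$, apply the inverse estimate twice and the localization bound \eqref{eq:local} together with the $O((1+|k|)^{-3})$ decay of $\widehat\kappa_k$, and reduce to the double sum $\sum_{|m|\le K}(1+m^2)^{-1}(K+1-|m|)^{-2}\lesssim K^{-2}$. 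Your treatment is if anything slightly more explicit than the paper's (the pointwise-in-$t$ bound, the handling of $|m|\le 1$ via normalization, and the split of the $m$-sum at $K/2$), so no further comment is needed.
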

		\begin{proof}
			
			%		We separate the proof into three parts, which connect the mode to a time-dependent problem with a given source, estimate that right-hand side and use the energy estimates of the previous sections. Throughout the proof, we use $C$ as a generic constant with possibly different values, that remains independent of $h$ and $K$.
			
			\emph{(i) An associated time-dependent equation.}
			Inserting $U^{\omega}_{h,K}$ and eliminating the terms accounted for in \eqref{eq:weak-Floquet-freq-K} shows that
			%	the following associated temporal wave equation. 
			%	\begin{align}
				%		\begin{aligned}
					%			&\left(v_h, \partial_t^2 U^{\omega}_{h,K} (t) \right)_{L^2(D)} + \left(\nabla v_h,  \kappa \nabla U^{\omega}_{h,K}(t) \right)_{L^2(D)} 
					%			\\&+ \kappa_0\left( v_h,  \partial_t U^{\omega}_{h,K}  (t)\right)_{L^2(D)} 
					%			= \left( \nabla v_h, -\nabla \delta_{h,K}^\omega(t)\right)_{L^2(D)},
					%		\end{aligned} 	
				%	\end{align}
			%	where $\delta^\omega_{h,K}$ corresponds to 
			$\nabla \delta^\omega_{h,K}$ is the remainder of the Fourier series of $\kappa (t) \nabla U^{\omega}_{h,K}(t)$ from the associated term on the left-hand side. 
			For this term, we have by the construction of the truncated system \eqref{eq:weak-formulation-K}, the explicit form
			\begin{align*}
				\nabla	\delta^\omega_{h,K}(t)= -\sum_{|n|>K}\sum_{m=-K}^K \widehat{\kappa}_{n-m}\left(\nabla \widehat{u}^{\omega}_{h,K}\right)_m e^{-i(\omega_{h,K}+n\Omega)t}.
			\end{align*}
			\emph{(ii) Bounds on the remainder.}
			We note that the bilinear form is continuous with respect to the appropriate norm of Proposition~\ref{prop:energy-estimate}, since the inverse estimate \eqref{eq:inverse-estimate} implies that
			\begin{align}
				|\left( \nabla v_h, \nabla \delta_{h,K}^\omega(t)\right)_{L^2(D)}|
				\le 
				C^2_{\mathrm{inv}}(h) \| v_h\|_{L^2(D)}
				\| \delta_{h,K}\|_{L^2(D)}.
			\end{align}
			\JN{Using the triangle inequality therefore gives
			\begin{align}\label{eq:delta-est}
				 \| \delta_{h,K} (t)
				\|_{L^2(D)}\,
				%		&\le
				%		e^{\Im \omega_{h,K}  t}\left(
				%		\sum_{|n|>K}\left\|\sum_{m=-K}^K
				%		\widehat{\kappa}_{n-m}\left(\widehat{u}^{\omega}_{h,K}\right)_m\right\|^2_{L^2(D)}
				%		\right)^{1/2}
				%		\\
				&\le
				e^{\Im \omega_{h,K}  t}
				\sum_{|n|>K} \sum_{m=-K}^K \left\|	\widehat{\kappa}_{n-m}\right\|_{L^\infty(D)}\left\|
				\left(\widehat{u}^{\omega}_{h,K}\right)_m\right\|_{L^2(D)}.
			\end{align}}
			With $\kappa\in C^3_{\text{per}}(0,T;L^{\infty}(D))$, the localization estimate \eqref{eq:local} then yields a constant $C$, such that 
			\begin{align*}
				& \| \delta_{h,K} (t)
				\|_{L^2(D)}\,
				\le C e^{\Im \omega_{h,K}  t} C_{\mathrm{inv}}^2(h)
				\sum_{m=-K}^K \dfrac{1}{1+|m|^2}	\sum_{|n|>K}  \dfrac{1}{|n-m|^3}
				\\&\quad  \le
				C	C_{\mathrm{inv}}^2(h)e^{\Im \omega_{h,K}  t} 
				\sum_{m=-K}^K \dfrac{1}{(1+|m|^2)(K+1-m)^2}
				\le 	C	\dfrac{C_{\mathrm{inv}}^2(h)}{K^2}e^{\Im \omega_{h,K}  t}.
			\end{align*}
		\end{proof}
		\begin{remark}\label{rem:delta-better}
			The factor $e^{\normalfont{\Im} \omega_{h,K}  t}$ appears in multiple estimates, due to its appearance in \eqref{eq:bound-delta}. At this point, this factor can be interpreted as problematic, since  $|\omega_{h,K}|$ is only bounded by Lemma~\ref{lem:bound-omega-K-h}, which includes an inverse estimate. Combining Lemma~\ref{lem:big-U} with the energy estimate in Proposition~\eqref{prop:energy-estimate-h}, however, yields the bounds of Theorem~\ref{thm:truncated_Floquet-h} which explicitly estimates the imaginary part of $\omega_{h,K}$. Inserting these into the right-hand side of \eqref{eq:bound-delta}, yields 
			\begin{align}\label{eq:bound-delta-better}
				\sup_{\|v_h\|_{L^2(D)}=1}
				|\left( \nabla v_h, \nabla \delta_{h,K}^\omega(t)\right)_{L^2(D)}|
				\le 
				C	\dfrac{C_{\mathrm{inv}}^4(h)}{K^2}e^{C_{\kappa}'  t},
			\end{align}
			under the assumptions of Theorem~\ref{thm:truncated_Floquet-h} (for sufficiently large $K$).
		\end{remark}
		The computational construction of Floquet--Bloch modes via the system \eqref{eq:block-EV-h} can be understood as approximations of special initial values, whose solution of the initial value problem \eqref{eq:weak-absorption-h} without excitation then almost fulfill the form of the Floquet--Bloch mode \eqref{eq:Bloch}. 
		Motivated by this, we define an alternative approximation of the same Floquet--Bloch mode, we define $u^{h,K,\omega}_{\mathrm{init}}$ as that solution to the initial value problem \eqref{eq:weak-absorption-h} that initially agrees with $	U^{\omega}_{h,K}$, namely 
		\begin{equation}\label{eq:init-u-init}
			u^{h,K,\omega}_{\mathrm{init}}(0)=U^{\omega}_{h,K} (0)
			\quad\text{and}\quad 
			\partial_t u_{\mathrm{init}}^{h,K,\omega} (0)= \partial_t U^{\omega}_{h,K}(0).
		\end{equation}
		By classical arguments based on semigroup theory, this is a well-posed definition (see \cite{K73}). The following result gives some basic properties of those functions. Initially, both functions agree, but since $	U^{\omega}_{h,K}$ is not an exact solution of the time-modulated acoustic wave equation, a cumulative difference gradually arises between them. The following result explicitly estimates their difference.	
		\begin{proposition}\label{prop:td-sol-differences}
			Under the assumptions of Theorem~\ref{thm:localization}, we consider $u^{h,K,\omega}_{\mathrm{init}}(t)$ defined by the initial value problem with the initial conditions \eqref{eq:init-u-init} and $U^{\omega}_{h,K}$ as it is characterized in Lemma~\ref{lem:big-U}.
			Let $d(t)=	u^{h,K,\omega}_{\mathrm{init}}(t)- U^{\omega}_{h,K}(t)$ denote the difference of both approximations.
			We then have the estimate
			\begin{align*}
				&	\left\| \partial_t d(t)
				\right\|_{L^2(D)}+
				\left\| \nabla	d(t)
				\right\|_{L^2(D)}
				\le 	C   	\dfrac{C_{\mathrm{inv}}^4(h)}{K^2} .
			\end{align*}
			The function $u^{h,K,\omega}_{\mathrm{init}}(t)$ therefore fulfills the quasi-periodic temporal boundary conditions up to a defect, namely
			\begin{align*}
				\|\normalfont{ \Re} u^{h,K,\omega}_{\mathrm{init}}(0)e^{-i\omega_{h,K}T}-\normalfont{ \Re} u^{h,K,\omega}_{\mathrm{init}}(T)\|_{L^2(D)}\le 	C   	\dfrac{C_{\mathrm{inv}}^4(h)}{K^2}.
			\end{align*}
			The constant $C$ is crucially independent of $h$ and $K$, but depends on $t$,$T$,$\Omega$, $\kappa$ and $D$.
		\end{proposition}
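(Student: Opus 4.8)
The plan is to realize the difference $d(t)=u^{h,K,\omega}_{\mathrm{init}}(t)-U^{\omega}_{h,K}(t)$ as the solution of a \emph{forced} discrete evolution problem with vanishing initial data, and then to apply the energy machinery of Proposition~\ref{prop:energy-estimate-h} to the forcing supplied by Lemma~\ref{lem:big-U}. First I would subtract the two weak formulations: the function $u^{h,K,\omega}_{\mathrm{init}}$ solves \eqref{eq:weak-absorption-h} with $f=0$ exactly, while by Lemma~\ref{lem:big-U} the Bloch approximation $U^{\omega}_{h,K}$ solves the same formulation only up to the defect $(\nabla v_h,\nabla\delta^\omega_{h,K}(t))_{L^2(D)}$. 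Hence $d$ satisfies \eqref{eq:weak-absorption-h} with right-hand side $-(\nabla v_h,\nabla\delta^\omega_{h,K}(t))_{L^2(D)}$, and because the initial conditions \eqref{eq:init-u-init} are matched we have $d(0)=0$ and $\partial_t d(0)=0$, so that its discrete energy \eqref{eq:energy-h} satisfies $\mathcal E_d(0)=0$.

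Next I would feed this defect into the energy estimate. The subtlety is that the forcing appears in gradient form, $(\nabla v_h,\nabla\delta^\omega_{h,K})$, rather than as an $L^2(D)$ source $(v_h,f)$; on the finite-dimensional space $X_h$, however, the defect functional may be identified, via Riesz representation in the $L^2(D)$ inner product, with a source $\tilde f(t)\in X_h$ whose norm is exactly the dual norm bounded in \eqref{eq:bound-delta}. Under the assumptions of Proposition~\ref{prop:truncated_Floquet-h} the sharper exponent of Remark~\ref{rem:delta-better} applies, giving $\|\tilde f(t)\|_{L^2(D)}\le C\,C_{\mathrm{inv}}^4(h)K^{-2}e^{C_\kappa' t}$. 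Applying Proposition~\ref{prop:energy-estimate-h} to $d$ with $\mathcal E_d(0)=0$ then yields
\[
\mathcal E_d(t)\le \frac{e^{C_\kappa'(t+T)}}{4C_\kappa'}\int_0^t \|\tilde f(s)\|_{L^2(D)}^2\,\mathrm ds\le C\,\frac{C_{\mathrm{inv}}^8(h)}{K^4},
\]
where the exponential factors are absorbed into a constant depending only on $t,T,\kappa$. Invoking the lower bound $\kappa\ge c_\kappa$ from \eqref{assumpt-kappa} in the definition of $\mathcal E_d$ and taking square roots then produces the first asserted estimate $\|\partial_t d(t)\|_{L^2(D)}+\|\nabla d(t)\|_{L^2(D)}\le C\,C_{\mathrm{inv}}^4(h)K^{-2}$.

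For the quasi-periodicity defect I would exploit that $U^{\omega}_{h,K}(t)=\sum_{|n|\le K}(\widehat u^\omega_{h,K})_n e^{-i(\omega_{h,K}+n\Omega)t}$ satisfies the quasi-periodic relation $U^{\omega}_{h,K}(T)=U^{\omega}_{h,K}(0)e^{-i\omega_{h,K}T}$ exactly, since the $T$-periodic factor returns to itself after one period. Because $u^{h,K,\omega}_{\mathrm{init}}(0)=U^{\omega}_{h,K}(0)$, the periodicity residual of $u^{h,K,\omega}_{\mathrm{init}}$ reduces to $-d(T)$, and using $d(0)=0$ together with $d(T)=\int_0^T\partial_t d(s)\,\mathrm ds$ I bound $\|d(T)\|_{L^2(D)}\le T\sup_{s\in[0,T]}\|\partial_t d(s)\|_{L^2(D)}\le C\,C_{\mathrm{inv}}^4(h)K^{-2}$; since $\|\Re(\cdot)\|_{L^2(D)}\le\|\cdot\|_{L^2(D)}$, this gives the second estimate. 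The main obstacle is the second step: the defect is only controlled in gradient form, so one must pass through the inverse estimate (which is precisely where the power $C_{\mathrm{inv}}^4$ rather than $C_{\mathrm{inv}}^2$ originates) and, crucially, ensure that the growth factor $e^{\Im\omega_{h,K}t}$ is replaced by the $h,K$-uniform bound $e^{C_\kappa' t}$ of Remark~\ref{rem:delta-better} rather than the weaker inverse-estimate bound of Lemma~\ref{lem:bound-omega-K-h}, so that the final constant remains independent of $h$ and $K$.
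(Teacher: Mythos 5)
Your proposal is correct and follows essentially the same route as the paper: identify $d$ as the solution of the forced discrete evolution problem with vanishing initial data, bound the forcing by the defect estimate of Lemma~\ref{lem:big-U} in the sharpened form of Remark~\ref{rem:delta-better}, and apply the energy estimate of Proposition~\ref{prop:energy-estimate-h}. You in fact supply more detail than the paper does, in particular the Riesz-representation step converting the gradient-form defect into an $L^2$ source and the reduction of the quasi-periodicity residual to $-d(T)$, both of which are consistent with the paper's (very terse) argument.
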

		\begin{proof}
			The difference $d(t)$ fulfills, by construction,  the initial value problem in Lemma~\ref{lem:big-U} with the vanishing initial conditions. Applying the energy estimate \eqref{prop:energy-estimate-h} then yields the result using \eqref{eq:bound-delta-better} (obtained with Theorem~\ref{thm:truncated_Floquet-h}, as discussed in Remark~\ref{rem:delta-better}) to bound the remainder term.
			%	\begin{align*}
				%	\left\| \partial_t d(t)
				%\right\|^2_{L^2(D)}+
				%\left\| \nabla	e(t)
				%\right\|^2_{L^2(D)}
				%\le 		C e^{C_{\kappa} T} \int_0^{T}  	\dfrac{C_{\mathrm{inv}}^8(h)}{K^4}e^{\normalfont{\Im} \omega_{h,K}  s} \, \mathrm ds,
				%	\end{align*}
			%	which gives the result.
			%	\begin{align*}
				%		 e^{C_{\kappa} T} \bigg(1+ \int_0^{T}   C_{\kappa}^{-1}C^4_{\mathrm{inv}}(h)\| \delta^\omega_{h,K} (s)\|^2_{L^2(D)} \mathrm d s\bigg)
				%	\end{align*}
		\end{proof}
		\subsection{Restrictions on $\omega_{h,K}$}
		The results of Lemmas~\ref{lem:FL-multipliers-absorption}--\ref{lem:FL-multipliers-neumann} extend to the fully discrete approximations $\omega_{h,K}$, under the assumptions of Theorem~\ref{thm:localization} that ensure the localization property \eqref{eq:local}. 
		
		\begin{theorem}\label{thm:truncated_Floquet-h}
			Let the setting of Theorem~\ref{thm:localization} hold and 
			%, including the condition \eqref{eq:cond-n} for $K$, namely we assume that 
			%\begin{align}\label{eq:cond-n}
			%	K> C_0C_{\mathrm{inv}}(h),
			%\end{align}
			%for some constant $C_0$, which is independent of $h$ and $K$. 
			further assume that $\kappa \in C^3_{\mathrm{per}}([0,T];L^{\infty(D)})$.
			Consider the eigenvalues of the system \eqref{eq:block-EV-h} in the first Brillouin zone with the absorbing boundary condition \eqref{eq:Absorbing}. 
			Then, there exists a constant $C_0$ independent of $h$ and $K$ such that for
			\begin{align}\label{eq:cond-loc}
				K > C_0C_{\mathrm{inv}}^2(h),
			\end{align}
			we have the following result.
			The fully discrete resonant quasi-frequencies lie on a complex half-space, namely
			\begin{align*}
				\normalfont\Im \omega_{h,K} \le {C_{\kappa}'}.
			\end{align*}
			%			\begin{align*}
				%				\normalfont\Im \omega_{h,K} \le C \dfrac{ C^8_{\mathrm{inv}}(h)}{ K^{4}}.
				%			\end{align*}
			Moreover, in the case of homogeneous Neumann boundary condition \eqref{eq:Neumann}, we have that the discrete approximations $\omega_{h,K}$ are in strip near the real line, namely
			\begin{align*}
				|	\normalfont\Im \omega_{h,K}| \le {C_{\kappa}'}.
			\end{align*}   
			We note that the constant ${C_{\kappa}'}$ is defined in Proposition~\ref{prop:energy-estimate}.
		\end{theorem}
		\begin{proof}
			
			%			Moreover, the localization \eqref{eq:local} implies the estimate
			%			\begin{align}\label{eq:decay-b}
				%				\left\|b^\omega_{h,K}\right\|_{D}
				%				\le 	C\dfrac{ C^2_{\mathrm{inv}}(h)}{ K^2}e^{\Im \omega_{h,K}  t}.
				%			\end{align}
			%			
			%			Here, the constant $C$ is independent of $u^{\text{per}}_K$ and $\omega_{h,K} $.
			The energy of the Floquet--Bloch solution (more precisely, the energy of $ U^{\omega}_{h,K} (t) $) fulfills, due to the quasi-periodicity, the identity
			\begin{align}\label{eq:energy-Bloch}
				\mathcal E_h(T) = 	e^{ \Im\omega_{h,K}  2T}\mathcal E_h(0).
			\end{align}
			
			Without loss of generality, we assume $\mathcal E_h(0) = 1$.
			From the energy estimate with excitation in Proposition~\ref{prop:energy-estimate-h} (which is slightly stronger at a full period $t=T$) and estimate \eqref{eq:bound-delta} on the remainder, we then obtain 
			%				\begin{align*}
				%			e^{ \,\Im\omega 2nT}
				%				&\le e^{C_{\kappa} nT} \left( 1
				%				+ \int_0^{nT}  \frac{(1+s^2)\exp(\pi/2)}{2}|| f_K(s)||^2_{L^2(D)} \mathrm d s\right)
				%			\\ & \le
				%			e^{C_{\kappa} nT} \left( 1
				%			+\dfrac{C^2}{K^2} nT \frac{(1+(nT)^2)\exp(\pi/2)}{2}e^{2\Im \omega nT} \right),
				%			\end{align*}
			\begin{align}\label{eq:before-square root}
				\begin{aligned}
					e^{ 2\Im\omega_{h,K}  T}
					&\le 
					e^{C_{\kappa}' T} \bigg(1+ \int_0^{T}   \dfrac{C^4_{\mathrm{inv}}(h)}{4C_{\kappa}'}\| \delta^\omega_{h,K} (s)\|^2_{L^2(D)} \mathrm d s\bigg)
					%	\\ & \le
					%1
					%	+\dfrac{C^2}{K^2} T \frac{(1+T^2)\exp(\pi/2)}{2} e^{C_{\kappa} T} 
					%	e^{2\Im \omega_{h,K}  T} 	.
					\\ & \le e^{C_{\kappa}'T} 
					\bigg(1+C
					\dfrac{ C^8_{\mathrm{inv}}(h)}{ K^4}
					e^{2\Im \omega_{h,K}  T}
					\bigg) .
				\end{aligned}
			\end{align}
			%			Rearranging at $l=1$ yields, for
			%			\begin{align*}
				%			1- +C
				%			\dfrac{ C^8_{\mathrm{inv}}(h)}{ K^4}
				%			e^{2\Im \omega_{h,K} l T} \le e^{-C_{\kappa}'t}
				%			\end{align*}
			%			\begin{align*}
				%				\begin{aligned}
					%				(1-)	e^{ 2\Im\omega_{h,K}  T}
					%					&\le 
					%					e^{C_{\kappa} lT} \bigg(1+ \int_0^{lT}   C_{\kappa}^{-1}C^4_{\mathrm{inv}}(h)\| b^\omega_{h,K} (s)\|^2_{L^2(D)} \mathrm d s\bigg)
					%					%	\\ & \le
					%					%1
					%					%	+\dfrac{C^2}{K^2} T \frac{(1+T^2)\exp(\pi/2)}{2} e^{C_{\kappa} T} 
					%					%	e^{2\Im \omega_{h,K}  T} 	.
					%					\\ & \le e^{C_{\kappa}lT} 
					%					\bigg(1+C
					%					\dfrac{ C^8_{\mathrm{inv}}(h)}{ K^4}
					%					e^{2\Im \omega_{h,K} l T}
					%					\bigg) .
					%				\end{aligned}
				%			\end{align*}
			We now derive the restriction \eqref{eq:cond-loc} on $K$, where the constant $C_0$ is determined such that it ensures that
			\begin{align*}
				e^{-C_{\kappa}'T}
				\le  1-				\dfrac{ C C^8_{\mathrm{inv}}(h)e^{C_{\kappa}'T}}{ K^4} .
			\end{align*}
			Rearranging \eqref{eq:before-square root} and using this condition then yields
			\begin{align*}
				e^{-C_{\kappa}'T}	e^{ 2\Im\omega_{h,K}  T}\le e^{C_{\kappa}'T} .
			\end{align*}

			%			Here, the constant $ C$ depends on the final time $T$ and $\kappa$, the constant $c$ is derived from the first term of the energy estimate and depends on $\kappa$. For any $\omega_{h,K}$ away from the real line (for which the statement is trivial) and for any $K$, we choose the arbitrary parameter $l$ large enough, such that the summand $clT$ can be absorbed in the last summand above.
			%
			%			%		\begin{align*}
				%				%			e^{ 2\Im\omega_{h,K}  T}
				%				%			&\le e^{C_{\kappa} T} \left( 1
				%				%			+ \int_0^{T}  \frac{(1+s^2)\exp(\pi/2)}{2}|| b_K(s)||^2_{L^2(D)} \mathrm d s\right)
				%				%			\\ & \le
				%				%			e^{C_{\kappa} T} \left( 1
				%				%			+\dfrac{C^2}{K^2} T \frac{(1+T^2)\exp(\pi/2)}{2}e^{2\Im \omega_{h,K}  T} \right).
				%				%		\end{align*}
			%			For sufficiently large $K$, namely the condition $K>C_0C^2_{\mathrm{inv}}(h)$, where $C_0$ is set large enough for the above term to be absorbed, we therefore obtain
			%			$$
			%			e^{ 2\Im\omega_{h,K} l T}\le (1-\tfrac{ CC^8_{\mathrm{inv}}(h)}{K^{4}})^{-1},
			%			$$
			%			for all $l \in \mathbb N$. Since the right-hand side is independent of $l$, we conclude that $ \Im\omega_{h,K} \le 0$. 
			%			
			The statement for the Neumann problem is deduced in the same way and completed by using the same argument for the time-reversed solution.
			%namely such that we have $$ \dfrac{C^2}{K^2} 	e^{C_{\kappa} T} \frac{(T+T^3)\exp(\pi/2)}{2} \le 1- e^{-C_{\kappa} T},$$
			%			which finally implies, with the scalar inequality $\log(x)\le1-x$ for $x>0$, the estimate
			%			\begin{align*}
				%				\Im\omega_{h,K} \le \dfrac{-\log(1-\tfrac{ CC^4_{\mathrm{inv}}(h)}{K^{4}})}{2T}\le \dfrac{ CC^4_{\mathrm{inv}}(h)}{2 T K^{4}}.
				%			\end{align*}
		\end{proof}		
		\begin{remark}[On the restriction of $K$]\label{rem:restrict-K}
			We note that the assumption on $K$ in Theorem~\ref{thm:truncated_Floquet-h} is generally quite severe. For localized modes, this assumption can be relaxed. Let $\omega_{h,K}$ be in the first Brillouin zone and associated to a more regular eigenmode $\widehat{u}^\omega_{h,K}$. Further, let the spectrum of this mode be more localized than the estimate in Theorem~\ref{thm:localization} guarantees, namely for some constant $C_{\mathrm{loc}}$ and $p\ge 1$,
			\begin{align*}
				\left\|\left(\widehat{u}^\omega_{h,K} \right)_n \right\|_{L^2(D)}\le \dfrac{C_{\mathrm{loc}}}{1+|n|^p}
				\quad \text{and}\quad 
				\kappa \in C^{p+1}_{\mathrm{per}}(0,T).
			\end{align*} 
			By following the proof of Theorem~\ref{thm:truncated_Floquet-h}, we obtain a constant $C_0$ such that under the condition
			\begin{align*}
				K^{p} > C_0C_{\mathrm{loc}} C_{\mathrm{inv}}^2(h),
			\end{align*}
			the result of Theorem~\ref{thm:truncated_Floquet-h} holds. The original condition \eqref{eq:cond-loc} is then the special case of $C_{\mathrm{loc}}=C_{\mathrm{inv}}^2(h)$ and $p=2$, which is ensured by Theorem~\ref{thm:localization}, although numerical evidence (see Figure~\ref{Fig:localization}) suggests that many modes fulfill stronger conditions.			
		\end{remark}

		\subsection*{A discrete folding property}
		From the explicit form of the Floquet--Bloch solution \eqref{eq:Bloch}, we note that $\omega\in\mathbb C$ and $u^\omega\in V_{\mathrm{per}}$ identify the same mode via
		\begin{equation}\label{eq:cont-folding}
			\omega = \omega+l\Omega \in \mathbb C\quad \text{and}\quad u^\omega_l = e^{il\Omega t}u^\omega\in V_{\mathrm{per}},
		\end{equation} 
		for arbitrary $l\in  \mathbb Z$, leading to the same expression in \eqref{eq:Bloch}. We rewrite this property in terms of the Fourier coefficients of a quasi-resonant mode.

        		We denote by $\mathrm{F}^l : \VK\rightarrow \VK$ the operator determined by \eqref{eq:cont-folding} in the Fourier domain, determined by
		\begin{equation}\label{Folding-op}\mathrm{F}^l \widehat v = (\widehat v_{n-l})_{n=-K}^K,
		\end{equation} where the entries of $\widehat v$ are periodically extended via $$\widehat v_{n+j(2K+1)} = \widehat v_n,
        \quad \text{ for all }j\in \mathbb Z.$$ 
		For a given kernel element $(\omega,u^\omega)$  of the left-hand side of the system with infinitely many harmonics \eqref{eq:weak-Floquet-freq} (i.e., setting $K=\infty$), we have that for all $ l \in \mathbb Z$ it holds that
		\begin{align}\label{eq:folding}
			\sup_{\|\widehat v \|_{\VK} = 1} \left| a_{\omega+l\Omega}\left(\mathrm{F}^l\widehat{u}^\omega \, ,\, \widehat{v}\right) \right| =0.
		\end{align}
		
		Such a property cannot generally be expected from the truncated system \eqref{eq:weak-Floquet-freq-K}. A weaker result can be shown: For modes that are localized, we find that the residuum of the folded quasi-frequencies tends to zero, as $K\rightarrow \infty$ for a fixed $l$ . The following \JN{theorem} gives this result.
		\begin{theorem}\label{thm:discrete-folding}
			Consider the setting of Theorem~\ref{thm:localization} and let the time-modulation be slightly more regular than assumed in \eqref{assumpt-kappa}, namely assume $\kappa \in C^3_{\mathrm{per}}([0,T];L^{\infty}(D))$. 
			Furthermore, let $l\le K$ be constant and $\mathrm{F}^l$ denote the associated folding operator defined in \eqref{Folding-op}. 
			Then, there exists a constant $C_0$ independent of $h$ and $K$, such that for
			\begin{align*}
				K > C_0C_{\mathrm{inv}}^2(h),
			\end{align*}
			we have that
			\begin{align}\label{eq:discrete-folding}
				\sup_{\|\widehat v_h \|_{\VhK} = 1} \left| a_{\omega^l_{h,K}}^K\left(\mathrm{F}^l\widehat{u}^\omega_{h,K} \, ,\, \widehat{v}_h\right) \right| \le 	C	\dfrac{C_{\mathrm{inv}}^4(h)}{K^2}.
			\end{align}
			Here we used the notation $\omega^l_{h,K} = \omega_{h,K}+l\Omega$ for the folded discrete resonant quasi-frequency.
			The constant $C$ on the right-hand side is independent of $K$ and the space discretization, but depends on $D$, $T$, $\Omega$ and $\kappa$. The constant $C_{\mathrm{inv}}(h)$ is defined in \eqref{eq:inverse-estimate}.
		\end{theorem}
		\begin{proof}
			The terms appearing here are a subset of the remainder estimated in Proposition~\ref{prop:td-sol-differences}, which gives the result by using Theorem~\ref{thm:truncated_Floquet-h} as described in Remark~\ref{rem:delta-better}. 
			%					\begin{align*}
				%			\sup_{\|\widehat v_h \|_{\VhK} = 1} \left| a_{\omega^l_{h,K}}^K\left(\mathrm{F}^l\widehat{u}^\omega_{h,K} \, ,\, \widehat{v}_h\right) \right| \le 	C	\dfrac{C_{\mathrm{inv}}^4(h)}{K^2}e^{\normalfont{\Im} \omega_{h,K}  T}
				%			\le 	C	\dfrac{C_{\mathrm{inv}}^4(h)}{K^2}e^{C_{\kappa}' T},
				%		\end{align*}		
			%		where we used the bounds of Theorem~\ref{thm:truncated_Floquet-h} in the last step.
		\end{proof}
        
		\section{Implementation and numerical experiments}
		As a toy problem, we consider the simple one-dimensional setting, with the reduced form of \eqref{eq:time-varying-acoustic}, which reads
		\begin{align*}
			\partial_t^2  u(x,t) - \kappa(t)\partial_{xx} u(x,t)  = 0, \quad x\in (-1,1), \,t \ge 0 .
		\end{align*}
		On the boundary $\{-1,1\}$, we either impose the absorbing boundary condition \eqref{eq:Absorbing} with $\kappa_0=\tfrac{1}{2}$ or the homogeneous boundary condition \eqref{eq:Neumann}.
		As the time modulation, we choose a function with infinitely many active Fourier nodes, namely
		\begin{align*}
			\kappa(t) = 1+ \tfrac{1}{10}e^{\cos(t)},
		\end{align*}
		which has the period $\Omega=2\pi$. We seek Floquet exponents of this system, which are approximated by the eigenvalues of the fully discrete block system \eqref{eq:block-EV-h}.
		
		Generally, in the dimensions $d=2,3$, the classical Galerkin subspaces $V_h$ of interest are sets of piecewise polynomials on a mesh, discretizing the domain $D$. Here, we validate our results with a one-dimensional setting. 
		
		As the finite-dimensional subspace $V_h$ , we use a spectral Galerkin ansatz, namely we set
		$
		V_h = \mathcal P_p ([-1,1]),
		$
		which denotes the set of polynomials of maximal degree $p\in \mathbb N$. More details on spectral Galerkin methods and their implementation can be found in \cite{T00}. In the following, we outline the setup of our experiments, whose results are presented in Figures~\ref{Fig:Discrete-spectrum}--\ref{Fig:localization}.
		
		\begin{figure}[h]
			\centering
			\hspace*{-1.2cm}
			%\vspace*{-0.3cm}
			\includegraphics[scale=0.6]{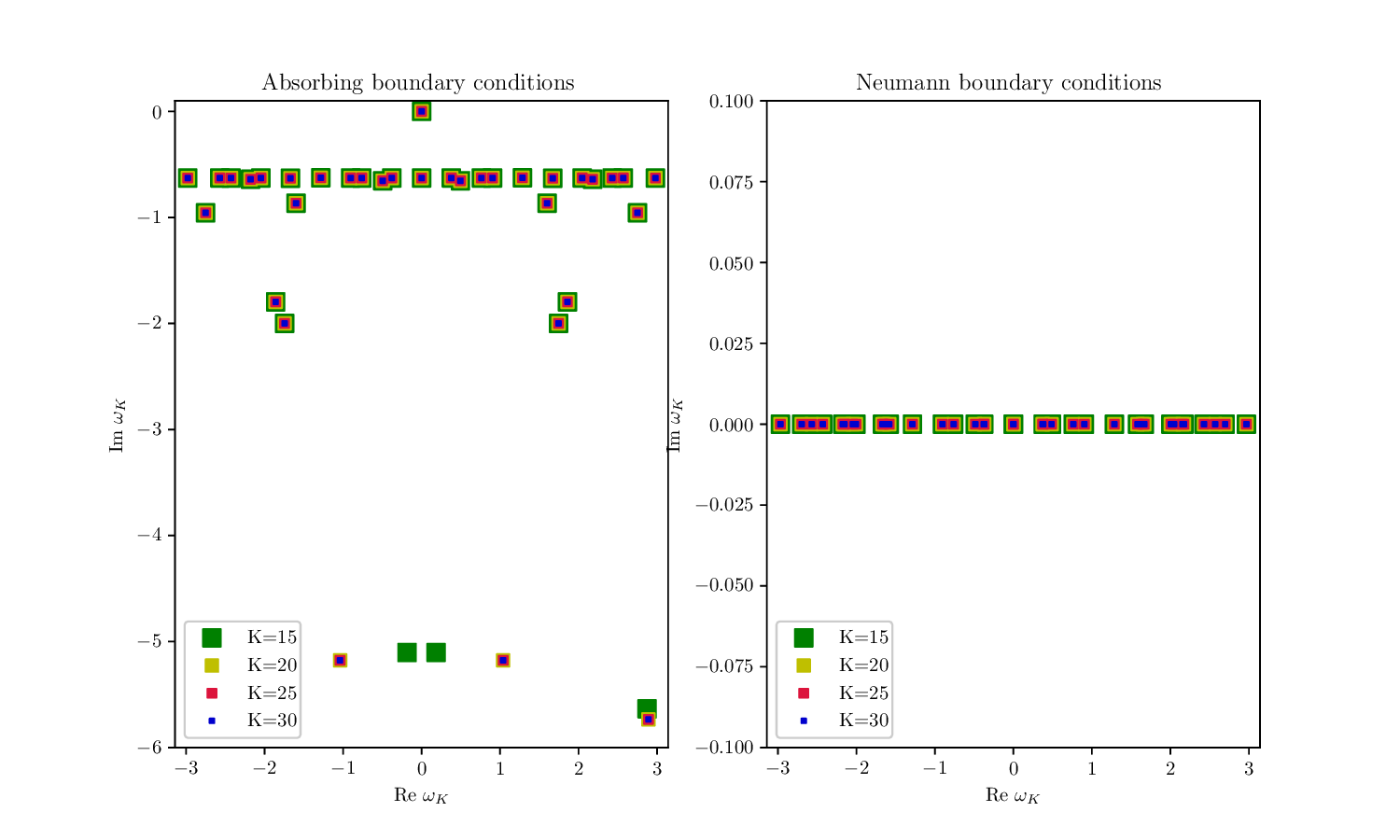}
			\caption{A visualization of those discrete eigenvalues of the system \eqref{eq:block-EV-h}, whose real part lies in the first Brillouin zone (i.e., $\text{Re}\, \omega_{h,K}  \in [-\pi,\pi]$. The plot was generated with a spectral Galerkin discretization of degree $p=15$.}
			\label{Fig:Discrete-spectrum}
		\end{figure}
		\emph{A view on the fully discrete spectrum:} Our first results are shown in Figure~\ref{Fig:Discrete-spectrum}, which shows all discrete eigenvalues whose real part are within the first Brillouin zone $(-\Omega/2,\Omega/2]=(-\pi,\pi]$. Here, we used a fixed Galerkin subspace of degree $p=10$ and varying truncation $K$. On the left, we consider the case of the absorbing boundary condition and on the right plot, we imposed homogeneous Neumann boundary conditions. For sufficiently large $K$, the approximations show a reasonable agreement. Both systems have a Floquet exponent in the origin (which corresponds to the constant function $u(x,t) = \chi_D(x)$) and the discrete Floquet exponents lie in the predicted regions of Theorem~\ref{thm:truncated_Floquet-h}. 
		\begin{figure}
			\centering
			\hspace*{-1.2cm}
			%\vspace*{-0.3cm}
			\includegraphics[scale=0.5]{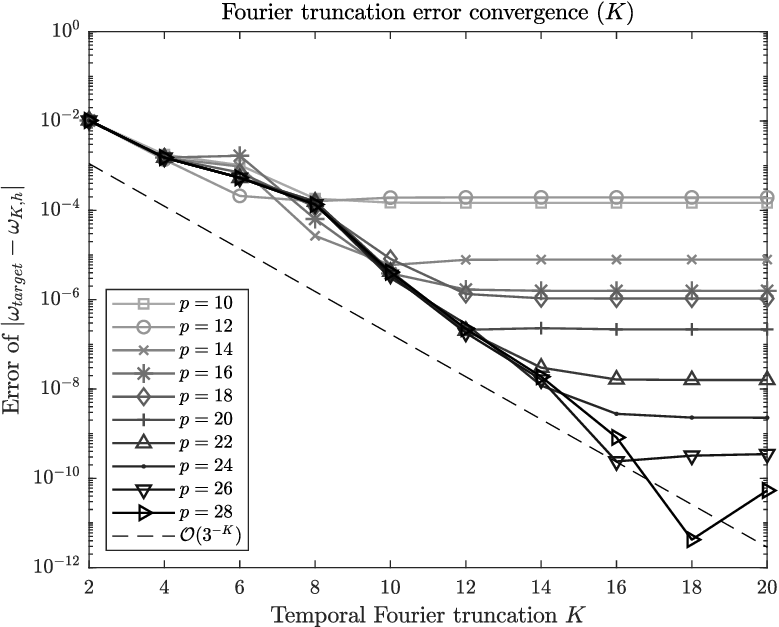}
			\caption{Convergence plot of a target eigenvalue $\omega\approx-0.976i$ with respect to the Fourier parameter $K$, with varying spectral Galerkin space discretizations.}
			\label{Fig:conv-plot-K}
		\end{figure}
		
		\begin{figure}
			\centering
			\hspace*{-1.2cm}
			%\vspace*{-0.3cm}
			\includegraphics[scale=0.5]{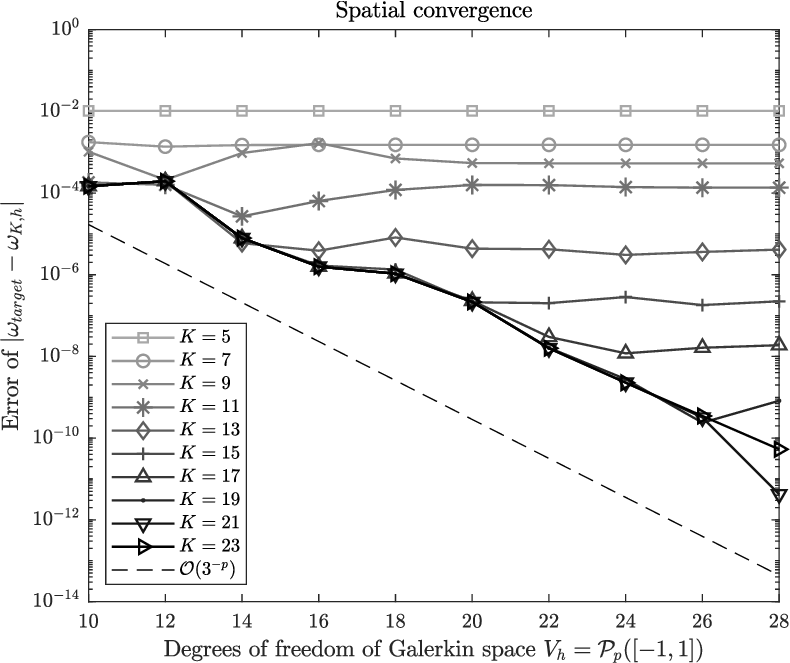}
			\caption{Convergence plot of a target eigenvalue $\omega\approx-0.976i$ with respect to the degrees of freedom of the spectral Galerkin space discretization, for varying  Fourier truncations.}
			\label{Fig:conv-plot-p}
		\end{figure}
		
		\emph{Convergence plots with respect to $K$ and $p$:} We continue with convergence plots for the fully discrete system with absorbing boundary conditions, which are found in Figures~\ref{Fig:conv-plot-K}--\ref{Fig:conv-plot-p}. We compare a target eigenvalue of different discretizations of the system \eqref{eq:block-EV-h} with an eigenvalue computed with a reference solution, where we chose $K=30$ and $p=40$. As the target eigenvalue, we choose the value directly below the origin in the left plot of Figure~\ref{Fig:Discrete-spectrum}, which is roughly located around ${\omega\approx-i}$. Figure~\ref{Fig:conv-plot-K} now fixes different space discretizations (different $p$) and varies the Fourier truncation parameter $K=2,4,6,\dots,20$. The approximation error decays exponentially, roughly with the rate $3^{-K}$, until the space discretization error dominates. Conversely, Figure~\ref{Fig:conv-plot-p} fixes different truncation parameters $K$ and varies the polynomial degree of the spectral Galerkin method. As $p$ increases, we again observe an exponential decay of the approximation error, roughly of the same order. \JN{In view of the slowing decay of the Fourier coefficients observed in Figure~\ref{Fig:eigenmodes} for $p\rightarrow \infty$, we suspect that the error behavior corresponds to a particularly well-behaved mode, which fulfills a stronger decay estimate than the worst-case estimate of Theorem~\ref{thm:localization}. In general, this can not be expected and with our current results, we cannot explain this behavior.}

        \JN{
		Figures~\ref{Fig:conv-plot-K}--\ref{Fig:conv-plot-p} indicate that the approximate Floquet exponents converge when both $p,K\rightarrow \infty$. This behavior can, so far, not be explained theoretically due to several significant obstacles:
        \begin{itemize}
        \item The set of Floquet exponents is generally a continuous spectrum (see Appendix~\ref{sect:time-independent}). Single Floquet exponents can therefore not be isolated, which complicates the convergence analysis.
        \item The weak formulation of the eigenvalue problem \eqref{eq:weak-time-Floquet} does not seem to be associated to an operator that has the classical decomposition "coercive + compact". This is the usual route to establish regular convergence of the discrete operators (see, e.g., \cite{H21}) and would enable convergence results (see \cite{K96a}). 
        \item The defects in Lemma~\ref{lem:big-U}, Proposition~\ref{prop:td-sol-differences} and Theorem~\ref{thm:discrete-folding} converge, under the stated conditions of the relation of $p$ and $K$, to zero. However, the overall rate of convergence is only polynomial of low degree, much slower than the observed exponential rate (with respect to $K$, when setting $K\propto p^3$). Here, some form of regularity of the approximated mode $\widehat{u}^\omega$ could be used to derive a stronger form of the localization estimate of Theorem~\ref{thm:localization}, along the lines of Remark~\ref{rem:restrict-K}. 
        \end{itemize}
 		}
		\begin{figure}
			\centering
			%\hspace*{-4cm}
			%\vspace*{-0.5cm}
			%\hspace*{-2cm}
			\includegraphics[scale=0.5]{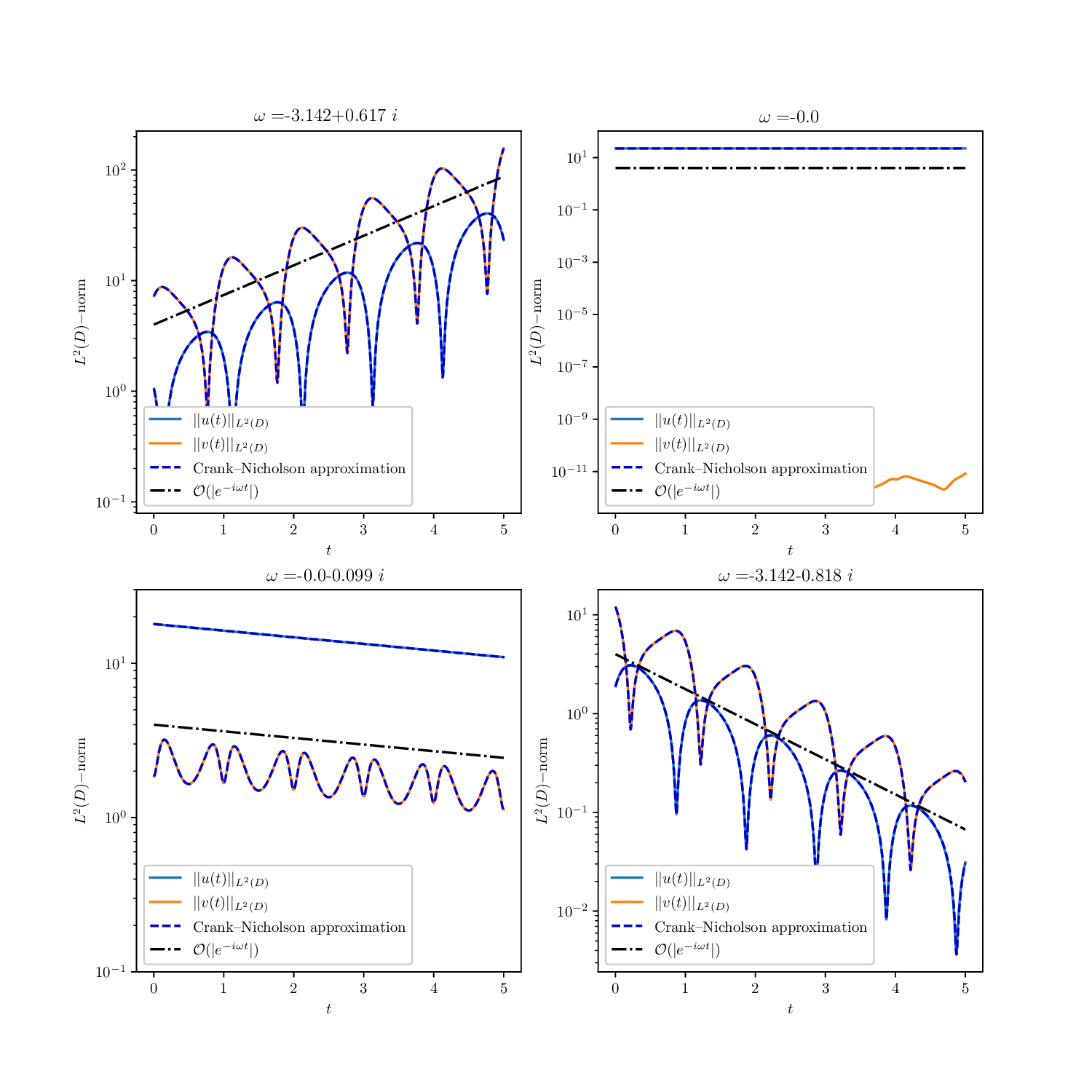}
			\vspace*{-1cm}
			\caption{Time-dependent norms of the solutions corresponding to different eigenmodes of the system \eqref{eq:block-EV-h} with $K=25$ and $p=20$, as well as a numerical approximations based on a Crank--Nicholson scheme with $N=2000$ timesteps, applied to \eqref{eq:weak-absorption-h}.}
			\label{Fig:time_Floquets}
		\end{figure}
		\begin{figure}
			\centering
			%\hspace*{-4cm}
			%\vspace*{-0.5cm}
			\hspace*{-1cm}
			\includegraphics[scale=0.5]{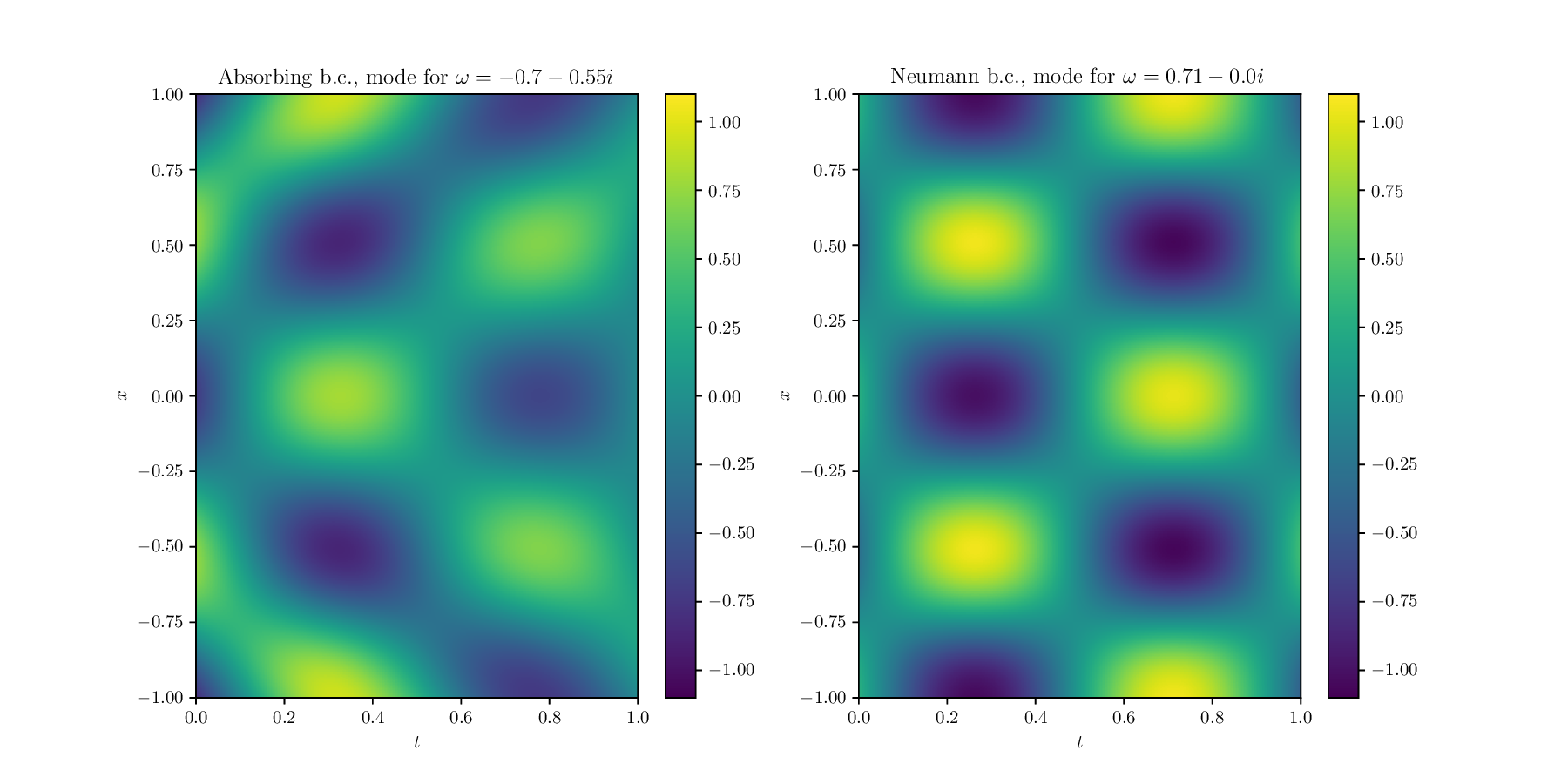}
			\caption{Two Floquet solutions, of the absorbing boundary condition \eqref{eq:Absorbing} on the left plot and the Neumann boundary condition \eqref{eq:Neumann} on the right on a single period $[0,T]$. %On the left plot we have a slow exponential decay, whereas on the right-hand side the amplitude is conserved.
            }
			\label{Fig:eigenmodes}
		\end{figure}
		\emph{Validation in the time domain:} We validate our approximations of Floquet solutions $u^\omega$ for a few values of $\omega$, by approximating the solutions of the initial value problem, whose initial values are determined by the evaluation of Floquet solutions corresponding to different $\omega$ at $t=0$. More precisely, we use a Crank--Nicholson time integration scheme to approximate the function $u^{h,K,\omega}_{\mathrm{init}}$, whose difference to the temporal mode associated with the approximated Fourier coefficients $\widehat{u}^\omega_{h,K}$ was analyzed in Proposition~\ref{prop:td-sol-differences}. In order to observe exponentially growing modes, we use $\kappa(t) = e^{\cos(2\pi t)}$ and crucially use the smaller parameter $\kappa_0=0.1$, which reduces the absorption from the absorbing boundary conditions. The solutions are then approximated over $5$ periods (until the final time $\widetilde T = 5T=5$) and the norms of the respective solutions are shown in Figure~\ref{Fig:time_Floquets}. We observe that the predicted decay of the Floquet exponent and the mode align well with the solution generated by the time integration scheme. As predicted by Proposition~\ref{prop:td-sol-differences} (for sufficiently large $K$), their difference is small. This strong agreement is observed despite the neglection of the restrictive assumption on $K$ (in Proposition~\ref{prop:td-sol-differences}), since the plots were created with $K=25$ and $p=20$. This may be caused by a stronger localization property (than the worst-case estimate \eqref{eq:local}) of the visualized modes.
		
		Figure~\ref{Fig:eigenmodes} visualizes two Floquet solutions, which are inhibited in the system with absorbing boundary conditions \eqref{eq:Absorbing} or Neumann boundary conditions \eqref{eq:Neumann}. The visualized approximations have been computed with $K=25$ and $p=10$.
		
		\emph{On frequency localization of eigenmodes:} Truncating the system \eqref{eq:weak-Floquet-freq-K} with a finite $K$ introduces a defect whose magnitude depends on the decay of the Fourier coefficients of both $\kappa$ and $u_\omega$. %Only if both components decay sufficiently fast, we can expect good approximations from the truncated system. 
       While the decay of $\widehat \kappa_n$ is determined by the regularity of the given physical constants $\kappa$, a decay of the Fourier coefficients of the (unknown) $u_\omega$ would be desirable but can generally not be assumed. Figure~\ref{Fig:localization} shows the norms of the Fourier coefficients of the eigenmodes with a fixed truncation $K=30$ and a sequence of increasingly refined space discretizations. For a coarse space discretization, which prohibits high spatial oscillations, the spectrum is strongly localized around $0$ and an exponential decay of the temporal Fourier coefficients is observed. This could be explained by the connection of spatial and temporal oscillations in Lemma~\ref{lem:energy-identity-with-excit}, although a complete understanding of this phenomenon remains elusive. As spatial discretization becomes more expressive, we observe an increasing amount of modes with stronger temporal oscillations, which corresponds to a weaker localization of the temporal spectrum near $0$. For those modes, we do not expect that the truncation of the system yields a satisfactory result, since the truncation \eqref{eq:weak-Floquet-freq-K} then introduces a defect of $\mathcal O(1)$. This implies a spectral CFL-type condition: For any given space discretization, we require a sufficiently large $K$, to resolve all present modes (whose existence and completeness is guaranteed by Floquet-theory, formulated in Lemma~\ref{lem:Floquet}) to reduce the truncation error of \eqref{eq:weak-Floquet-freq-K} to a satisfactory tolerance. On the other hand, we observe that low-frequency modes are present even for space discretizations that violate such a condition. Here, we expect a good agreement of the associated modes from the truncated system \eqref{eq:weak-Floquet-freq-K} with their continuous counterparts in \eqref{eq:weak-Floquet-freq} (under the assumption of existence). 
		%Modes with an arbitrary large range in frequencies, which correspond to time-dependent solutions of low regularity, are not expected to be resolved by the presented approach. In general, any treatment of such solutions seems challenging. Even a method fully formulated in the time-domain, such as a time integration scheme, effectively filters high-oscillations to a certain degree and can therefore not be expected to overcome this difficulty.
		\begin{figure}
			\centering
			%\hspace*{-4cm}
			%\vspace*{-0.5cm}
			%\hspace*{-2cm}
			\includegraphics[scale=0.5]{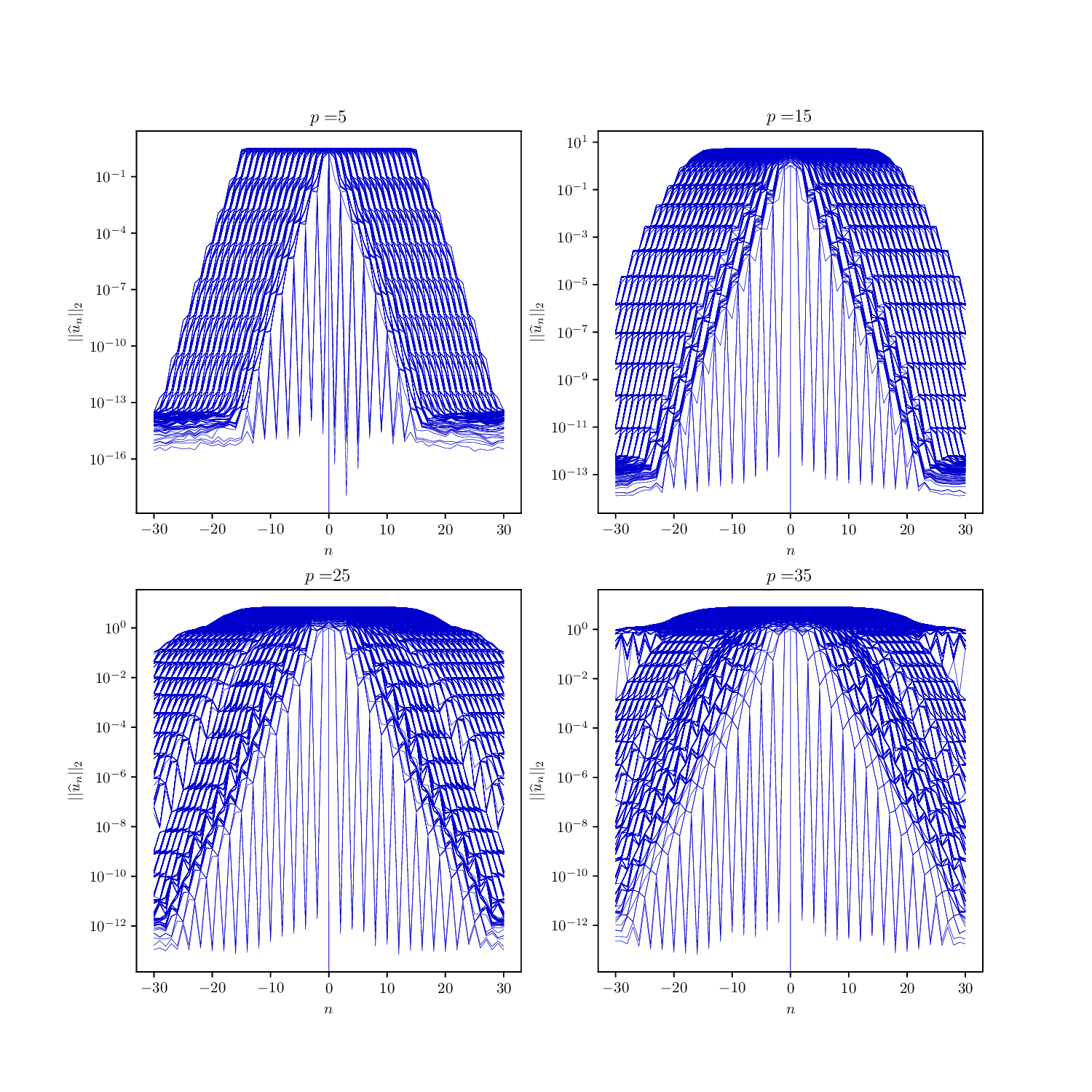}
			\vspace*{-1cm}
			\caption{The $L^2-$ norm of the Fourier coefficients of the truncated Floquet--Bloch solutions in \eqref{eq:weak-Floquet-freq-K}, computed for $\kappa(t)=\cos(\cos(2\pi t))$. For coarse space discretizations, the modes localize in the low-frequency regime. 
            %The localization is stronger than predicted in Theorem~\ref{thm:localization}, which may be caused by additional regularity inherent in the numerical experiment.
            }
			\label{Fig:localization}
		\end{figure}

		%	\begin{lemma}
			%		For all $t>0$, we obtain
			%		\begin{align*}
				%			\norm{\partial_t u (t)}^2_{L^2} +	\norm{\nabla u (t)}^2_{L^2} 
				%			&\le  e^{Ct} 	\left( \norm{\partial_t u (0)}^2_{L^2} + \frac{\eta(0)}{2}	\norm{\nabla u (0)}^2_{L^2} \right)
				%			+ e^{Ct}\int_0^t \norm{f(s)}^2_{L^2} \, \mathrm d s .
				%		\end{align*}
			%	\end{lemma}

		\appendix
		
		\section{Time-periodic acoustic wave propagation problems with time independent coefficients}\label{sect:time-independent}
        The spectrum visualized in  Figure~\ref{Fig:Discrete-spectrum} seems to fill up a complex contour (with a resolution that is determined by the space discretization). The following section gives a simple example where the set of Floquet \JN{exponents} densely fills out a complex contour. 
		\subsection{Explicit folding in one dimension}\label{sect:1d-folding}

		Consider the simplest setting of $D=[0,1]$, with homogeneous Neumann boundary conditions \eqref{eq:Neumann}. The solution is then in the span of the associated complete basis $\phi_k(x)=\cos(k\pi x)$. Then, the solution to the time-modulated acoustic wave equation has the form $\sum_{k=-\infty}^\infty\alpha_k(t)\phi_k(x)$, where the temporal coefficients fulfill, for $\kappa(x,t)=\kappa(t)$,
		\begin{align}\label{eq:1D-Fourier-coefficients}
			(-i\omega+\partial_t)^2\alpha_k(t)+\kappa(t)\pi^2k^2\alpha_k(t) = 0,
			\quad \forall |k|\le K.
		\end{align}
		Again, for the decoupled case $\kappa(t)=\kappa$, inserting a Fourier approach yields
		\begin{align*}
			\left((-i\omega-in\Omega)^2+\kappa\pi^2k^2\right)\widehat{\alpha}_k^n = 0,
			\quad \forall |k|\le K.
		\end{align*}
		Injectivity, for real $\omega$, is therefore equivalent to 
		\begin{align*}
			(\omega+n\Omega)\neq \sqrt{\kappa}\pi k, 
			\quad \forall k,n.
		\end{align*}
		For $\kappa=\Omega=1$, we observe that this condition is equivalent to
		\begin{align*}
			\omega\neq \pi k -n,
			\quad \forall k,n.
		\end{align*}
		The resonant quasi-frequencies are therefore, in most cases, a dense subset of $\mathbb T$. 
		
	\drop{	Moreover, we have the following connection between spatial and temporal frequencies. 
		
		\begin{lemma}
			For the Fourier coefficients in \eqref{eq:1D-Fourier-coefficients}, we have the following identity: For $\omega \in \mathbb R$, we have 
			\begin{align*}
				\int_0^T|\alpha'_k(t)|^2 
				\mathrm d t= (\pi^2k^2-\omega^2)\int_0^T\kappa(t)|\alpha_k(t)|^2 \mathrm d t.
			\end{align*}
		\end{lemma}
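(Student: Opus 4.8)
The plan is to prove the energy-type identity for the $k$-th Fourier coefficient $\alpha_k$ by testing the modal equation \eqref{eq:1D-Fourier-coefficients} against a suitable conjugate and integrating over one period, exploiting the periodicity to discard boundary contributions. Concretely, I would start from
\begin{align*}
	(-i\omega+\partial_t)^2\alpha_k(t)+\kappa(t)\pi^2k^2\alpha_k(t) = 0,
\end{align*}
and test it by multiplying with $\overline{\alpha_k(t)}$ and integrating $\int_0^T \cdot\, \mathrm d t$. The second term immediately produces $\pi^2k^2\int_0^T\kappa(t)|\alpha_k(t)|^2\,\mathrm d t$, which is the right-hand side up to the $\omega^2$ contribution. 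The task is then to expand $\int_0^T (-i\omega+\partial_t)^2\alpha_k\,\overline{\alpha_k}\,\mathrm d t$ into $\int_0^T|\alpha_k'|^2\,\mathrm d t$ plus terms proportional to $\omega$ and $\omega^2$, and to show that the cross terms in $\omega$ vanish when $\omega\in\mathbb R$.

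The key computational step is the integration by parts. Expanding the operator gives $(-i\omega+\partial_t)^2 = \partial_t^2 - 2i\omega\partial_t - \omega^2$. Integrating $\int_0^T \partial_t^2\alpha_k\,\overline{\alpha_k}\,\mathrm d t$ by parts and using the $T$-periodicity of $\alpha_k$ (so the boundary terms $[\alpha_k'\overline{\alpha_k}]_0^T$ vanish) yields $-\int_0^T|\alpha_k'|^2\,\mathrm d t$. For the first-order term, $-2i\omega\int_0^T\alpha_k'\,\overline{\alpha_k}\,\mathrm d t$, I would observe that $\int_0^T\alpha_k'\,\overline{\alpha_k}\,\mathrm d t$ is purely imaginary: integrating by parts once more and using periodicity shows $\int_0^T\alpha_k'\overline{\alpha_k}\,\mathrm d t = -\int_0^T\alpha_k\overline{\alpha_k'}\,\mathrm d t = -\overline{\int_0^T\alpha_k'\overline{\alpha_k}\,\mathrm d t}$, so this integral equals the negative of its own conjugate. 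Hence, for $\omega\in\mathbb R$, the term $-2i\omega\int_0^T\alpha_k'\overline{\alpha_k}\,\mathrm d t$ is real but, more importantly, I should track that it contributes a purely imaginary multiple of a real quantity; combining with the requirement that the whole identity be real forces it to cancel. Collecting terms then leaves $-\int_0^T|\alpha_k'|^2\,\mathrm d t - \omega^2\int_0^T|\alpha_k|^2\,\mathrm d t + \pi^2k^2\int_0^T\kappa|\alpha_k|^2\,\mathrm d t = 0$, which rearranges to the claimed identity, provided $\int_0^T|\alpha_k|^2\,\mathrm d t$ is related to $\int_0^T\kappa|\alpha_k|^2\,\mathrm d t$ appropriately.

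The main obstacle I anticipate is the apparent mismatch between the $\omega^2\int_0^T|\alpha_k|^2\,\mathrm d t$ term generated by the expansion and the $\omega^2\int_0^T\kappa|\alpha_k|^2\,\mathrm d t$ term that the stated identity requires on the right-hand side. Resolving this means the cleanest route is to take the real part of the entire tested equation rather than the equation itself: taking $\Re$ of $\int_0^T\big[(-i\omega+\partial_t)^2\alpha_k\big]\overline{\alpha_k}\,\mathrm d t + \pi^2k^2\int_0^T\kappa|\alpha_k|^2\,\mathrm d t = 0$ eliminates the odd-in-$\omega$ cross term (which is imaginary for real $\omega$) automatically, and the $\kappa$-weighted term on the right is already real. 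I would therefore organize the proof as: test, expand, take real parts, integrate by parts using periodicity to kill boundary terms, and finally rearrange. The delicate point to state carefully is precisely why the first-order $\omega$-term drops out — namely the skew-symmetry of the derivative pairing under periodic boundary conditions — and ensuring that the $\omega^2|\alpha_k|^2$ versus $\omega^2\kappa|\alpha_k|^2$ discrepancy is handled correctly, which suggests double-checking whether the intended identity should carry $\kappa$ in the $\omega^2$ term or whether an additional normalization on $\alpha_k$ is implicitly assumed.
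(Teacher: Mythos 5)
Your overall strategy (test \eqref{eq:1D-Fourier-coefficients} with $\overline{\alpha_k}$, integrate over one period, integrate by parts using periodicity) is the natural one, but the step on which your plan hinges fails: the first-order cross term does \emph{not} disappear when you take real parts. You correctly observe that $\int_0^T\alpha_k'\overline{\alpha_k}\,\mathrm dt$ is purely imaginary; but then $-2i\omega\int_0^T\alpha_k'\overline{\alpha_k}\,\mathrm dt$ is a purely imaginary coefficient times a purely imaginary number, hence \emph{real} for $\omega\in\mathbb R$ (it equals, up to sign, $2\omega\,\mathrm{Im}\int_0^T\alpha_k'\overline{\alpha_k}\,\mathrm dt$), and it survives taking real parts. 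Your write-up asserts both that this term ``is real'' and that it is ``imaginary for real $\omega$'' and therefore eliminated by taking real parts; only the first assertion is correct, and with it the cancellation you need is gone. What the computation actually yields (most cleanly via the skew-adjointness of $-i\omega+\partial_t$ on $T$-periodic functions) is
\begin{align*}
\int_0^T\big|(-i\omega+\partial_t)\alpha_k(t)\big|^2\,\mathrm dt=\pi^2k^2\int_0^T\kappa(t)|\alpha_k(t)|^2\,\mathrm dt,
\end{align*}
equivalently
\begin{align*}
\int_0^T|\alpha_k'|^2\,\mathrm dt=\pi^2k^2\int_0^T\kappa|\alpha_k|^2\,\mathrm dt-\omega^2\int_0^T|\alpha_k|^2\,\mathrm dt+2\omega\,\mathrm{Im}\int_0^T\alpha_k'\overline{\alpha_k}\,\mathrm dt,
\end{align*}
which differs from the stated identity both in the presence of the cross term and in the weight of the $\omega^2$ term (the mismatch you yourself flagged at the end but left unresolved).

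Your suspicion about the statement is in fact warranted. For constant $\kappa\equiv1$ and $\Omega=1$, the periodic modal solutions are $\alpha_k(t)=e^{-int}$ with $(\omega+n)^2=\pi^2k^2$; then $\int_0^T|\alpha_k'|^2\,\mathrm dt=n^2\int_0^T|\alpha_k|^2\,\mathrm dt$, whereas $(\pi^2k^2-\omega^2)\int_0^T\kappa|\alpha_k|^2\,\mathrm dt=(n^2+2\omega n)\int_0^T|\alpha_k|^2\,\mathrm dt$, so the printed identity fails whenever $\omega n\neq0$ (e.g.\ $k=1$, $n=3$, $\omega=\pi-3$, which lies in the first Brillouin zone). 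The paper supplies no proof of this lemma, so there is nothing to compare your argument against; but as it stands your derivation cannot be completed, because the identity it aims at does not follow from \eqref{eq:1D-Fourier-coefficients} without retaining the cross term --- the version that your method does prove is the one with left-hand side $\int_0^T|(-i\omega+\partial_t)\alpha_k|^2\,\mathrm dt$.
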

        }
		%\begin{proof}
		%Testing with $\overline{\alpha_k}$ and integrating yields
		%\begin{align*}
		%	\int_0^T\overline{\alpha_k}(t)(-i\omega+\partial_t)^2\alpha_k(t)\mathrm d t+\int_0^T\kappa(t)\pi^2k^2|\alpha_k(t)|^2 \mathrm d t = 0.
		%\end{align*}
		%Integrating the first summand by parts and using  the time-periodic conditions yields 
		%\begin{align*}
		%	\int_0^T\overline{(-i\omega+\partial_t)\alpha_k(t)}(-i\omega+\partial_t)\alpha_k(t)\mathrm d t = \int_0^T\kappa(t)\pi^2k^2|\alpha_k(t)|^2 \mathrm d t.
		%\end{align*}
		%The left-hand side can be rewritten to
		%\begin{align*}
		%	&\int_0^T\overline{(-i\omega+\partial_t)\alpha_k(t)}(-i\omega+\partial_t)\alpha_k(t)\mathrm d t 
		%	\\&=
		%		\int_0^T\omega^2|\alpha_k(t)|^2 	-\overline{i\omega \alpha_k(t)}\alpha'_k(t)-
		%	\overline{\alpha}'_k(t) i\omega\alpha_k(t)+ |\alpha'_k(t)|^2 
		%\mathrm d t
		%\\ & = 
		%	\int_0^T\omega^2|\alpha_k(t)|^2 +|\alpha'_k(t)|^2 
		%\mathrm d t .
		%\end{align*}
		%Inserting this identity above yields
		%\begin{align*}
		%	\int_0^T|\alpha'_k(t)|^2 
		%\mathrm d t= (\pi^2k^2-\omega^2)\int_0^T\kappa(t)|\alpha_k(t)|^2 \mathrm d t.
		%\end{align*}
		%%The real part of the mixed term vanishes, since
		%%\begin{align*}
		%%\normalfont{\Re} \overline{i\omega \alpha_k(t)}\alpha'_k(t)+
		%%\overline{\alpha}'_k(t) i\omega\alpha_k(t)
		%%&=
		%%\normalfont{ \Re}i \omega \partial_t|\alpha_k(t)|^2 = 0
		%%\end{align*}
		%\end{proof}
		Now consider the same setting with absorbing boundary conditions \eqref{eq:Absorbing}, again with $\kappa=\Omega=1$, instead of the previous Neumann boundary conditions \eqref{eq:Neumann}. Consider the problem for an interval, namely we consider the problem 
		\begin{align*}
			-k^2 \widehat u - \Delta \widehat u = 0 \quad \text{on} \quad [-1,1],
		\end{align*}
		with perfectly absorbing boundary conditions (which correspond to a coupling to an exterior Helmholtz problem with the material parameter $\kappa_0\in (0,1)$)
		\begin{align*}
			\dfrac{\mathrm d}{\mathrm d |x| } \widehat u = -ik \kappa_0\widehat u \quad \text{on} \quad \partial [-1,1] = \{-1,1\}.
		\end{align*}
		A weak formulation of this boundary value problem is given by
		\begin{align*}
			-k^2 \left(v,\widehat u\right)_{L^2}
			+ \left(\nabla v, \nabla \widehat u\right)_{L^2}
			+ ik\kappa_0 \left( v(-1)\widehat u(-1) -
			v(1)u(1)\right) = 0,
		\end{align*}
		We are interested in nontrivial solutions of this system. Any solution of the interior Helmholtz problem has the form 
		\begin{align*}
			\widehat u = c_1 e^{i k x} + c_2 e^{-ikx}.
		\end{align*}
		Taking the derivative of  this expression yields
		\begin{align*}
			\dfrac{\mathrm d}{\mathrm d x }    \widehat u
			= c_1 i k e^{ikx} + c_2(-ik) e^{-ikx}.
		\end{align*}
		The absorbing boundary conditions at both borders of the interval $[-1,1]$ therefore correspond to the matrix system 
		\begin{align*}
			\kappa\begin{pmatrix}
				e^{ik} & -e^{-ik} \\ 
				-e^{-ik} & e^{ik}
			\end{pmatrix}
			c = \kappa_0 \kappa
			\begin{pmatrix}
				e^{ik} & e^{-ik} \\ 
				e^{-ik} & e^{ik}
			\end{pmatrix} c,
		\end{align*}
		where $c\in \mathbb C^2$ collects the coefficients of the solution $\widehat u $. 
        
        \drop{We observe that for $\kappa=0$ (which corresponds to a constant resonant mode) we have a nontrivial kernel. For $\kappa\neq 0$ we obtain by rearranging this system
		\begin{align*}
			Ac \coloneqq \begin{pmatrix}
				(1-\kappa_0)e^{ik} & -(1+\kappa_0)e^{-ik} \\ 
				-(1+\kappa_0)e^{-ik} & (1-\kappa_0)e^{ik}
			\end{pmatrix}
			c = 0.
		\end{align*}
		We seek frequencies $k_j$, such that the above matrix has a nontrivial kernel. The determinant of the matrix reads 
		\begin{align*}
			\det(A) = (1-\kappa_0)^2 e^{ik} - (1+\kappa_0)^2 e^{-ik}.
		\end{align*}
		Setting this determinant to zero, rearranging and taking the square root on both sides gives, for a resonant frequency $k_{\text{res}}$, the identity
		\begin{align*}
			e^{ik^{\text{res}}} = \dfrac{1+\kappa_0}{1-\kappa_0}.
		\end{align*}
		We therefore obtain that the time-harmonic resonance frequencies are, for $\kappa_0\in(0,1)$, fully described by 
		\begin{align*}
			\widehat{\mathcal R} = \{0\} \cup \left\{ 2\pi k - i \log\left(\dfrac{1+\kappa_0}{1-\kappa_0}\right) \quad \bigg| \quad k \in \mathbb Z\right\}.
		\end{align*}
        }
		A standard computation now gives that the quasi-resonant frequencies in the time-periodic setting are given by the set
		\begin{align*}
			\mathcal R = \mathbb Z \,\cup \bigg\{ 2\pi k+ n - i \log\left(\dfrac{1+\kappa_0}{1-\kappa_0}\right) \,\, \bigg| \,\, k,n \in \mathbb Z\bigg\},
		\end{align*}
		which contains a dense subset of the complex line $\mathbb L = \big\{z \in \mathbb C \, | \, \Im z = -\log\big(\tfrac{1+\kappa_0}{1-\kappa_0}\big)\big\}$. We note that in the special case of $\kappa_0 = 1$, no quasi-resonant frequencies are present.

        \section{Improved estimates for Neumann b.c. with $H^1$ normalization}
        The localization result of Theorem~\ref{thm:localization} can, in the case of the Neumann problem, be slightly improved by using a stronger normalization. As a consequence, the corresponding results of Lemma~\ref{lem:big-U}, Proposition~\ref{prop:td-sol-differences} and of Theorem~\ref{thm:discrete-folding} hold, under this stronger normalization, with a lower power of the constant $C_{\mathrm{inv}}(h)$. The following theorem formulates and proves this statement.
        
        \begin{theorem}\label{thm:stronger-localization-neumann} Consider the problem formulation for the homogeneous Neumann boundary conditions, i.e. $\kappa_0=0$. Let $\kappa$ fulfill the assumptions \eqref{assumpt-kappa} and further let $\widehat{u}_{h,K}^{\omega} $ and $\omega_{h,K} $ be a resonant mode and its associated quasi-frequency, which fulfill the eigenvalue problem \eqref{eq:block-EV-h}. Let $\omega_{h,K} $ be in the first Brillouin zone, i.e. have real part in $(-\Omega/2,\Omega/2]$. Moreover, we assume that the mode is normalized in the sense that 
        $$\left\| \widehat{u}_{h,K}^{\omega}\right\|_{K,D}^2+\left\| \nabla\widehat{u}_{h,K}^{\omega}\right\|_{K,D}^2 = 1.$$ 
        Then, there exists a constant $C_0$ independent of $h$ and $K$ such that for
			\begin{align}\label{eq:cond-loc-ap}
				K > C_0C_{\mathrm{inv}}^2(h),
			\end{align}
			we have the following result.
            For $1<|n|\le K$, we obtain
			\begin{align}\label{eq:local-ap}
				\left\| \left( \widehat{u}_{h,K}^{\omega}\right)_n \right\|_{L^2(D)}	\le 
				C\dfrac{ C_{\mathrm{inv}}(h)}{ |n|^2}.
			\end{align}
			The constant $C$ depends on $D$, $T$, $\Omega$ and $\kappa$, but is crucially independent of $K$ and the space discretization (whose influence is fully captured in $C_{\mathrm{inv}}(h)$, defined in \eqref{eq:inverse-estimate}).
		\end{theorem}
		\begin{proof}
			Throughout this proof, and for the rest of the paper, $C$ denotes a generic constant with different values that does not depend on $K$ and $h$.
			
			Retracing the substitution used to obtain \eqref{eq:block-EV-h}, we recover the quadratic form of the spatially discrete formulation \eqref{eq:weak-Floquet-freq-quadratic}, which reads
			\begin{align*}
				\begin{aligned}
					&\left(\widehat{v}_h, (-i\omega_{h,K}  + \mathcal D)^2  \widehat{u}_{h,K}^{\omega} \right)_{K,D}
					+ \left(\nabla \widehat{v}_h,  \mathcal T_\kappa \nabla \widehat{u}_{h,K}^{\omega}\right)_{K,D} 	
					= 0
					,
				\end{aligned} 	
			\end{align*}
			for all $\widehat v_h \in \VhK$.
			We test the formulation with $\widehat{v}_h= (-i\omega_{h,K}  + \mathcal D)^2  \widehat{u}_{h,K}^{\omega} $ and apply the inverse estimate \eqref{eq:inverse-estimate} once, which gives
			\begin{align*}	
				0 &\ge	\left\|(-i\omega_{h,K}  + \mathcal D)^2  \widehat{u}_{h,K}^{\omega} \right\|^2_{K,D}
				- 	 C_{\text{inv}}(h) C_\kappa 	\left\|(-i\omega_{h,K}  + \mathcal D)^2  \widehat{u}_{h,K}^{\omega} \right\|_{K,D}	
				\left\| \nabla \widehat{u}_{h,K}^{\omega} \right\|_{K,D}
				.
			\end{align*}
			Here, we use the constant $C_\kappa$ of Lemma~\ref{lem:bound-a}. 
			Rearranging this inequality yields
			\begin{align*}	
				&\left\|(-i\omega_{h,K}  + \mathcal D)^2  \widehat{u}_{h,K}^{\omega}\right\|_{K,D}
                \le
				C_{\text{inv}}(h) C_\kappa 
				\left\|\nabla  \widehat{u}_{h,K}^{\omega} \right\|_{K,D}
				.
			\end{align*}
			
			We note that by Theorem~\ref{thm:truncated_Floquet-h}, we have $|\omega_{h,K} |\le C_{\kappa}'$. Since $\omega_{h,K} $ is in the first Brillouin zone, we obtain a constant $C$, such that for all $|n|>1$, we have, after squaring both sides of the above inequality and applying Young's inequality to the right-hand side
			\begin{align*}
				\|\mathcal D^2 \widehat{u}_{h,K}^{\omega} \|^2_{K,D}	\le 
				C \left( C^2_{\text{inv}}(h) +  \|\mathcal D \widehat{u}_{h,K}^{\omega} \|^2_{K,D} \right).
			\end{align*} 
			Finally, we write out the summands from the norm  $\left\| \cdot\right\|_{K,D}$ and pull all of them to the left-hand side, which gives 
			\begin{align}\label{eq:written-out-K-D-norm}
				\sum_{n=-K}^K \left( |n|^4	-C |n|^2 \right)\left\| \left( \widehat{u}_{h,K}^{\omega}\right)_n \right\|^2_{L^2(D)}	\le 
				C C^2_{\text{inv}}(h).
			\end{align} 
			We conclude with a proof by cases, where we separate the sum at $n_0= \lceil C_0\rceil$. If the negative terms on the left-hand side of \eqref{eq:written-out-K-D-norm} can be absorbed, i.e., we have
			\begin{align}\label{eq:case-1}
				\tfrac{1}{2}\sum_{|n|>n_0}|n|^4 \left\| \left( \widehat{u}_{h,K}^{\omega}\right)_n \right\|^2_{L^2(D)} 
				>C\sum_{n=-n_0}^{n_0}  |n|^2 \left\| \left( \widehat{u}_{h,K}^{\omega}\right)_n \right\|^2_{L^2(D)},
			\end{align} 
			then we have the result by absorption and estimating the left-hand side from below by a single summand. 
			
			If \eqref{eq:case-1} does not hold, then we have 
			% Under the assumed condition \eqref{eq:cond-n}, we can absorb the negative terms on the left-hand side and therefore obtain the stated result for an arbitrary $n$ 
			% We note that the low-frequency components of the sum are not critical, since for  $n_0= \lceil CC_{\mathrm{inv}}(h)\rceil$, since the bound
			\begin{align*}
				\sum_{|n|>n_0}|n|^4 \left\| \left( \widehat{u}_{h,K}^{\omega}\right)_n \right\|^2_{L^2(D)} 
				\le 2C\sum_{n=-n_0}^{n_0}  |n|^2 \left\| \left( \widehat{u}_{h,K}^{\omega}\right)_n \right\|^2_{L^2(D)}
				\le 4CC^2_{\text{inv}}(h)	,
			\end{align*} 
			which again implies the localization result \eqref{eq:local}.
		\end{proof}
        
        \drop{
		\subsection{Case study: a three-dimensional time-periodic acoustic scattering problem}\label{sect:time-periodic}
		As an intermediate step, we use the following time-periodic scattering problem. Let $f$ be a $T-$periodic: We seek $u\in H^1_{\text{per}}(0,T,H^1_{\text{loc}}(\mathbb R^3))$, such that
		\begin{alignat}{2}\label{eq:time-invariant-coefficients-1}
			\frac{\partial^2}{\partial t^2}  u+ A_0(x) u &= f, \quad && \text{in}\quad  D ,
			\\ 
			\frac{\partial^2}{\partial t^2}  u+ A_0(x) u  &= 0, \quad && \text{in}  \quad \mathbb R^d\setminus D .\label{eq:time-invariant-coefficients-2}
		\end{alignat}
		Along the interface $\partial D$, we enforce continuity of the traces of $u$, namely
		\begin{align}\label{eq:transmission-time}
			u \big|_+ = u \big|_-, \quad \partial_\nu u \big|_+ = \partial_\nu u \big|_-.
		\end{align}
		Here, $H^1_{\text{loc}}(\mathbb R^3)$ is the set of functions which are locally in $H^1$ and $\big|_\pm$ denotes the limits from outside and inside $D$, respectively. 
		The elliptic operator here reads
		$$A_0(x) u = -\nabla \cdot \left( \kappa(x)\nabla u \right), \quad\text{with} \quad \kappa(x)=\frac{c^2}{n^2(x)}. $$
		
		The right-hand side is itself assumed to be $T$-periodic and therefore fulfills the modulation 
		\begin{align*}
			f(x,t) =
			\sum_{n=-\infty}^{\infty} \widehat{f}_n(x) e^{-i n\Omega t}.
		\end{align*}
		Moreover, we assume that the right-hand side is compactly supported for all $t\in[0,T]$. The system \eqref{eq:weak-formulation-K} then decouples and the systems simplify, for all $n\in \mathbb Z$, to
		\begin{alignat*}{2}
			-(\omega+n\Omega)^2 \widehat u_n+ A_0(x) \widehat u_n &= \widehat f_n \quad && \text{in}\quad  D ,
			\\ 
			-(\omega+n\Omega)^2 \widehat u_n+ A_0(x) \widehat u_n  &= 0 \quad && \text{in}  \quad \mathbb R^d\setminus D .
		\end{alignat*}
		Along the interface $\Gamma = \partial D$ we enforce continuity of the Fourier coefficients, namely
		\begin{align*}
			\widehat{u}_n \big|_+ = \widehat u_n \big|_-, \quad \partial_\nu \widehat u_n \big|_+ = \partial_\nu \widehat u_n \big|_-.
		\end{align*}
		These transmission problems are, under assumptions on the geometry of $D$, well-posed for all real-valued wavenumbers $\omega$. Consequently, with \cite[Theorem 3.1]{MS19}, we obtain the following well-posedness result. 
		
		\begin{proposition}
			Let $f\in H^r_{\text{per}}(0,T,L^2)$ for some $r>2$ and further let $D$ be a star-shaped Lipschitz domain. Then, the solution to the time-dependent system is given by the convergent series 
			\begin{align*}
				u(x,t) = \sum_{n=-\infty}^\infty \widehat u_n(x) e^{-int\Omega}.
			\end{align*}
			Moreover, let $u_K$ denote the \JN{solution} of the truncated system \eqref{eq:weak-formulation-K}, in the time-invariant setting. Then, we have the estimate 
			\begin{align*}
				\|u - u_K \|_{L^2_{\text{per}}(0,T,H^1(\mathbb R^3\setminus\partial D) )} \le C K^{-r+1}.
			\end{align*}
			The constant $C$ depends on $f$ and the geometry of $D$.
		\end{proposition}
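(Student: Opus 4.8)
The plan is to exploit that the coefficient operator $A_0$ is time-independent, so that the time-periodic problem \eqref{eq:time-invariant-coefficients-1}--\eqref{eq:transmission-time} decouples completely across temporal Fourier modes. Writing $f=\sum_n \widehat f_n e^{-in\Omega t}$ and making the ansatz $u=\sum_n \widehat u_n e^{-in\Omega t}$, each coefficient $\widehat u_n$ must solve the Helmholtz transmission problem at the real wavenumber $k_n=\omega+n\Omega$ displayed in the appendix, together with the matching conditions on $\Gamma$ and the radiation condition encoded in the exterior equation. First I would invoke \cite[Theorem~3.1]{MS19}: for a star-shaped Lipschitz domain $D$ this transmission problem is well-posed for every real wavenumber and, crucially, comes with a \emph{wavenumber-explicit} stability bound of the form $\|\widehat u_n\|_{H^1(\mathbb R^3\setminus\partial D)}\le C(1+|k_n|)\|\widehat f_n\|_{L^2}$, with $C$ depending only on the geometry. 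This fixes each modal solution uniquely (the static mode $n=0$ is handled separately as an exterior Poisson problem, but is likewise uniquely solvable), so the formal series is the only candidate for the solution.

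The second step is to convert the $H^r$-regularity of $f$ into decay of the modal data. By definition $\|f\|_{H^r_{\mathrm{per}}(0,T;L^2)}^2=\sum_n (1+|n|)^{2r}\|\widehat f_n\|_{L^2}^2<\infty$, so writing $\|\widehat f_n\|_{L^2}=(1+|n|)^{-r}b_n$ we have $(b_n)\in\ell^2$. Combined with the modal bound and $|k_n|\le C(1+|n|)$, this gives $\|\widehat u_n\|_{H^1}\le C(1+|n|)^{1-r}b_n$. Since $r>2$, a single Cauchy--Schwarz estimate yields $\sum_n\|\widehat u_n\|_{H^1}<\infty$, so the series converges absolutely and uniformly in $t$ with values in $H^1(\mathbb R^3\setminus\partial D)$; term-by-term substitution into the weak form then verifies that its sum solves \eqref{eq:time-invariant-coefficients-1}--\eqref{eq:transmission-time}, establishing the first assertion.

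For the error estimate I would use that $u_K$ is exactly the partial sum over $|n|\le K$, since the truncated system \eqref{eq:weak-formulation-K} decouples identically in the time-invariant setting, so that $u-u_K=\sum_{|n|>K}\widehat u_n e^{-in\Omega t}$. Parseval in time then gives
\begin{align*}
\|u-u_K\|_{L^2_{\mathrm{per}}(0,T;H^1(\mathbb R^3\setminus\partial D))}^2
= \sum_{|n|>K}\|\widehat u_n\|_{H^1}^2
\le C\sum_{|n|>K}(1+|n|)^{2-2r}b_n^2 .
\end{align*}
Bounding $(1+|n|)^{2-2r}\le (1+K)^{2-2r}$ for $|n|>K$ (valid since $r>1$) and using $\sum_n b_n^2\le \|f\|_{H^r}^2$ produces $\|u-u_K\|\le C(1+K)^{1-r}$, which is the claimed rate $C K^{-r+1}$.

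The genuine difficulty, and the only non-routine ingredient, is the wavenumber-explicit stability bound feeding the first step: transmission problems can trap rays and thereby admit quasimodes that would destroy any polynomial-in-$k$ estimate, so such a bound is not automatic. The role of the star-shapedness hypothesis — and of the contrast/sign conditions built into \cite[Theorem~3.1]{MS19} — is precisely to exclude such trapping and to guarantee a bound with a single power of $|k_n|$, uniformly over the arithmetic progression $k_n=\omega+n\Omega$. Tracking that single power of $k$ through the tail estimate is exactly what yields the loss of one order relative to the regularity $r$, i.e.\ the exponent $-r+1$ rather than $-r$; a sharper weighted modal bound would improve the rate, but the conservative linear bound suffices and matches the statement.
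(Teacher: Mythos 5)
Your proof is correct and follows essentially the same route as the paper: decouple the time-invariant problem into Helmholtz transmission problems at the wavenumbers $k_n=n\Omega$, invoke the wavenumber-explicit stability bound of \cite[Theorem 3.1]{MS19} for star-shaped Lipschitz $D$, and convert the $H^r$-decay of the Fourier coefficients $\widehat f_n$ into modal decay of $\widehat u_n$, summing the tail via Parseval in time. The one discrepancy is the form of the cited bound: the paper quotes $\|\widehat u_n\|_{H^1(\mathbb R^3\setminus\partial D)}\le C\left(1+\tfrac{1}{|n|}\right)\|\widehat f_n\|_{L^2}$, i.e.\ uniform boundedness of the solution operator in $H^1$ for $|n|\ge 1$ (coming from the nontrapping estimate $k\|u\|_{L^2}+\|\nabla u\|_{L^2}\lesssim\|f\|_{L^2}$), whereas you assume linear growth $C(1+|k_n|)$ and attribute the exponent $-r+1$ to that loss --- your weaker assumption still delivers the stated rate $K^{-r+1}$, but with the bound as quoted in the paper one would in fact obtain the stronger rate $K^{-r}$.
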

		\begin{proof}
			The wave number explicit bound of the solution operator of the time-harmonic transmission problem \cite[Theorem 3.1]{MS19} yields, for all $n\in \mathbb Z$, the estimate 
			$$\|\widehat u_n \|_{H^1(\mathbb R^3\setminus\partial D)} \le C\left(1+\frac{1}{|n|}\right)\|\widehat f_n \|_{L^2(\mathbb R^3\setminus\partial D)}. $$
			The statement is now given by applying the decay in the Fourier coefficients of the right-hand side $f$.
		\end{proof}
		Moreover, the limiting amplitude principle implies the following result for the evolution problem associated to the temporal scattering problem.
		\begin{proposition}
			We assume $f\in H^r_{\text{per}}(0,T,L^2(D))$ for $r>2$ and let $D$ and $\kappa$ fulfill some nontrapping and regularity assumptions (see \cite[Assumptions~1.1--1.3]{AGPP24}). Let $u_{\normalfont{\text{init}}}(x,t)\in H^1(\mathbb R^3\setminus \Gamma)$ be the solution to the initial value problem associated to \eqref{eq:time-invariant-coefficients-1}--\eqref{eq:transmission-time}, with vanishing initial conditions 
			$$u_{\normalfont{\text{init}}}(x,0) = 0, \quad \partial_tu_{\normalfont{\text{init}}}(x,0) = 0.$$ 
			For $t \rightarrow \infty$, we have the estimate
			\begin{align*}
				\left\| u_{\normalfont{\text{init}}}(x,t) - u(x,t) \right\|_{H^1(\mathbb R^3\setminus\partial D) } \le \frac{C}{(1+t^2)^{1/2}}.
			\end{align*}
			The constant $C$ depends on regularity of $f$, the coefficients $\kappa_0(x)$ and the geometry of $D$.
		\end{proposition}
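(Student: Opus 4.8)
The plan is to use linearity: decompose the $T$-periodic forcing into Fourier modes, reduce the limiting amplitude statement to a single-frequency limiting amplitude principle for each mode, and then recombine. First I would write $f(x,t)=\sum_{n}\widehat f_n(x)e^{-in\Omega t}$ and, by superposition, represent the solution of the evolution problem with vanishing data as $u_{\text{init}}=\sum_n v_n$, where $v_n$ solves the evolution problem \eqref{eq:time-invariant-coefficients-1}--\eqref{eq:transmission-time} driven by the single mode $\widehat f_n e^{-in\Omega t}$ with zero initial data. The time-periodic solution is the series $u=\sum_n \widehat u_n e^{-in\Omega t}$ constructed above, each $\widehat u_n$ being the outgoing solution of the Helmholtz transmission problem at wavenumber $n\Omega$. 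Subtracting, the transient $w_n := v_n - \widehat u_n e^{-in\Omega t}$ solves the \emph{homogeneous} equation with initial data $(-\widehat u_n,\, in\Omega\,\widehat u_n)$, so that $u_{\text{init}}-u=\sum_n w_n$ and the claim reduces to controlling each $w_n$ and summing.

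For the single-frequency transients I would invoke the limiting amplitude principle of \cite{AGPP24}, whose nontrapping and regularity hypotheses are exactly the Assumptions~1.1--1.3 cited in the statement. This provides, for each fixed frequency $n\Omega$, a bound of the form
\begin{align*}
\| w_n(t) \|_{H^1(\mathbb R^3\setminus\partial D)} \le \frac{C(n)}{(1+t^2)^{1/2}},
\end{align*}
with a constant $C(n)$ governed by $\|\widehat f_n\|_{L^2(D)}$, by the energy of the initial data $(-\widehat u_n, in\Omega\,\widehat u_n)$, and by the frequency-explicit resolvent bound at $n\Omega$. The velocity component of the data carries an explicit factor $n\Omega$, and under the nontrapping hypothesis the associated limiting absorption resolvent grows only \emph{polynomially} in the frequency; together these give $C(n)\le C\,(1+|n|)^{\alpha}\,\|\widehat f_n\|_{L^2(D)}$ for a fixed exponent $\alpha$. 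This polynomial control is precisely what rules out the exponential blow-up that trapping would otherwise produce, and it keeps the decay rate $(1+t^2)^{-1/2}$ uniform in $n$.

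It then remains to sum the modes. The triangle inequality and the per-mode bound give
\begin{align*}
\| u_{\text{init}}(t)-u(t) \|_{H^1(\mathbb R^3\setminus\partial D)} \le \frac{1}{(1+t^2)^{1/2}} \sum_{n\in\mathbb Z} C\,(1+|n|)^{\alpha}\,\|\widehat f_n\|_{L^2(D)},
\end{align*}
and a Cauchy--Schwarz estimate splits the series into $\big(\sum_n (1+|n|)^{2\alpha-2r}\big)^{1/2}\big(\sum_n (1+|n|)^{2r}\|\widehat f_n\|_{L^2(D)}^2\big)^{1/2}$. The second factor is a constant multiple of $\|f\|_{H^r_{\text{per}}(0,T;L^2(D))}$ and is therefore finite, while the first converges as soon as $2r-2\alpha>1$; the assumption $r>2$ guarantees this once the resolvent exponent $\alpha$ is pinned down (with $\alpha=1$ already covering the contribution of the velocity factor), and it yields the stated constant $C$ depending only on $f$, $\kappa_0$ and the geometry of $D$.

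The hard part will be the frequency-explicit single-frequency estimate behind $C(n)\le C(1+|n|)^{\alpha}\|\widehat f_n\|_{L^2(D)}$: one must trace the frequency dependence through the limiting absorption resolvent $R(n\Omega+i0)$ and the Duhamel representation of $v_n$, checking that the nontrapping bound of \cite{AGPP24} is uniform up to a polynomial loss in $n$ and that the convergence rate $(1+t^2)^{-1/2}$ does not degrade with the frequency. Once this quantitative, frequency-resolved version of the limiting amplitude principle is available, the superposition and the summation above are routine.
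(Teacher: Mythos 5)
Your proposal is correct and follows essentially the same route as the paper: the paper likewise reduces to the single-frequency limiting amplitude principle of \cite[Theorem 1.4]{AGPP24}, notes that the constant there grows linearly in the modulus of the wave number (your $\alpha=1$), and sums the modes using the temporal regularity $r>2$ of $f$. Your write-up merely makes explicit the superposition, the transient equation for $w_n$, and the Cauchy--Schwarz summation that the paper leaves implicit.
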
  
		\begin{proof}
			The statement is the consequence of the time-harmonic result, namely \cite[Theorem 1.4]{AGPP24}. The bound given there depends linearly on the absolute modulus of the wave number, which yields the result under the stated additional temporal regularity assumptions on $f$.
		\end{proof}
		
		\subsection{Time-harmonic formulation of the modulated setting}
		We consider the Sturm--Liouville problem associated to the modulation, i.e., the problem of finding $\mu_n\in \mathbb C$ such that there exists a nontrivial, $T$-periodic $p_n(t)$ satisfying
		\begin{align}\label{eq:SL}
			-\partial_t^2 p_n(t) = \mu_n \kappa(t) p_n(t).
		\end{align}
		The following lemma characterizes the solutions to \eqref{eq:SL}.
		\begin{lemma}
			There exists an orthonormal basis of $H^1_{\text{per}}(0,T)$, whose elements $(p_n)_{n\in \mathbb N}$ solve \eqref{eq:SL}. The corresponding eigenvalues $(\mu_n)_{n\in\mathbb N}$ are real, positive and ordered monotonically increasing 
			\begin{align*}
				0 = \mu_0 < \mu_1 \le \mu_2 \le \dots
			\end{align*}
			These eigenvalues grow asymptotically quadratic in their index, namely there exists constants $c_\mu <C_\mu$, such that for all $n\in\mathbb N$ we have
			\begin{align*}
				c_\mu n^2 \le \mu_n \le C_\mu n^2.
			\end{align*}
		\end{lemma}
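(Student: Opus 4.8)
The plan is to recognize \eqref{eq:SL} as a self-adjoint generalized eigenvalue problem with the bounded, sign-definite weight $\kappa$, to solve it through the spectral theorem for compact self-adjoint operators, and then to extract the quadratic asymptotics from a min--max comparison with the constant-coefficient case $\kappa\equiv 1$.

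First I would set up the weak formulation: seek $p\in H^1_{\mathrm{per}}(0,T)$ and $\mu\in\mathbb C$ with $a(p,q)=\mu\,(p,q)_\kappa$ for all $q\in H^1_{\mathrm{per}}(0,T)$, where $a(p,q)=\int_0^T p'\overline{q'}\,\mathrm dt$ and $(p,q)_\kappa=\int_0^T\kappa\,p\overline{q}\,\mathrm dt$. By \eqref{assumpt-kappa} we have $0<c_\kappa\le\kappa\le C_\kappa$, so $(\cdot,\cdot)_\kappa$ is an inner product equivalent to the standard one, turning $L^2(0,T)$ into a Hilbert space $L^2_\kappa$. To obtain an invertible auxiliary problem I would shift by the weight and work with the Hermitian form $\tilde a(p,q)=a(p,q)+(p,q)_\kappa$, which is coercive on $H^1_{\mathrm{per}}(0,T)$ since $\tilde a(p,p)\ge\int_0^T|p'|^2+c_\kappa\int_0^T|p|^2$ controls the full $H^1$-norm. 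Lax--Milgram then yields a bounded solution operator $f\mapsto p$ defined by $\tilde a(p,q)=(f,q)_\kappa$, and precomposing with the compact Rellich embedding $H^1_{\mathrm{per}}(0,T)\hookrightarrow L^2_\kappa$ produces a compact operator $\mathcal K$ on $L^2_\kappa$. A direct computation exploiting the symmetry of $\tilde a$ shows that $\mathcal K$ is self-adjoint and positive with respect to $(\cdot,\cdot)_\kappa$, and injective, so the spectral theorem supplies a complete orthonormal system of eigenfunctions $(p_n)$ with eigenvalues $\tilde\lambda_n\downarrow 0$. Unwinding the shift, each $p_n$ solves \eqref{eq:SL} with $\mu_n=\tilde\lambda_n^{-1}-1$, and the bound $\tilde a(p,p)\ge(p,p)_\kappa$ forces $\tilde\lambda_n\le 1$, whence all $\mu_n\ge 0$ while $\mu_n\to\infty$.

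Next I would read off the qualitative statements from the Rayleigh quotient $\mu_n=a(p_n,p_n)/(p_n,p_n)_\kappa$: this is real and nonnegative, and vanishes exactly when $p_n'=0$, i.e. on the one-dimensional space of constants. Hence the smallest eigenvalue is $\mu_0=0$ with constant eigenfunction, this eigenvalue is simple, and all remaining $\mu_n$ are strictly positive, which after ordering gives $0=\mu_0<\mu_1\le\mu_2\le\cdots$. The eigenfunctions are simultaneously orthogonal in $\tilde a$, so that, suitably normalized, they furnish the claimed orthonormal basis of $H^1_{\mathrm{per}}(0,T)$.

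Finally, for the two-sided quadratic bound I would invoke the Courant--Fischer min--max characterization
\begin{align*}
\mu_n=\min_{\substack{E\subset H^1_{\mathrm{per}}(0,T)\\ \dim E=n+1}}\ \max_{0\ne p\in E}\ \frac{\int_0^T|p'|^2\,\mathrm dt}{\int_0^T\kappa\,|p|^2\,\mathrm dt}.
\end{align*}
Sandwiching the weight, $\tfrac{1}{C_\kappa}\le\bigl(\int_0^T\kappa|p|^2\bigr)^{-1}\int_0^T|p|^2\le\tfrac{1}{c_\kappa}$, and using monotonicity of the min--max value gives $C_\kappa^{-1}\nu_n\le\mu_n\le c_\kappa^{-1}\nu_n$, where $\nu_n$ are the eigenvalues of $-\partial_t^2$ with periodic boundary conditions. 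These are explicit: $\nu_0=0$ and $\nu_{2k-1}=\nu_{2k}=(k\Omega)^2$ for $k\ge 1$ with $\Omega=2\pi/T$, so that $\nu_n\asymp n^2$. Combining the comparison with these explicit values yields constants $c_\mu$ and $C_\mu$ with $c_\mu n^2\le\mu_n\le C_\mu n^2$. The only genuinely delicate point is the bookkeeping in this last step: one must match the index $n$ of the min--max ordering with the double multiplicity of the constant-coefficient eigenvalues and the simple zero mode so that the comparison delivers clean powers of $n$; the remaining arguments are the standard Sturm--Liouville machinery.
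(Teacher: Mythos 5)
The paper states this lemma without any proof --- it is quoted as a standard fact of periodic Sturm--Liouville theory --- so there is no argument of the authors to compare against. Your proof is correct and is the canonical way to establish the result: the shifted coercive form $\tilde a$, Lax--Milgram plus the compact Rellich embedding to obtain a compact, injective, positive self-adjoint operator on $L^2_\kappa$, the spectral theorem, the Rayleigh quotient to identify the simple zero eigenvalue with constant eigenfunction, and the Courant--Fischer comparison $C_\kappa^{-1}\nu_n\le\mu_n\le c_\kappa^{-1}\nu_n$ with the explicit periodic Laplacian eigenvalues $\nu_{2k-1}=\nu_{2k}=(k\Omega)^2$ to get the two-sided quadratic growth. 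The details all check out: self-adjointness of $\mathcal K$ follows from the Hermitian symmetry of $\tilde a$, injectivity from density of $H^1_{\mathrm{per}}$ in $L^2_\kappa$, the relation $\mu_n=\tilde\lambda_n^{-1}-1\ge 0$ from $\tilde a(p,p)\ge(p,p)_\kappa$, and your index bookkeeping $\nu_n\asymp n^2$ is right since $\nu_n=(\lceil n/2\rceil\,\Omega)^2$ for $n\ge1$. Two minor points you could make explicit: first, the lemma's phrase ``orthonormal basis of $H^1_{\mathrm{per}}(0,T)$'' should be read with respect to the equivalent inner product $\tilde a$ (the eigenfunctions are orthogonal in both $(\cdot,\cdot)_\kappa$ and $a(\cdot,\cdot)$, hence in $\tilde a$, but not in the standard $H^1$ inner product unless $\kappa$ is constant), and density of the eigenspan in $H^1_{\mathrm{per}}$ follows from density of the range of $\mathcal K$ under $\tilde a$; second, reality of \emph{all} periodic eigenvalues (not just those produced by the spectral theorem) follows from testing the weak form with $p$ itself, since $a(p,p)=\mu(p,p)_\kappa$ with both quadratic forms real and the latter positive. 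Neither gap is substantive.
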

		
		The functions $p_m$ have similar properties as the standard Fourier basis. 
		\begin{lemma}
			Let $f\in H^k_{\text{per}}(0,T)$ and $\eta(t)\in H^k_{\text{per}}(0,T)$. Then, we have
			\begin{align*}
				\left( p_n ,f \right)_{L^2}\le C n^{-k}.
			\end{align*}
		\end{lemma}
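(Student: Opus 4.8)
The plan is to exploit the eigenvalue equation \eqref{eq:SL} to move derivatives off the eigenfunction $p_n$ and onto $f$, each transfer producing a factor $\mu_n^{-1}$, and then to invoke the quadratic lower bound $\mu_n \ge c_\mu n^2$ from the preceding lemma. (I note in passing that the function $\eta$ named in the statement does not enter the conclusion and can be discarded.) Since the stated pairing is the unweighted $(\cdot,\cdot)_{L^2}$, I first absorb the weight by writing $\phi = f/\kappa$, which is licit because $\kappa \ge c_\kappa > 0$ by \eqref{assumpt-kappa}.

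The basic step is a double integration by parts. For $f \in H^2_{\mathrm{per}}$, using $-p_n'' = \mu_n \kappa p_n$ and the vanishing of the periodic boundary terms,
\[
(p_n,f)_{L^2} = \int_0^T \kappa\, p_n\, \overline{\phi}\, \mathrm d t = -\frac{1}{\mu_n}\int_0^T p_n''\, \overline{\phi}\, \mathrm d t = -\frac{1}{\mu_n}\int_0^T p_n\, \overline{\phi''}\, \mathrm d t = -\frac{1}{\mu_n}\, (p_n, Tf)_{L^2},
\]
where $Tf := (f/\kappa)''$. By the quotient and Leibniz rules (again using $\kappa \ge c_\kappa$), the operator $T$ maps $H^j_{\mathrm{per}}$ boundedly into $H^{j-2}_{\mathrm{per}}$ with a norm depending only on $\kappa$.

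Next I would iterate this identity. For even $k = 2m$, $m$ applications give $(p_n,f)_{L^2} = (-\mu_n)^{-m}(p_n, T^m f)_{L^2}$ with $T^m f \in L^2$, and Cauchy--Schwarz together with the uniform bound $\|p_n\|_{L^2}\le C$ (from the normalization of the $p_n$) yields $|(p_n,f)_{L^2}| \le C\mu_n^{-m}\|f\|_{H^k} \le C n^{-k}$. For odd $k = 2m+1$ I would iterate $m$ times, reaching $T^m f \in H^1_{\mathrm{per}}$, and then perform one further integration by parts keeping a single derivative on $p_n$:
\[
(p_n, T^m f)_{L^2} = -\frac{1}{\mu_n}\int_0^T \frac{p_n''}{\kappa}\, \overline{T^m f}\, \mathrm d t = \frac{1}{\mu_n}\int_0^T p_n'\, \overline{\left(\frac{T^m f}{\kappa}\right)'}\, \mathrm d t.
\]
Here I use the a priori bound $\|p_n'\|_{L^2} \le C\sqrt{\mu_n}$, obtained by testing \eqref{eq:SL} with $p_n$ so that $\|p_n'\|_{L^2}^2 = \mu_n \int_0^T \kappa |p_n|^2\,\mathrm d t$; combined with $\mu_n \ge c_\mu n^2$ this contributes the remaining factor $n^{-1}$, giving $|(p_n,f)_{L^2}| \le C n^{-(2m+1)} = C n^{-k}$.

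The main obstacle is one of hypotheses rather than technique: repeated application of $T$ requires $\kappa$ to be differentiable up to order $k$, whereas the bare assumption \eqref{assumpt-kappa} only provides $\kappa \in C^1_{\mathrm{per}}$. The statement should therefore be understood with the tacit regularity $\kappa \in C^k_{\mathrm{per}}(0,T)$ (equivalently $W^{k,\infty}$), under which $f/\kappa$ and its derivatives stay in the required spaces and the constants in the bounds on $T^m$ are controlled. With that in place, the only remaining care is the routine bookkeeping of complex conjugates and the verification that all boundary contributions vanish by periodicity; if instead the weighted pairing $\int_0^T \kappa\, p_n \overline f$ were intended, the preliminary substitution $\phi = f/\kappa$ becomes unnecessary and the argument simplifies accordingly.
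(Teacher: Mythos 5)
The paper states this lemma without any proof, so there is nothing to compare your argument against; judged on its own, your proof is correct and is the natural one: repeatedly substituting $p_n = -(\mu_n\kappa)^{-1}p_n''$ and integrating by parts over a period transfers two derivatives onto $f/\kappa$ per step at the cost of a factor $\mu_n^{-1}$, and the lower bound $\mu_n\ge c_\mu n^2$ from the preceding lemma converts this into the stated decay, with the half-step via $\|p_n'\|_{L^2}\le C\sqrt{\mu_n}$ handling odd $k$. Your two side observations are also accurate: the hypothesis on $\eta$ is vacuous as the lemma is written, and the argument genuinely needs $\kappa\in W^{k,\infty}_{\mathrm{per}}$ (or the weighted pairing $\int_0^T\kappa\,p_n\overline f$, which saves differentiating $1/\kappa$ only in the first step), which exceeds the standing assumption \eqref{assumpt-kappa}; this is a gap in the lemma's stated hypotheses rather than in your proof, and flagging it explicitly is the right call. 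The only points worth tightening are that the conclusion should read $|(p_n,f)_{L^2}|\le Cn^{-k}$ and that the case $n=0$ (where $\mu_0=0$) must be excluded or treated trivially.
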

        
		As an intermediate step, we use the following time-periodic scattering problem. Let $f$ be a $T-$periodic: We seek $u\in H^1_{\text{per}}(0,T,H^2(\mathbb R^3))$, such that
		\begin{alignat}{2}\label{eq:time-modulated-coefficients-1}
			\frac{\partial^2}{\partial t^2}  u+ A(x,t) u &= f \quad && \text{in}\quad  D ,
			\\ 
			\rho_0\frac{\partial^2}{\partial t^2}  u+ A_0(x) u  &= 0 \quad && \text{in}  \quad \mathbb R^d\setminus D .\label{eq:time-invariant-coefficients-2}
		\end{alignat}
		Along the interface $\partial D$, we enforce continuity of the traces of $u$, namely
		\begin{align}\label{eq:transmission-time}
			u \big|_+ =  u \big |_-, \quad \partial_\nu u \big|_+= \partial_\nu u \big|_-.
		\end{align}
		The elliptic operator here reads
		$$A(x,t) u = -\nabla \cdot \left( \kappa_1(t)\kappa_2(x)\nabla u \right), \quad\text{with} \quad \kappa(x)=\frac{c^2}{n^2(x)}$$
		and
		$$A_0(t) u = -\nabla \cdot \left(\kappa_2(x)\nabla u \right), \quad\text{with} \quad \kappa_2(x)=\frac{c^2}{n^2(x)} . $$
		We seek a solution \JN{$u\in H_{\text{per}}^1(0,T;H^1(D))$}, which is equivalent to finding $\widehat u_n \in H^1\JN{(D)}$ such that 
		\begin{align*}
			u(x,t) =
			\sum_{n=0}^\infty \widehat u_n(x) p_n(t).
		\end{align*}
		The right-hand side is assumed to fulfill the same expansion, i.e.,
		\begin{align*}
			f(x,t) \kappa(t) =
			\sum_{n=0}^\infty \widehat{f}_n(x) p_n(t).
		\end{align*}
		The coupled systems therefore read inside of the scatterer
		\begin{alignat*}{2}
			-\mu_n \widehat u_n+ A_0(x) \widehat u_n &= \widehat f_n, \quad && \text{in}\quad  D,
		\end{alignat*}
		and in the exterior domain 
		\begin{align*}
			-\rho_0 \mu_n \widehat u_n+ A_0(x) \widehat u_n  &= 0, \quad \text{in}  \quad \mathbb R^d\setminus D,
		\end{align*}
		with some $\rho_0$ corresponding to the physical parameter of the background medium. We have again arrived at a setting similar to the one  studied in the previous sections. 
\subsection{On modulations with small magnitude}\label{sect:small-modulations}
		The previously used techniques, both in this manuscript and in the literature, do not establish a general well-posedness theory of the coupled system. An explicit relation to the well-posed initial-value problem is even more out of reach. Motivated by these difficulties, we consider the case of a modulation with \emph{small amplitude}, namely the parameter
		\begin{align*}
			\kappa_\varepsilon(t) = \kappa_r+\varepsilon \kappa_{\text{per}}(t).
		\end{align*}
		As $\varepsilon\rightarrow 0$, we expect to recover fundamental properties of the unmodulated case. For simplicity and the sake of presentation, we restrict our attention to the case of the Neumann boundary condition. 
		By dividing through the modulation on both sides, we obtain that the coupled harmonics can also be formulated as
		\begin{align*}
			a_\omega(\wu, \wv ) = - \left(\widehat v ,  T^\varepsilon D^2_\omega \widehat u\right)_K + \left(\widehat v,A \widehat u \right)_{K,D}, \quad \quad l_K(\widehat v ) = \big(\widehat v, \widehat b \big)_{K,D},
		\end{align*}
		where the Toeplitz operator $T^\varepsilon$ can be decomposed into
		\begin{align*}
			T^\varepsilon
			&= \bar{\kappa} I + \varepsilon E,
		\end{align*}
		with $\bar{\kappa}=\int_0^T\kappa(t)\mathrm d t$ being the diagonal element of $T^\varepsilon$. The Floquet exponents are the poles of the resolvent $R(\omega): \VK\rightarrow \VK$ defined by
		\begin{align*}
			R(\omega) = (-T^\varepsilon D^2_\omega + D_A)^{-1} = (-\bar{\kappa}D^2_\omega + D_A- \varepsilon E D^2_\omega)^{-1}.
		\end{align*}
		The first summand corresponds to a decoupled system of Helmholtz equation, which is stored in the matrix 
		\begin{align*}
			R^{\bar{\kappa}}_\omega= \left(-\bar{\kappa} D^2_\omega + D_A\right)^{-1}.
		\end{align*}
		This operator is the resolvent of the truncated time-periodic system with the time invariant physical parameter $\widehat{\kappa}$. With this notation, we can rewrite the time-modulated resolvent as 
		\begin{align*}
			R(\omega) =  R_\omega^{\bar{\kappa}}(I- \varepsilon E D^2_\omega R_{\bar{\kappa}})^{-1} 
			= 
			R^{\bar{\kappa}}_\omega	\sum_{n=0}^\infty \varepsilon^n \left(E D^2_\omega R^{\bar{\kappa}}_\omega\right)^n.
		\end{align*}
		The series of the right-hand side converges, for small enough $\varepsilon$, as long as $R^{\bar{\kappa}}_\omega$ does not have a pole at $\omega$. Moreover, we observe that the leading order term of this expansion is the resolvent $R_{\bar{\kappa}}$ corresponding to the unmodulated setting.
		In this case, the power series converges for  $$\varepsilon <\frac{1}{(K+|\omega|)^2 \left\| E \right\| \left\| R^{\bar{\kappa}}_\omega\right\|}.$$ 
		
		For $\varepsilon$ small enough, we moreover find that
		\begin{align*}
			R(\omega) =  
			R^{\bar{\kappa}}_\omega	+
			\varepsilon \left(E D^2_\omega R^{\bar{\kappa}}_\omega\right) +\mathcal O(\varepsilon^2)
			=
			(I+
			\varepsilon  E D^2_\omega) R^{\bar{\kappa}}_\omega  +\mathcal O(\varepsilon^2).
		\end{align*}

		\subsection*{The spatially discrete case}
		We now consider the case, when the operator $A:V\rightarrow V'$ has been discretized by some operator $A_h : V_h \rightarrow V_h,$ which is $h^{-2}$, as is naturally the case for a finite element discretization in $\mathbb R^d$.
		We consider the system
		\begin{align*}
			a^h_\omega(\wu_h, \wv_h ) = - \left(\widehat v_h ,   D^2_\omega \widehat u_h\right)_K + \left(\widehat v_h,T^\varepsilon A_h \widehat u_h \right)_K, \quad \quad l_K(\widehat v ) = \left(\widehat v_h, \widehat f \right)_K.
		\end{align*}
		The resolvent now reads
		\begin{align*}
			R_h(\omega) = (-D^2_\omega + T^\varepsilon D^h_A)^{-1} = (-D^2_\omega +  \bar{\kappa} D^h_A +\varepsilon E D^h_A)^{-1}.
		\end{align*}
		Again, we can write an effective resolvent
		\begin{align*}
			R^{\bar{\kappa}}_{h,\omega}= \left(-D^2_\omega +  \bar{\kappa} D^h_A\right)^{-1}.
		\end{align*}
		With this effective resolvent, we can write the modulated resolvent, for $\varepsilon$ small enough, as
		\begin{align*}
			R_h(\omega) =  R^{\bar{\kappa}}_{h,\omega}(I+\varepsilon E D^h_AR^{\bar{\kappa}}_{h,\omega})^{-1} 
			= 
			R^{\bar{\kappa}}_{h,\omega}\sum_{n=0}^\infty \varepsilon^n \left(E D^h_AR^{\bar{\kappa}}_{h,\omega}\right)^n.
		\end{align*}
        
		}

		\bibliographystyle{abbrv}
		\bibliography{Lit}
		
	\end{document}